\newtheorem{theorem}{Theorem}
\newtheorem{lemma}[theorem]{Lemma}
\newtheorem{corollary}[theorem]{Corollary}
\newtheorem{proposition}[theorem]{Proposition}
\newtheorem*{remark}{Remark}
\theoremstyle{remark}
\theoremstyle{definition}
\numberwithin{theorem}{section} \numberwithin{equation}{section}
\setlist[enumerate]{leftmargin=*,label=\rm{(\arabic*)}}
\setlist[itemize]{leftmargin=*}
\newcommand{\ord}{\text {\rm ord}}
\newcommand{\Li}{\mathrm{Li}}
\newcommand{\R}{\mathbb{R}}
\newcommand{\C}{\mathbb{C}}
\newcommand{\Z}{\mathbb{Z}}
\newcommand{\N}{\mathbb{N}}
\newcommand{\SL}{{\text {\rm SL}}}
\newcommand{\PSL}{{\text {\rm PSL}}}
\newcommand{\re}{\mathrm{Re}}
\newcommand{\im}{\mathrm{Im}}
\newcommand{\Log}{\mathrm{Log}}
\newcommand{\hyp}{\mathrm{hyp}}
\newcommand{\vol}{\mathrm{vol}}
\def\H{\mathbb{H}}
\renewcommand{\pmod}[1]{\ \, \left( \mathrm{mod} \, #1 \right)}
\newcommand{\zf}{\mathfrak{z}}
\author{Kathrin Bringmann}
\address{Department of Mathematics and Computer Science, Division of Mathematics, University of Cologne, Weyertal 86--90, 50931 Cologne, Germany}
\email{kbringma@math.uni-koeln.de}
\author{Jay Jorgenson}
\address{Department of Mathematics, The City College of New York, Convent Avenue at 138th Street, New York, NY 10031, USA}
\email{jjorgenson@mindspring.com}
\author{Lejla Smajlovi\'{c}}
\address{Department of Mathematics, University of Sarajevo, Zmaja od Bosne 35, 71 000 Sarajevo, Bosnia and Herzegovina}
\email{lejlas@pmf.unsa.ba}
\begin{document}

\title{On a generating function of Niebur--Poincar\'e series}
\date{\today}

\begin{abstract}
Let $\Gamma\subset \PSL_2(\R)$ be a Fuchsian group of the first kind which has a cusp $i\infty$ of width one.
In this paper, we first consider a generating function formed with the Niebur--Poincar\'e series
$\{F_{m,s}(\tau)\}_{m\ge 1}$ associated to $i\infty$.  
We prove a relation between the continuation of this generating function to $s=1$
with the resolvent kernel associated to the hyperbolic
Laplacian and the non-holomorphic Eisenstein series associated to $i\infty$, also at $s=1$.  
Secondly, we show that, for any $s\in \mathbb{N}$, the generating function equals Poincar\'e type series involving polylogarithms.
We also consider a generating function formed with derivatives in $s$ of the Niebur--Poincar\'e series and prove that the continuation
of the generating function at $s=1$ can be expressed in terms of
$\Gamma$-periodization of a point-pair invariant involving the Rogers dilogarithm and the Kronecker limit function associated to the non-holomorphic Eisenstein series.
\end{abstract}


\maketitle

\section{Introduction and statement of results}

\subsection{Definitions} Let $\Gamma\subset \PSL_2(\R)$ be a Fuchsian group of the first kind which has a cusp $i\infty$ of
width one\footnote{Actually the surface $M:=\Gamma \backslash \mathbb{H}$ has the cusp $i\infty$ of width one.} and possibly other cusps
as well as torsion points.
For $m\in\Z\setminus\{0\}$, the {\it Niebur--Poincar\'e series} (see \cite{Ni73}) associated to $i\infty$ is defined,
for $s\in\C$ with $\re(s)>1$, by
\begin{equation}\label{eq. N-P series defn}
F_{m,s}(\tau):=\sum_{\gamma\in\Gamma_\infty\setminus\Gamma} \sqrt{\im(\gamma\tau)}I_{s-\frac12}\left(2\pi |m|\im(\gamma \tau)\right)e^{2\pi im\re(\gamma\tau)},
\end{equation}
where $\Gamma_\infty:=\{\begin{psmallmatrix}1&n\\0&1\end{psmallmatrix}:n\in\Z\}$
is the stabilizer of $i\infty$ in $\Gamma$
and $I_\kappa$ denotes the $I$-Bessel function of order $\kappa$. The functions $F_{m,s}$ have meromorphic continuations
to the entire $s$-plane which are holomorphic at $s=1$ (see \cite{Ne73, Ni73}).
In this paper we study the following generating function
\begin{equation}\label{eq. defn gen funct F}
\mathcal F_{\Gamma,s}(z,\tau)=\mathcal F_s(z,\tau):= 2\pi\sum_{n\ge1}\sqrt{n}F_{-n,s}(\tau)e^{2\pi inz}\qquad(z,\tau\in\H).
\end{equation}
If $\Gamma=\PSL_2(\Z)$, then $2\pi \sqrt{n}F_{-n,1}= j_n$. Here $j_1:=j-744$ with $j$ the classical $j$-invariant and $j_n=nj_1\big|T_n$, where $T_n$ is the normalized $n$-th Hecke operator. In \cite{AKN97}, $H_\tau(z)=\mathcal F_{\PSL_2(\Z),1}(z,\tau)+1$ was studied, where it was proved that (see Theorem 3 of \cite{AKN97})
$$
H_\tau(z)=-\frac{1}{2\pi i}\frac{j'(z)}{j(z)-j(\tau)}.
$$
This identity is equivalent to the denominator formula for the Monster Lie algebra. The fact that $z\mapsto H_\tau(z)$ is a weight two meromorphic modular form on $\PSL_2(\Z)$ was further utilized in the study divisors of modular forms in \cite{BKO04}. An extension of those results to $\Gamma_0(N)$ was derived in \cite{BK16,BKLOR18}; see also \cite{Lo18} where a regularized inner product of $H_\tau$ and $H_z$ was evaluated.

Moreover, we study the following generating function of derivatives in $s$ of $[\frac{\partial}{\partial s}\{F_{-n,s}\}_{n\geq 1}]_{s=1}$
\begin{equation}\label{eq. defn gen deriv F}
\mathbb{F}_{\Gamma}(z,\tau)=\mathbb{F}(z,\tau) :=\left[\frac{\partial}{\partial s}\mathcal F_{s}(z,\tau)\right]_{s=1}= 2\pi\sum_{n\ge1}\sqrt{n}\left[\frac{\partial}{\partial s}F_{-n,s}(\tau)\right]_{s=1}e^{2\pi inz}.
\end{equation}
If $\im(z) > \im(\gamma\tau)$ for all $\gamma \in \Gamma$, then \eqref{eq. defn gen funct F} and \eqref{eq. defn gen deriv F} converge locally uniformly.
In the sequel we omit $\Gamma$ from the notation if the group is fixed. We also write $\mathcal F(z,\tau) := \mathcal F_1(z,\tau)$.


\subsection{The generating function $\mathcal F(z,\tau)$ as an automorphic form}

For $z\in\mathbb{H}$ with $z\neq \tau$, we prove that the generating function $\mathcal F(z,\tau)$ is related to the harmonic form
\begin{equation}\label{G}
\mathcal{G}(z,\tau):=\lim_{s\to 1} \left(G_s(z,\tau) +E(\tau;s)\right)\!,
\end{equation}
where  $E(\tau;s)$ is the real-analytic (parabolic) Eisenstein series associated to the cusp $i\infty$, see Subsection \ref{sec. parab Eis}
(resp. Subsection \ref{sec. aut green}) for a discussion of the relevant aspects of the Eisenstein series (resp. automorphic Green's function).
Throughout, we write $w\in\C$ as $w=w_1+iw_2$ with $w_1, w_2\in\R$.
We denote the Bruinier--Funke {\it$\xi$-operator} by $\xi_{\kappa,z} := 2iz_2^\kappa\overline{\frac{\partial}{\partial\overline z}}$
and the {\it Maass raising operator in weight zero} by $R_{0,z}:=2i\frac{\partial}{\partial z}$.
If it is clear from the context with respect to which variable a certain operator is taken,
then we may drop this dependence from the notation. Polar harmonic Maass forms behave
like Maass forms with respect to the action by $\Gamma$.  However, they are allowed to have poles in
the upper half-plane, see Subsection \ref{S:Maass} for details. Letting $$\widehat{\mathcal F}(z,\tau):=\mathcal F(z,\tau)+\frac{1}{\vol(\Gamma\backslash\mathbb{H})z_2},$$ our first main result is the following.

\begin{theorem}\label{thm: main 1}\hspace{0cm}
	\begin{enumerate}[label=\rm(\arabic*),leftmargin=*]
		\item For $z,\tau\in\mathbb{H}$, with $z\neq \gamma \tau$ for all $\gamma\in\Gamma$, we have
		\begin{equation*} 
			\widehat{\mathcal{F}}= R_{0,z}(\mathcal{G}(z,\tau)).
		\end{equation*}
		\item The function $\tau \mapsto \mathcal F(z,\tau)$
is a weight zero polar harmonic Maass form.
		\item The function $z \mapsto \widehat{\mathcal F}(z,\tau)$
is a weight two polar harmonic Maass form with the only pole in the fundamental domain at $z=\tau$. The corresponding polar part equals
	\begin{equation*}
  \frac{i\mathrm{Stab}_\tau}{2\pi(z-\tau)},
		\end{equation*}
where $\mathrm{Stab}_\tau$ denotes the order of the stabilizer group of the point $\tau$ in $\Gamma$.
	\end{enumerate}
\end{theorem}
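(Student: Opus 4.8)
The plan is to establish part (1) first, via the Fourier expansion of the resolvent kernel $G_s(z,\tau)$ in the variable $z$, and then to read off parts (2) and (3) as structural consequences. Recall (from the subsection on the automorphic Green's function) that for $\im(z)>\max_{\gamma}\im(\gamma\tau)$ the kernel $G_s(z,\tau)$ has a Fourier expansion in $z$ whose nonconstant coefficients are, up to explicit constants, the Niebur--Poincar\'e series $F_{-n,s}(\tau)$ times the Whittaker function $\sqrt{z_2}\,K_{s-\frac12}(2\pi|n|z_2)$, and whose constant term is a multiple of $z_2^{1-s}E(\tau;s)$. Adding the $z$-independent Eisenstein series $E(\tau;s)$ and letting $s\to1$ produces the Fourier expansion of $\mathcal{G}(z,\tau)$; the poles of $G_s$ and of $E(\tau;s)$ at $s=1$ cancel, so the limit exists.

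For part (1) I would apply $R_{0,z}=2i\partial_z$ to this expansion term by term, which is legitimate on the region $\im(z)>\max_\gamma\im(\gamma\tau)$ where everything converges locally uniformly. At $s=1$ one has $\sqrt{z_2}\,K_{\frac12}(2\pi|n|z_2)=\tfrac{1}{2\sqrt{|n|}}e^{-2\pi|n|z_2}$, so the $n$-th coefficient collapses to a pure exponential. For $n>0$ this gives $e^{2\pi inz}$, and since $R_{0,z}e^{2\pi inz}=-4\pi n\,e^{2\pi inz}$, the positive-frequency terms assemble precisely into $\mathcal{F}(z,\tau)=2\pi\sum_{n\ge1}\sqrt{n}F_{-n,1}(\tau)e^{2\pi inz}$. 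For $n<0$ the coefficient becomes anti-holomorphic in $z$ (a multiple of $e^{-2\pi i|n|\bar z}$) and is therefore annihilated by $R_{0,z}$. The remaining contribution is the constant term: expanding $E(\tau;s)\big(1-\tfrac{z_2^{1-s}}{2s-1}\big)$ about $s=1$, the simple pole of $E(\tau;s)$ meets the factor $(s-1)$ produced by raising $z_2^{1-s}$, and $R_{0,z}$ applied to it has limit exactly $\frac{1}{\vol(\Gamma\backslash\H)\,z_2}$. Summing the three contributions yields $R_{0,z}\mathcal{G}=\mathcal{F}+\frac{1}{\vol(\Gamma\backslash\H)z_2}=\widehat{\mathcal{F}}$.

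Parts (2) and (3) then follow from the identity $\widehat{\mathcal{F}}=R_{0,z}\mathcal{G}$ together with the differential properties of $\mathcal{G}$. Since $E(\tau;s)$ is independent of $z$ but depends on $\tau$, a short computation using $\Delta_{0}G_s=s(1-s)G_s$ and $\Delta_0 E(\tau;s)=s(1-s)E(\tau;s)$ gives $\Delta_{0,\tau}\mathcal{G}=0$ (because $G_s+E$ is regular at $s=1$ while $s(1-s)\to0$) yet $\Delta_{0,z}\mathcal{G}=\frac{1}{\vol(\Gamma\backslash\H)}$ (because in $z$ only the pole of $G_s$ survives), the asymmetry coming from the residue cancellation in the $\tau$-variable but not in $z$. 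For (2), $\Gamma$-invariance in $\tau$ is immediate from the definition of the $F_{-n,s}$, and since $R_{0,z}$ and the $z$-only correction commute with $\Delta_{0,\tau}$ one gets $\Delta_{0,\tau}\mathcal{F}=R_{0,z}\Delta_{0,\tau}\mathcal{G}=0$, so $\tau\mapsto\mathcal{F}(z,\tau)$ is weight-zero harmonic away from $z\in\Gamma\tau$; its growth at the cusps and singularity type come from the Fourier expansion and the diagonal singularity of $\mathcal{G}$. For (3), $R_{0,z}$ sends weight-zero automorphic functions to weight-two ones, and because $\Delta_{0,z}\mathcal{G}$ is constant the second application of $\xi_{0,z}$ annihilates it, giving $\Delta_{2,z}\widehat{\mathcal{F}}=-\xi_{0,z}\xi_{2,z}\widehat{\mathcal{F}}=0$; thus $z\mapsto\widehat{\mathcal{F}}$ is weight-two polar harmonic. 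For the polar part I would use that $\mathcal{G}(z,\tau)\sim\frac{\mathrm{Stab}_\tau}{2\pi}\log|z-\tau|$ as $z\to\tau$ (the $\mathrm{Stab}_\tau$ elements fixing $\tau$ each contribute the logarithmic singularity of the free-space resolvent) together with $R_{0,z}\log|z-\tau|=\frac{i}{z-\tau}$, which gives the stated $\frac{i\,\mathrm{Stab}_\tau}{2\pi(z-\tau)}$; that $z=\tau$ is the only pole in a fundamental domain is clear since the singular set of $\mathcal{G}$ is $\Gamma\tau$.

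The main obstacle I expect is the bookkeeping at $s=1$: producing the exact Fourier expansion of $G_s$ in $z$ with the correct normalizing constants and the correct identification of coefficients as $F_{-n,s}(\tau)$, and then carrying the Laurent expansions of $E(\tau;s)$, $z_2^{1-s}$, and $(2s-1)^{-1}$ through the limit so that the $\frac{1}{\vol(\Gamma\backslash\H)z_2}$ term emerges with the right sign while the anti-holomorphic terms cancel cleanly; pinning down the constant $\frac{\mathrm{Stab}_\tau}{2\pi}$ in the diagonal singularity of the Green's function (hence in the polar part) is the other delicate point. Interchanging $R_{0,z}$ with the $s\to1$ limit and with the sum over $n$ is justified by the local uniform convergence on $\im(z)>\max_\gamma\im(\gamma\tau)$ recorded after \eqref{eq. defn gen funct F}.
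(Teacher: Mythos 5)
Your parts (1) and (2) follow the paper's proof essentially verbatim: both rest on the Fourier expansion \eqref{Fourier exp Green} of $G_s(z,\tau)$ in $z$, the cancellation at $s=1$ of the pole of $E(\tau;s)$ against the factor coming from $\frac{z_2^{1-s}}{1-2s}$, the collapse $\sqrt{z_2}K_{\frac12}(2\pi|n|z_2)e^{2\pi i nz_1}=\frac{1}{2\sqrt{n}}e^{2\pi i nz}$ for $n>0$ (with the $n<0$ terms anti-holomorphic, hence killed by $R_{0,z}$), and inheritance of the $\tau$-properties from $\mathcal{G}$; you should, however, add the paper's final continuation step, since the series defining $\mathcal{F}$ only converges for $z_2>\im(\gamma\tau)$ while part (1) is asserted for all $z\neq\gamma\tau$ (the identity \eqref{eq. lim as s to 1} extends by real-analyticity of its left-hand side). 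For the harmonicity in part (3) you are in fact more careful than the paper: $\Delta_{0,z}\mathcal{G}$ equals the nonzero constant $\frac{1}{\vol(\Gamma\backslash\mathbb{H})}$ (only the residue of $G_s$ survives in $z$, since $E(\tau;s)$ is $z$-independent), and your observation that $\xi_{2,z}(R_{0,z}\mathcal{G})$ is therefore constant, hence annihilated by $\xi_{0,z}$, is exactly what makes $\Delta_{2,z}\widehat{\mathcal{F}}=0$ go through.

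The genuine gap is in your derivation of the polar part. You apply $R_{0,z}$ to the asymptotic relation \eqref{eq. greens function log singul expr}, $\mathcal{G}(z,\tau)=\frac{\mathrm{Stab}_\tau}{2\pi}\log|z-\tau|+O(1)$, and differentiate term by term. An asymptotic relation with an $O(1)$ error cannot be differentiated: $f=g+O(1)$ does not imply $\partial_z f=\partial_z g+O(1)$ (the bounded remainder could oscillate, contributing arbitrarily bad singularities to the derivative), so the step ``$R_{0,z}\log|z-\tau|=\frac{i}{z-\tau}$ gives the polar part'' is exactly the assertion that the remainder's $z$-derivative has no pole at $z=\tau$, which is what must be proved. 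The paper closes this by working with the kernel itself rather than its asymptotics: starting from \eqref{ks}, it computes $\frac{\partial}{\partial z}k_s(z,\tau)$ in closed form using 15.5.3 of \cite{DLMF} and 9.131.1 of \cite{GR07}, obtaining an exact hypergeometric expression times $\frac{i\tau_2}{2\pi(z-\tau)(z-\overline{\tau})}$, and then applies 9.122 of \cite{GR07} to conclude $\frac{\partial}{\partial z}k_s(z,\tau)\sim\frac{i\tau_2}{2\pi(z-\tau)(z-\overline{\tau})}$ as $z\to\tau$; summing the $\mathrm{Stab}_\tau$ singular terms of \eqref{eq. defn Green} and multiplying by $2i$ yields $\frac{i\,\mathrm{Stab}_\tau}{2\pi(z-\tau)}$. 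Alternatively, you could repair your route by upgrading \eqref{eq. greens function log singul expr} to the refined local expansion $k_s(z,\tau)=a(z)\log|z-\tau|+b(z)$ with $a,b$ real-analytic near $\tau$ and $a(\tau)=\frac{1}{2\pi}$ (this follows from the connection formula for ${}_2F_1$ at argument $1$); only with that structural information is the termwise differentiation legitimate.
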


Theorem \ref{thm: main 1} extends results from \cite{BK16,BKLOR18} as follows. Suppose that $\Gamma=\overline{\Gamma_0(N)} := \Gamma_0(N)/\{\pm I_2\}$ ($I_2:=\begin{psmallmatrix}
1&0\\0&1
\end{psmallmatrix}$) is the congruence group of level $N\in\mathbb{N}$.
In Theorem 1.1 of \cite{BKLOR18}, $z\mapsto \mathcal F_{\overline{\Gamma_0(N)}}(z,\tau)$ was studied. There it was proven that for $z_2>\max\{\tau_2,\tau_2^{-1}\}$, one has
\begin{equation}\label{eq. gen function different express}
\mathcal F_{\overline{\Gamma_0(N)}}(z,\tau)=H_{N,\tau}^{\ast}(z)-
\frac{3}{\pi\left[\mathrm{SL}_2(\mathbb{Z}):\Gamma_0(N)\right]z_2}
\end{equation}
for some function $H_{N,\tau}^{\ast}$, which was studied in detail in \cite{BK16}.
Specifically, it was shown that $H_{N,\tau}^{\ast}(z)$ is polar harmonic Maass form of weight two, if viewed as a function of $z$,
and of weight zero, if viewed as a function of $\tau$.
In \cite{BK16} the function $H_{N,\tau}^{\ast}$ was defined through analytic continuation to $s=0$ of the Poincar\'e series (defined for $\re(s)>0$)
$$
P_{N,s}(z,\tau) :=\sum_{\gamma\in\Gamma_0(N)}\frac{\varphi_s(\gamma z, \tau)}{j(\gamma,z)^2 |j(\gamma,z)|^{2s}}.
$$
Here, we let $j(\begin{psmallmatrix}a&b\\c&d\end{psmallmatrix}, z):=cz+d$ and
$$
\varphi_s(z,\tau):=\tau_2^{1+s}(z-\tau)^{-1}(z-\overline\tau)^{-1}|z-\overline\tau|^{-2s}.
$$
Theorem \ref{thm: main 1} holds for any Fuchsian group of the first kind which has a cusp $i\infty$ of width one
(even it not arithmetic).  As such, our result extends the above cited result from \cite{BKLOR18}, whose
methods are specific if $\Gamma$ is a congruence group.  Furthermore, note that
$$
\frac{3}{\pi[\mathrm{SL}_2(\mathbb{Z}):\Gamma_0(N)]}=
\frac1{\vol\left(\overline{\Gamma_0(N)}\backslash\PSL_2(\Z)\right)}.
$$
Combining this with Theorem \ref{thm: main 1} (1) and \eqref{eq. gen function different express}, Theorem \ref{thm: main 1} yields, in the notation of\footnote{In view of the construction of $H_{N,\tau}^{\ast}(z)$ in
	\cite{BK16} we find this identity rather surprising.} \cite{BK16} that\footnote{Our method of proof for Theorem \ref{thm: main 1} employs spectral theoretic techniques, which
	is the reason we are able to consider general Fuchsian groups.}
$$
H_{N,\tau}^{\ast}(z)=R_{0,z}\left(\mathcal{G}_{\overline{\Gamma_0(N)}}(z,\tau)\right).
$$

Theorem \ref{thm: main 1} allows us to study divisors of weight $k$ meromorphic modular forms on $\Gamma$, assuming that those are not supported at the cusps. Namely, for a weight $k$ meromorphic modular form on $\Gamma$ with divisors not supported at the cusps of $\Gamma$, define the \emph{divisor polar harmonic Maass form}
\begin{equation}\label{eq. div dom form}
f^{\rm div}(z):=\sum_{\tau\in\mathfrak F}\frac{\mathrm{ord}_\tau(f)}{\mathrm{Stab}_\tau}\widehat{\mathcal{F}}(z,\tau),
\end{equation}
where $\mathfrak F$ is the fundamental domain for $\Gamma \backslash \mathbb{H}$ and $\mathrm{ord}_\tau(f)$ denotes the order of $f$ at $\tau$. With this notation, we have the following corollary.
\begin{corollary}\label{cor. divisor}
  Let $f$ be a weight $k$ meromorphic modular form on $\Gamma$ with a divisor not supported at the cusps of $\Gamma$. Then, with $\Theta:=\frac{1}{2\pi i}\frac{\partial}{\partial z}$ and $S_2(\Gamma)$ the space of weight two cusp forms $\Gamma$,
  $$
  f^{\rm div}(z)\equiv \frac{k}{4\pi z_2}-\frac{\Theta(f(z))}{f(z)} \pmod{S_2(\Gamma)}.
  $$
\end{corollary}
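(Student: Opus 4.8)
The plan is to prove that the difference
\begin{equation*}
D(z):=f^{\rm div}(z)-\left(\frac{k}{4\pi z_2}-\frac{\Theta(f(z))}{f(z)}\right)
\end{equation*}
is a holomorphic weight two \emph{cusp} form on $\Gamma$, which is exactly the asserted congruence. I would establish this by verifying, in turn, that $D$ is modular of weight two, has no poles in $\H$, is holomorphic, and vanishes at every cusp. That $z\mapsto f^{\rm div}(z)$ transforms with weight two is immediate from Theorem \ref{thm: main 1}(3) applied termwise to the finite sum \eqref{eq. div dom form}. For the remaining terms, differentiating $f(\gamma z)=(cz+d)^kf(z)$ logarithmically gives, for $\gamma=\begin{psmallmatrix}a&b\\c&d\end{psmallmatrix}\in\Gamma$,
\begin{equation*}
\frac{\Theta(f)(\gamma z)}{f(\gamma z)}-(cz+d)^2\frac{\Theta(f)(z)}{f(z)}=-\frac{ikc(cz+d)}{2\pi},
\end{equation*}
while $\im(\gamma z)=z_2|cz+d|^{-2}$ produces the identical anomaly
\begin{equation*}
\frac{k}{4\pi\im(\gamma z)}-(cz+d)^2\frac{k}{4\pi z_2}=-\frac{ikc(cz+d)}{2\pi}.
\end{equation*}
Thus the quasimodular anomaly of the logarithmic derivative is cancelled by the non-modularity of $z_2^{-1}$, so $\frac{k}{4\pi z_2}-\frac{\Theta f}{f}$ is weight two invariant, and hence $D$ is modular of weight two.

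Next I would match the singularities in $\H$. If $\tau_0\in\mathfrak F$ has $\mathrm{ord}_{\tau_0}(f)\neq 0$, then the only summand of \eqref{eq. div dom form} singular at $z=\tau_0$ is the one with $\tau=\tau_0$, which by the polar part recorded in Theorem \ref{thm: main 1}(3) contributes
\begin{equation*}
\frac{\mathrm{ord}_{\tau_0}(f)}{\mathrm{Stab}_{\tau_0}}\cdot\frac{i\,\mathrm{Stab}_{\tau_0}}{2\pi(z-\tau_0)}=\frac{i\,\mathrm{ord}_{\tau_0}(f)}{2\pi(z-\tau_0)}.
\end{equation*}
Since $\Theta(f)/f$ has principal part $\frac{\mathrm{ord}_{\tau_0}(f)}{2\pi i(z-\tau_0)}=-\frac{i\,\mathrm{ord}_{\tau_0}(f)}{2\pi(z-\tau_0)}$ at $\tau_0$ while $\frac{k}{4\pi z_2}$ is smooth there, the form $\frac{k}{4\pi z_2}-\frac{\Theta f}{f}$ has the same principal part $\frac{i\,\mathrm{ord}_{\tau_0}(f)}{2\pi(z-\tau_0)}$ as $f^{\rm div}$. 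The principal parts cancel, so $D$ is holomorphic on $\H$.

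To rule out a non-holomorphic part, I would observe that for fixed $\tau$ the series $\mathcal F(z,\tau)$ of \eqref{eq. defn gen funct F} is, as a function of $z$, given by a $q$-expansion and hence meromorphic; consequently the entire non-holomorphic part of $\widehat{\mathcal F}(\cdot,\tau)=\mathcal F(\cdot,\tau)+\frac{1}{\vol(\Gamma\backslash\H)z_2}$ is $\frac{1}{\vol(\Gamma\backslash\H)z_2}$, independently of $\tau$. Summing over the finitely many singular $\tau$ and invoking the valence formula
\begin{equation*}
\sum_{\tau\in\mathfrak F}\frac{\mathrm{ord}_\tau(f)}{\mathrm{Stab}_\tau}=\frac{k}{4\pi}\vol(\Gamma\backslash\H),
\end{equation*}
valid precisely because the divisor of $f$ avoids the cusps, the non-holomorphic part of $f^{\rm div}$ equals $\frac{k}{4\pi z_2}$. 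This matches the non-holomorphic part of $\frac{k}{4\pi z_2}-\frac{\Theta f}{f}$, so the two cancel and $D$ is holomorphic.

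It remains to check cuspidality. As $z\to i\infty$ each $\widehat{\mathcal F}(z,\tau)\to 0$ (its $q$-expansion has no constant term), and since $\mathrm{ord}_{i\infty}(f)=0$ also $\Theta(f)/f\to 0$ and $z_2^{-1}\to 0$; the same local analysis at each cusp $\mathfrak a$, using $\mathrm{ord}_{\mathfrak a}(f)=0$, shows that the holomorphic weight two form $D$ has vanishing constant term at every cusp. Hence $D\in S_2(\Gamma)$, which is Corollary \ref{cor. divisor}. I expect the holomorphicity step to be the main obstacle: the exact constant $\frac{k}{4\pi}$ on the right-hand side is invisible termwise and emerges only after summing the $\tau$-independent shadows $\frac{1}{\vol(\Gamma\backslash\H)z_2}$ against the valence formula, so the crux is to pin down the non-holomorphic part of $\widehat{\mathcal F}$ exactly and to confirm that the cusp contributions to the valence formula genuinely vanish.
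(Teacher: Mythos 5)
Your strategy is essentially the paper's: form the difference, verify weight-two modularity, cancel the poles in $\H$ using the polar part from Theorem \ref{thm: main 1} (3), use the valence formula to account for the term $\frac{k}{4\pi z_2}$, and finish by checking decay at the cusps. Indeed, your explicit anomaly cancellation for $\frac{k}{4\pi z_2}-\frac{\Theta(f)}{f}$ is exactly the paper's identity $\frac{\Theta(f(z))}{f(z)}-\frac{k}{4\pi z_2}=-\frac{1}{4\pi}\frac{R_{k,z}(f(z))}{f(z)}$ unwound, and your computation of the polar parts and of the non-holomorphic part of $f^{\rm div}$ via the valence formula matches the paper's bookkeeping. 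One point of rigor: the $q$-expansion \eqref{eq. lim as s to 1} converges only for $z_2>\im(\gamma\tau)$ (all $\gamma\in\Gamma$), so ``$\mathcal F(\cdot,\tau)$ is given by a $q$-expansion and hence meromorphic'' does not by itself pin down the non-holomorphic part of $\widehat{\mathcal F}(\cdot,\tau)$ on all of $\H$; you should add that $\xi_{2,z}(\widehat{\mathcal F}(\cdot,\tau))$ is holomorphic away from $\Gamma\tau$ (by harmonicity, Theorem \ref{thm: main 1} (3)) and equals the constant $-\frac{1}{\vol(\Gamma\backslash\H)}$ on that half-plane, hence everywhere. This is a fixable imprecision, not a gap.

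The genuine gap is the final step when $\Gamma$ has cusps other than $i\infty$, which is the setting the paper explicitly allows. You assert that ``the same local analysis at each cusp $\mathfrak a$'' gives vanishing of $D$ there, but for the part $f^{\rm div}$ it is not the same analysis: the expansion \eqref{eq. lim as s to 1} of $\widehat{\mathcal F}(z,\tau)$ lives at $i\infty$ and says nothing about $\widehat{\mathcal F}(\cdot,\tau)|_{2}\,\sigma_\ell$ as $z\to i\infty$; a polar harmonic Maass form is a priori permitted linear exponential growth at the other cusps, so decay there must be proved. What is needed (and what the paper supplies) is the Fourier expansion \eqref{eq. Four exp Green at cusps} of $G_s$ at the cusp $\varrho_\ell$, combined with the Kronecker limit expansions \eqref{KronLimitPArGen} and \eqref{eq. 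KronLimitPAr cusps}, which gives
\begin{equation*}
\widehat{\mathcal{F}}(\sigma_\ell z,\tau)=\frac{1}{\vol(\Gamma\backslash \H)z_2}+2\pi \sum_{n\geq 1}\sqrt{n}\,F_{-n,1,\varrho_\ell}(\tau)e^{2\pi i nz},
\end{equation*}
an expansion in the Niebur--Poincar\'e series attached to $\varrho_\ell$; only from this does the vanishing at $\varrho_\ell$ follow. (Your treatment of $\frac{k}{4\pi z_2}-\frac{\Theta(f)}{f}$ at the other cusps, using $c_{f,\sigma}(0)\neq 0$ since the divisor avoids the cusps, genuinely is the same analysis and is fine.) For a group whose only cusp is $i\infty$, your proof is complete modulo the imprecision noted above.
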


Before we continue with our deeper study of these generating functions, we would like to emphasize that Theorem \ref{thm: main 1} and Corollary \ref{cor. divisor} hold true for any Fuchsian group with at least one cusp. This allows one to study arithmetic properties of automorphic forms on non-arithmetic groups, see e.g. \cite{Wo83} where non-arithmetic triangle groups were considered. Further arithmetic studies of non-arithmetic triangle groups include \cite{DGMS13, MS14, Tr19}; see also \cite{MU23} for an application of harmonic Maass forms on triangle groups to low dimensional topology. Non-arithmetic groups of interest include the semi-arithmetic Fuchsian groups introduced in \cite{SW00}; see \cite{MZ16} for a thorough study of automorphic forms on those groups.

Corollary \ref{cor. divisor} is particularly useful if genus of $\Gamma\backslash\mathbb{H}$ is
zero, and this is the case for non-arithmetic triangle groups with a cusp. In this case one also has that the Hauptmodul $J$ for the group $\Gamma$ is such that $2\pi \sqrt{n}F_{-n,1}(\tau) = T_n(J(\tau))+ c_n$, where $T_n$ is the $n$-th Faber polynomial and $c_n$ is a certain explicitly computable constant depending only on $n$ and $\Gamma$. Then Corollary \ref{cor. divisor} becomes 
\begin{equation*}
\frac{\Theta(f(z))}{f(z)}= - \sum_{\tau\in\mathfrak F}\frac{\mathrm{ord}_\tau(f)}{\mathrm{Stab}_\tau}\mathcal{F}(z,\tau)=\sum_{n\ge1} 2\pi\sqrt{n} \sum_{\tau\in\mathfrak F}\frac{\mathrm{ord}_\tau(f)}{\mathrm{Stab}_\tau} F_{-n,1}(\tau) e^{2\pi i n z},
\end{equation*}
which is particularly useful for the study of arithmetic properties of forms on genus zero non-arithmetic groups. We plan to pursue this study in forthcoming work.

\subsection{The generating function $\mathcal F(z,\tau)$ as a series over $\Gamma_{\infty}\setminus\Gamma$}

Since \eqref{eq. defn gen funct F} is invariant with respect to $\Gamma_{\infty}$, both in $z$ and in $\tau$, it is natural to ask if one can write \eqref{eq. defn gen funct F} in certain cases as a series over $\Gamma_{\infty}\setminus\Gamma$.  We prove that
one can do so if $s=n \in \mathbb{N}$.

\begin{theorem}\label{thm: main 2_1}\hspace{0cm} With notation as above,
for $z,\tau\in\mathbb{H}$ with $z_2>\im(\gamma \tau)$, we have\footnote{Throughout we denote by $\Log$ the principal branch of the logarithm.}, for all $\gamma\in\Gamma$,
$$
\mathcal F_{2}(z,\tau) = \sum_{\gamma\in\Gamma_\infty\setminus\Gamma}
\left(
\frac{e^{2\pi i\left(z-{\gamma\tau}\right)}}{1-e^{2\pi i\left(z-{\gamma\tau}\right)}} +
\frac{e^{2\pi i\left(z-\overline{\gamma\tau}\right)}}{1-e^{2\pi i\left(z-\overline{\gamma\tau}\right)}}
+
\frac{\Log\!\left(1-e^{2\pi i\left(z-\overline{\gamma\tau}\right)}\right)
-\Log\!\left(1-e^{2\pi i\left(z-\gamma\tau\right)}\right)}
{\pi i( \overline{\gamma\tau}-\gamma \tau)}\right)\!.
$$
\end{theorem}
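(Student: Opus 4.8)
The statement asserts a closed form for $\mathcal{F}_2(z,\tau)$, the generating function at $s=2$, expressed as a series over $\Gamma_\infty\backslash\Gamma$. The natural approach is to unfold: substitute the definition of the Niebur–Poincaré series into the generating function, interchange the two sums (one over $n$ and one over cosets $\gamma\in\Gamma_\infty\backslash\Gamma$), and evaluate the inner sum over $n$ in closed form. The key point is that at $s=2$ the $I$-Bessel function $I_{s-1/2}=I_{3/2}$ is elementary — it is a combination of $\sinh$ and $\cosh$ divided by powers of the argument — so each summand of $F_{-n,2}(\tau)$ becomes an explicit elementary function of $n$ and $\im(\gamma\tau)$, $\re(\gamma\tau)$.

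Let me walk through this.

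\section*{Unfolding and the inner sum}

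First I would write, for a fixed coset representative $\gamma$, the contribution to $\mathcal{F}_2(z,\tau)$ coming from that coset. Abbreviating $w:=\gamma\tau$ with $w=w_1+iw_2$, the summand of $2\pi\sqrt{n}\,F_{-n,2}(\tau)$ indexed by $\gamma$ is
\begin{equation*}
2\pi\sqrt{n}\,\sqrt{w_2}\,I_{3/2}(2\pi n w_2)\,e^{-2\pi i n w_1}\,e^{2\pi i n z}.
\end{equation*}
Using the elementary formula $I_{3/2}(x)=\sqrt{\tfrac{2}{\pi x}}\bigl(\cosh(x)-\tfrac{\sinh(x)}{x}\bigr)$, the Bessel factor collapses: with $x=2\pi n w_2$ one finds that $2\pi\sqrt{n}\sqrt{w_2}\,I_{3/2}(2\pi n w_2)$ becomes a rational expression in $n$ times $\cosh(2\pi n w_2)$ and $\sinh(2\pi n w_2)$. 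Writing $\cosh$ and $\sinh$ in terms of $e^{\pm 2\pi n w_2}$ and combining with $e^{2\pi i n(z-w_1)}$ produces pure exponentials $e^{2\pi i n(z-w)}$ and $e^{2\pi i n(z-\overline{w})}$, each weighted by a rational function of $n$ (specifically by $1$ and by $1/n$). I would then sum over $n\ge 1$. The condition $z_2>\im(\gamma\tau)=w_2$ guarantees $|e^{2\pi i(z-w)}|<1$ and $|e^{2\pi i(z-\overline w)}|<1$, so the geometric-type series converge. The $n^0$-weighted pieces sum via $\sum_{n\ge1}q^n=\tfrac{q}{1-q}$ to the first two terms in the claimed formula, while the $n^{-1}$-weighted pieces sum via $\sum_{n\ge1}\tfrac{q^n}{n}=-\Log(1-q)$ to the dilogarithm-free logarithmic term, with the prefactor $\tfrac{1}{\pi i(\overline w-w)}=\tfrac{1}{2\pi w_2}$ emerging from the $\tfrac{1}{x}$ in $I_{3/2}$.

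\section*{Justifying the interchange and continuation}

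The main obstacle is analytic rather than computational: the series \eqref{eq. N-P series defn} defining $F_{-n,s}$ converges only for $\re(s)>1$, whereas the target value is $s=2$, which is fine, but one must verify that the double sum (over $n$ and over cosets) converges absolutely so that the interchange of summation is legitimate at $s=2$ directly. The decay of $I_{3/2}(2\pi n w_2)\sim e^{2\pi n w_2}$ is exactly compensated by the hypothesis $z_2>w_2$ appearing through $e^{-2\pi n z_2}$, yielding a net exponential decay $e^{-2\pi n(z_2-w_2)}$; summed against the coset sum (which converges for the Niebur series in its region) this should give absolute convergence of the double series. I would make this quantitative by bounding $\sum_{\gamma}\sqrt{w_2}\,e^{-2\pi n(z_2-w_2)}$ uniformly and invoking dominated convergence / Fubini. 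One subtlety to check is uniformity of the bound $z_2>\im(\gamma\tau)$ over all $\gamma$, which the statement explicitly assumes (``for all $\gamma\in\Gamma$''); this is what makes the geometric series manipulations valid termwise inside the coset sum. Once absolute convergence is secured, the interchange is justified and the elementary evaluation of the inner $n$-sum completes the proof.

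Thus the strategy is: (i) reduce $I_{3/2}$ to elementary functions; (ii) expand $\cosh,\sinh$ into exponentials and identify the two exponential bases $e^{2\pi i(z-\gamma\tau)}$ and $e^{2\pi i(z-\overline{\gamma\tau})}$; (iii) sum the resulting $\sum_n q^n$ and $\sum_n q^n/n$ series in closed form; (iv) justify the interchange of the double summation via the net exponential decay from $z_2>\im(\gamma\tau)$. The bookkeeping in steps (ii)–(iii) to get the precise coefficient $\tfrac{1}{\pi i(\overline{\gamma\tau}-\gamma\tau)}$ is the only place where sign and normalization errors can creep in, so I would track the factors of $2\pi$, $i$, and $w_2$ carefully there.
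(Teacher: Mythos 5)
Your proposal is correct and takes essentially the same route as the paper's proof: unfold into the double series over $n$ and $\gamma$, use $I_{\frac32}(x)=\sqrt{2/(\pi x)}\bigl(\cosh(x)-\sinh(x)/x\bigr)$, and evaluate the resulting series $\sum_{n}q^n$ and $\sum_n q^n/n$ in the two bases $e^{2\pi i(z-\gamma\tau)}$ and $e^{2\pi i(z-\overline{\gamma\tau})}$, exactly as the paper does (the paper in fact performs the $n$--$\gamma$ interchange without comment, so your attention to Fubini is if anything more careful). One detail in your convergence sketch needs repair, though: the proposed majorant $\sum_\gamma\sqrt{\im(\gamma\tau)}\,e^{-2\pi n(z_2-\im(\gamma\tau))}$ diverges, since it dominates $e^{-2\pi nz_2}E\bigl(\tau;\tfrac12\bigr)$ and the Eisenstein series diverges at $s=\tfrac12$; to get a summable dominating function you must retain the small-argument vanishing of the Bessel factor, e.g. $\cosh(x)-\sinh(x)/x\ll x^2e^x$, which gives a termwise bound $\ll n^2\im(\gamma\tau)^2e^{-2\pi n(z_2-\im(\gamma\tau))}$ whose double sum converges because $z_2-\im(\gamma\tau)$ is uniformly bounded below (only finitely many $\gamma$ have $\im(\gamma\tau)$ above any $\varepsilon>0$) and $\sum_\gamma \im(\gamma\tau)^2=E(\tau;2)<\infty$.
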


\noindent
Additionally,
we describe how one can similarly write $\mathcal F_{n} (z,\tau)$ for $n\ge 2$ as a series over $\Gamma_{\infty}\setminus\Gamma$
involving polylogarithms. In the case $s=1$, we prove the following result.

\begin{theorem}\label{thm: main 2_2}\hspace{0cm} Using notation as above,
for $z, \tau\in\mathbb{H}$ with $z_2>\im(\gamma \tau)$ for all $\gamma\in\Gamma$, we have
$$
 \sum_{\genfrac{}{}{0pt}{}{\gamma\in\Gamma_\infty\setminus\Gamma}{\im(\gamma\tau) > \varepsilon}}
\left(\frac{1}{1-e^{2\pi i (z-\gamma\tau)}} - \frac{1}{1-e^{2\pi i (z-\overline{\gamma\tau})}}\right)
=\mathcal F_{1} (z,\tau) + O\left(\frac{1}{\log (\varepsilon)}\right)
\,\,\,\,\,
\text{\rm as $\varepsilon \rightarrow 0^{+}$.}
$$
In particular,
$$
\mathcal F_{1} (z,\tau) = \lim\limits_{\varepsilon \rightarrow 0}
\sum_{\genfrac{}{}{0pt}{}{\gamma\in\Gamma_\infty\setminus\Gamma}{\im(\gamma\tau) > \varepsilon}}
\left(\frac{1}{1-e^{2\pi i (z-\gamma\tau)}} - \frac{1}{1-e^{2\pi i (z-\overline{\gamma\tau})}}\right).
$$
\end{theorem}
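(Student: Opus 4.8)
The plan is to first carry out the sum over $n$ inside each $\Gamma_\infty\setminus\Gamma$-orbit term, obtaining a closed form, and then to treat the (only conditionally convergent) sum over $\gamma$ by an analytic regularization in $s$. For $\gamma\in\Gamma_\infty\setminus\Gamma$ set
$$
\Psi_s(z,\gamma\tau):=2\pi\sum_{n\ge 1}\sqrt n\,\sqrt{\im(\gamma\tau)}\,I_{s-\frac12}\!\left(2\pi n\,\im(\gamma\tau)\right)e^{-2\pi i n\re(\gamma\tau)}e^{2\pi i n z},
$$
which converges for $z_2>\im(\gamma\tau)$. Specializing $s=1$ and using $I_{\frac12}(x)=\sqrt{2/(\pi x)}\,\sinh(x)$ gives $\sqrt{\im(\gamma\tau)}\,I_{\frac12}(2\pi n\im(\gamma\tau))=(\pi\sqrt n)^{-1}\sinh(2\pi n\im(\gamma\tau))$, so that after writing $\sinh$ as a difference of exponentials and summing the two resulting geometric series I obtain
$$
\Psi_1(z,\gamma\tau)=\frac{1}{1-e^{2\pi i(z-\gamma\tau)}}-\frac{1}{1-e^{2\pi i(z-\overline{\gamma\tau})}}.
$$
Thus the truncated sum in the theorem is precisely $S_\varepsilon(z,\tau):=\sum_{\im(\gamma\tau)>\varepsilon}\Psi_1(z,\gamma\tau)$, a finite sum, since only finitely many cosets satisfy $\im(\gamma\tau)>\varepsilon$.

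Next I would justify $\mathcal F_s=\sum_{\gamma\in\Gamma_\infty\setminus\Gamma}\Psi_s(z,\gamma\tau)$ for $\re(s)>1$, where interchanging the $n$- and $\gamma$-summations is legitimate because both converge absolutely in this range (the $\gamma$-sum is dominated by the Eisenstein series $E(\tau;\re(s))$, using $I_{s-\frac12}(x)\ll x^{\re(s)-\frac12}$ for bounded $x$ together with the decay $e^{-2\pi n z_2}$ controlling the large-$n$ range). Splitting off the finite head and using that $s\mapsto\mathcal F_s$ is holomorphic at $s=1$, I get
$$
\mathcal F_1(z,\tau)-S_\varepsilon(z,\tau)=\lim_{s\to 1}\sum_{\im(\gamma\tau)\le\varepsilon}\Psi_s(z,\gamma\tau)=:T_\varepsilon,
$$
where the tail $T_\varepsilon$ is holomorphic at $s=1$ even though the series $\sum_\gamma\Psi_1$ itself diverges. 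The entire theorem is thereby reduced to the single estimate $T_\varepsilon=O(1/\log\varepsilon)$.

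To estimate $T_\varepsilon$ I would expand $\Psi_s$ in its leading small-argument Bessel behavior, $\Psi_s(z,\gamma\tau)=c(s)\,\im(\gamma\tau)^s\Li_{-s}(e^{2\pi i(z-\re(\gamma\tau))})+(\text{faster-decaying terms})$, which turns the tail into a combination of truncated non-holomorphic Poincar\'e series $\sum_{\im(\gamma\tau)\le\varepsilon}\im(\gamma\tau)^s e^{-2\pi i m\re(\gamma\tau)}$ with $m\ge 1$. The essential input is the pole structure at $s=1$: the parabolic Eisenstein series $E(\tau;s)$ has a simple pole there with residue $1/\vol(\Gamma\backslash\H)$, so that the truncated head $\sum_{\im(\gamma\tau)>\varepsilon}\im(\gamma\tau)^s$ grows like $-\tfrac{1}{\vol(\Gamma\backslash\H)}\log\varepsilon$ as $\varepsilon\to 0^+$. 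Coupling the two limits by choosing $s=s(\varepsilon)\to 1$ with $s(\varepsilon)-1\asymp 1/\log(1/\varepsilon)$, and balancing the error $\mathcal F_{s(\varepsilon)}-\mathcal F_1=O(s(\varepsilon)-1)$ against the regularized tail—whose size is governed by the factor $\varepsilon^{s-1}=e^{(s-1)\log\varepsilon}$ together with the Eisenstein asymptotics—should yield the claimed bound $T_\varepsilon=O(1/\log\varepsilon)$. The ``In particular'' statement then follows at once on letting $\varepsilon\to 0^+$.

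The main obstacle is exactly this tail estimate. The naive bound $|\Psi_1(z,\gamma\tau)|\ll\im(\gamma\tau)$ is useless on its own, since $\sum_\gamma\im(\gamma\tau)$ diverges (this is the $s=1$ divergence of $E(\tau;s)$); one cannot pass to absolute values and must instead exploit the cancellation coming from the oscillating factors $e^{-2\pi i m\re(\gamma\tau)}$ in the Poincar\'e series—which are holomorphic at $s=1$ precisely because the frequency $m\ge 1$ removes the Eisenstein pole—or, equivalently, the analytic continuation in $s$ itself. Making the coupling of $s\to 1$ and $\varepsilon\to 0$ quantitative, and in particular showing that the logarithmic divergences of head and tail cancel so as to leave only the reciprocal logarithm, is the delicate part of the argument.
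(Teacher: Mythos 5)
Your opening computation is correct and parallels the paper's starting point: at $s=1$ the inner sum over $n$ collapses, via $I_{\frac12}(x)=\sqrt{2/(\pi x)}\sinh(x)$ and two geometric series, to $\frac{1}{1-e^{2\pi i (z-\gamma\tau)}} - \frac{1}{1-e^{2\pi i (z-\overline{\gamma\tau})}}$, and your reduction of the theorem to the tail estimate $T_\varepsilon=O(1/\log\varepsilon)$ is legitimate (the head is a finite sum, hence entire in $s$, so the tail inherits the continuation of $\mathcal F_s$). But the proposal stops exactly where the theorem begins. The bound $T_\varepsilon=O(1/\log\varepsilon)$ is the entire content of the statement, and you offer only a balancing heuristic (``should yield the claimed bound'') while conceding that making the coupling of $s\to1$ and $\varepsilon\to0$ quantitative ``is the delicate part of the argument.'' This is a genuine gap, not a technicality: holomorphy of the weighted series at $w=s-1=0$ is Abel-type information about smoothed averages of the coefficients, and no choice of coupling $s(\varepsilon)-1\asymp 1/\log(1/\varepsilon)$ can, by itself, convert that into an estimate for the sharp-cutoff sum $\sum_{\im(\gamma\tau)>\varepsilon}$; that conversion is precisely a Tauberian problem and requires a side condition on the coefficients.

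The paper closes this gap with three ingredients absent from your sketch. First, the two-variable regularization $\mathcal F_{s,w}(z,\tau)=2\pi\sum_{n\ge1}n^{\frac{w+1}{2}}F_{-n,s}(\tau)e^{2\pi i nz}$, holomorphic near $(s,w)=(1,0)$, evaluated along the diagonal $w=1-s$ and combined with the Bessel approximation of Lemma \ref{lem:Bessel_bound}; this shows that the weighted series \eqref{eq:weighted_Eisen} continues holomorphically to $w=0$ and equals $\mathcal F_1(z,\tau)+O(w)$ as $w\to0^+$. Second, the counting estimate \eqref{eq:counting2_Eisen}, itself a Tauberian consequence of the Eisenstein pole, with its $O\left(\frac{1}{\varepsilon\log(\varepsilon)}\right)$ remainder. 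Third---and here your proposed mechanism points in the wrong direction---the condition that makes the Tauberian machinery applicable is not ``cancellation coming from the oscillating factors $e^{-2\pi i m\re(\gamma\tau)}$'' but a \emph{one-sided} bound on the individual terms: the paper expands $\frac{1}{1-e^{2\pi i (z-\gamma\tau)}} - \frac{1}{1-e^{2\pi i (z-\overline{\gamma\tau})}}=\frac{-\pi \im(\gamma\tau)}{\sin^{2}(2\pi(z-\re(\gamma\tau)))}+O_z\!\left(\im(\gamma\tau)^2\right)$, giving coefficients of essentially definite sign to leading order, which is the hypothesis Tauberian theorems with remainder actually need. With these inputs, Theorem 11.1 of \cite{Korevaar} (with $\theta(x)=\frac{x}{\log(x)}$, $\omega(x)=\log(x)$, and condition (11.2) checked via Mellin inversion) delivers exactly the $O(1/\log(\varepsilon))$ error, which is in general optimal. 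Your alternative route---quantitative tail control of the truncated Poincar\'e series $\sum_{\gamma}\im(\gamma\tau)^s e^{-2\pi i m\re(\gamma\tau)}$ uniformly in $m\ge1$---is a substantially harder problem that the proposal neither solves nor reduces to known results.
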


\subsection{The generating function $\mathbb{F}(z,\tau)$}

In Proposition \ref{prop. holom part}, we prove that $z\mapsto\mathbb{F}(z,\tau)$ is the holomorphic part of the biharmonic Maass form
$
	[\frac{\partial}{\partial s}\xi_{0,z}(G_{\bar s}(z,\tau))]_{s=1}.
$
Recall that \emph{biharmonic in weight two} means that the function
is annihilated by $\Delta_{2,z}^2=\left(\xi_{0,z}\circ\xi_{2,z}\right)^2$.
We prove
an explicit representation for $\mathbb{F}(z,\tau)$ in terms of the raising operator.
Specifically, we show:


\begin{theorem}\label{thm. gen series of deriv}
 Assume that $z,\tau \in \mathbb{H}$ and $z\neq \tau$.
Let $L$ denote the Rogers dilogarithm function, defined in \eqref{E:RD}.  Set $u(z,\tau):=\frac{|z-\tau|^2}{4z_2\tau_2}$.  Define
$\beta$ and $P(\tau)$ as in \eqref{eq. KLF infty}, which corresponds to the constant term in the Laurent series expansion
\eqref{KronLimitPArGen} at $s=1$ of the Eisenstein series $E(\tau;s)$.  Then
  $$
  \mathbb{F}(z,\tau)= -R_{0,z}(\mathcal{K}(z,\tau))+\frac{\beta}{z_2}-\frac{P(\tau)}{\operatorname{vol}(\Gamma\backslash \mathbb{H})z_2},
  $$
  where
  \begin{equation}\label{eq:point_pair_series}
  \mathcal{K}(z,\tau):=\sum_{\gamma\in\Gamma} k(u(z,\gamma\tau))
  \end{equation}
  is the automorphic kernel associated to the point pair invariant
  \begin{equation*}
  k(u):=\frac{1}{2\pi}\left(L(-u)+\frac{\pi^2}{6}\right).
  \end{equation*}

\end{theorem}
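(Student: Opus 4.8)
The plan is to differentiate, in the spectral parameter $s$, the relation underlying Theorem \ref{thm: main 1} between the generating function and the resolvent kernel, and then to extract the Rogers dilogarithm from the $s$-derivative of the free-space Green's function. By Proposition \ref{prop. holom part} it suffices to analyze $[\partial_s\xi_{0,z}(G_{\bar s}(z,\tau))]_{s=1}$, and since the resolvent kernel satisfies $\overline{G_{\bar s}}=G_s$ one has $\xi_{0,z}(G_{\bar s})=2i\partial_z\overline{G_{\bar s}}=R_{0,z}(G_s)$. Expanding $G_s$ in its Fourier series at $i\infty$, the nonconstant coefficients carry the Niebur--Poincar\'e series and are holomorphic in $s$ at $s=1$, while the pole of $G_s$ sits entirely in the constant coefficient, which is proportional to $z_2^{1-s}E(\tau;s)$. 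Writing $G_s^{\mathrm{nc}}$ for the nonconstant part, applying $R_{0,z}$ to it reproduces $\mathcal F_s$ (as in the proof of Theorem \ref{thm: main 1}), so that
$$
\mathbb F(z,\tau)=\left[\partial_s\mathcal F_s(z,\tau)\right]_{s=1}=R_{0,z}\!\left(\left[\partial_sG_s^{\mathrm{nc}}(z,\tau)\right]_{s=1}\right),
$$
which is well defined because $G_s^{\mathrm{nc}}$ is holomorphic in $s$ at $s=1$ and $R_{0,z}$ commutes with $\partial_s$.

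The core computation evaluates $[\partial_sg_s(u)]_{s=1}$, where $G_s(z,\tau)=\sum_{\gamma\in\Gamma}g_s(u(z,\gamma\tau))$ and the point-free kernel has the hypergeometric form $g_s(u)=\tfrac{\Gamma(s)^2}{4\pi\Gamma(2s)}u^{-s}\,{}_2F_1(s,s;2s;-u^{-1})$. Differentiating in $s$ at $s=1$, the parameter-derivative of $_2F_1$ yields $2u\big(\log(1+u^{-1})+\mathrm{Li}_2(-u^{-1})\big)$, while the $\Gamma$-factors and $u^{-s}$ contribute logarithmic terms; after applying the inversion formula $\mathrm{Li}_2(-u^{-1})=-\mathrm{Li}_2(-u)-\tfrac{\pi^2}{6}-\tfrac12\log^2u$, the $\log^2u$ terms cancel and the remaining logarithm assembles with $\mathrm{Li}_2(-u)$ into the Rogers normalization, giving the clean identity
$$
\left[\partial_sg_s(u)\right]_{s=1}=-k(u)=-\frac{1}{2\pi}\!\left(L(-u)+\frac{\pi^2}{6}\right).
$$
The constant $\tfrac{\pi^2}{6}$ thus originates directly from the dilogarithm inversion. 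Applying $R_{0,z}$ and summing over $\gamma\in\Gamma$ produces the main term $-R_{0,z}(\mathcal K(z,\tau))$; the hypothesis $z\neq\tau$ keeps the diagonal term regular before raising.

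To obtain the two $z_2^{-1}$ correction terms I would insert the Kronecker limit formula \eqref{KronLimitPArGen} — the Laurent expansion of $E(\tau;s)$ at $s=1$, with residue $1/\vol(\Gamma\backslash\mathbb{H})$ and constant term encoding $\beta$ and $P(\tau)$ via \eqref{eq. KLF infty} — into $[\partial_s(c(s)z_2^{-s}E(\tau;s))]_{s=1}$, where $c(s)=\tfrac{1-s}{1-2s}$ is the factor produced by $R_{0,z}$ acting on the constant Fourier term. Since $c(s)$ vanishes at $s=1$, it pairs with the simple pole of $E(\tau;s)$; expanding $z_2^{-s}=z_2^{-1}(1-(s-1)\log z_2+\cdots)$ then produces finite $z_2^{-1}$ contributions together with a $z_2^{-1}\log z_2$ term. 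The latter must cancel against the matching term coming from the regularization of $\mathcal K$, and after this cancellation and matching the finite constants via \eqref{eq. KLF infty} one is left with exactly $\tfrac{\beta}{z_2}-\tfrac{P(\tau)}{\vol(\Gamma\backslash\mathbb{H})z_2}$.

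The main obstacle is that neither the periodization $\mathcal K=\sum_{\gamma\in\Gamma}k(u(z,\gamma\tau))$ nor the constant Fourier term is individually well behaved at $s=1$: the summands $k(u)=O(u^{-1}\log u)$ decay too slowly for absolute convergence, while the Eisenstein term is polar. The substance of the argument is to show that their difference is precisely the regular nonconstant Fourier derivative $[\partial_sG_s^{\mathrm{nc}}]_{s=1}$, that $R_{0,z}$ renders the surviving kernel convergent (its summands decaying like $u^{-3/2}\log u$), and that the spurious $z_2^{-1}\log z_2$ contributions cancel. Making this rigorous amounts to justifying the interchange of $\partial_s$ with the sum over $\Gamma$ across the pole of $G_s$, controlling the regularization uniformly on the region $z_2>\im(\gamma\tau)$, and matching the finite remainders against the constant term of the Kronecker limit formula; this, rather than the elementary identity $[\partial_sg_s]_{s=1}=-k(u)$, is where the bulk of the technical work will lie.
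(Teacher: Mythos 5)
Your proposal takes a genuinely different route from the paper, which never differentiates the $\Gamma$-sum \eqref{eq. defn Green} term by term: the paper's proof runs through the regularized inner product of Proposition \ref{prop:F_inner_product_firstformula} (resting on the Rohrlich--Jensen-type identity $\langle F_{-n,1},\mathcal G(\cdot,\tau)\rangle=-[\frac{\partial}{\partial s}F_{-n,s}(\tau)]_{s=1}$ from \cite{CJS23}), the heat-kernel representation of $g(z,w)$ and the semigroup property, and only then identifies the resulting kernel through its spectral expansion and the inverse Selberg/Harish--Chandra transform of $h(t)=(\frac14+t^2)^{-2}$, evaluated via $[\frac{\partial}{\partial\nu}Q_\nu]_{\nu=0}$. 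Your shortcut has a genuine gap at its first step: the displayed identity $\mathbb F(z,\tau)=R_{0,z}([\frac{\partial}{\partial s}G_s^{\mathrm{nc}}(z,\tau)]_{s=1})$ is false. The relation $R_{0,z}(G_s^{\mathrm{nc}})=\mathcal F_s$ holds \emph{only at the point} $s=1$ (for $s\neq 1$ the left-hand side is built from $K$-Bessel functions, not exponentials), and an equality valid at a single point cannot be differentiated in $s$. Proposition \ref{prop. holom part} asserts only that $\mathbb F$ is the \emph{holomorphic part} of the $s$-derivative, and the qualifier cannot be dropped: when $\frac{\partial}{\partial s}$ falls on $K_{s-\frac12}(2\pi|n|z_2)$ in \eqref{Fourier exp Green} one picks up the $\mathrm{Ei}$-terms seen in the proof of Proposition \ref{prop. holom part}, which are neither holomorphic nor antiholomorphic in $z$; applying $R_{0,z}$ to them leaves, among other things, the nonzero contribution $\sum_{n\ge1}\frac{F_{-n,1}(\tau)}{2\sqrt n}\,\frac{e^{2\pi i n z}}{z_2}$. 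So the quantity you propose to compute differs from $\mathbb F$ by nontrivial terms that your argument never accounts for.

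The second gap is the core interchange, and it is not the routine technicality your last paragraph suggests. With the paper's normalization \eqref{ks}, the Pfaff transformation gives $k_s=-\frac{\Gamma(s)^2}{4\pi\Gamma(2s)}u^{-s}{}_2F_1\left(s,s;2s;-u^{-1}\right)=-\frac{1}{2\pi}Q_{s-1}(1+2u)$, i.e.\ the \emph{negative} of your $g_s$. Hence, by the same Szmytkowski evaluation \cite{Sm17} used at the end of Section \ref{sec:Proof_of_gen_series_deriv}, the actual summands of \eqref{eq. defn Green} satisfy $[\frac{\partial}{\partial s}k_s(u)]_{s=1}=+k(u)$ (as a check, both sides tend to $\frac{\psi'(1)}{2\pi}=\frac{\pi}{12}$ as $u\to0^+$); your clean identity with $-k(u)$ only arises because your $g_s$ is not the summand of $G_s$. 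Propagating the correct sign, naive termwise differentiation would produce the main term $+R_{0,z}(\mathcal K)$, the opposite of the theorem. This mismatch is the symptom of the real obstruction: \eqref{eq. defn Green} converges only for $\re(s)>1$ and $G_s$ has a pole at $s=1$, so $[\frac{\partial}{\partial s}\,\cdot\,]_{s=1}$, which lives on the analytic continuation, cannot be interchanged with the sum over $\Gamma$; moreover $\sum_{\gamma}k(u(z,\gamma\tau))$ is not even absolutely convergent, since $k(u)\asymp u^{-1}\log u$ while the number of $\gamma$ with $u(z,\gamma\tau)\le X$ grows linearly in $X$, so $\mathcal K$ itself must be understood through its spectral expansion. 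This is precisely why the paper builds the bridge from $\mathbb F$ to the automorphic kernel via regularized inner products, the heat kernel, and Theorem 7.4 of \cite{Iwa02}, rather than via the resolvent series. What survives of your plan is its endgame: your hypergeometric parameter-derivative is the paper's Legendre-function step in disguise, and reading off $\frac{\beta}{z_2}-\frac{P(\tau)}{\vol(\Gamma\backslash\H)z_2}$ from the Kronecker limit formula parallels the paper's evaluation of $\langle1,\mathcal G(\cdot,\tau)\rangle$; but those are the easy parts, and the spectral bridge they hang on is missing from your argument.
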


\begin{remark}\rm
{\it By Appendix A of \cite{za07}, $k$ is a real-analytic function (except at zero and one) satisfying $\lim_{u\to\infty}k(u)=0$. This
is necessary for \eqref{eq:point_pair_series} to converge.
If $\Gamma=\SL_2(\Z)$, then one could alternatively prove \Cref{thm. gen series of deriv} using modular properties and
the growth behavior of the specific functions involved, including the discriminant function.
However, this approach becomes somewhat complicated for general groups because the corresponding Kronecker limit functions are less understood.
Thus, we employ spectral theoretic methods in all cases to prove Theorem \ref{thm. gen series of deriv}.}
\end{remark}


\subsection{Organization of article}
This paper is organized as follows.  In Section \ref{sec:notation}, we establish notation and recall various known
results which are used in our proofs. In Section \ref{sec:Greens_comps}, we obtain certain asymptotic results for automorphic Green's functions in cuspidal regions, which are needed in our proofs of Theorem \ref{thm: main 1}
and Theorem \ref{thm. gen series of deriv}. In Section \ref{sec:Proof_of_Thm1}, we prove Theorem \ref{thm: main 1}, and
in Section \ref{sec:Proof_of_series}, we show Theorem \ref{thm: main 2_1} and Theorem \ref{thm: main 2_2}. In
Section \ref{sec:Inner_Products}, we show that $\mathbb{F}(z,\tau)$ can be realized as a regularized inner product involving
the functions defined in Theorem \ref{thm: main 1} and \ref{G}.  The proof of Theorem \ref{thm. gen series of deriv}
is given in Section \ref{sec:Proof_of_gen_series_deriv}.

\section*{Acknowledgements}
The first author has received funding from the European Research Council (ERC) under the European Union’s Horizon 2020
research and innovation programme (grant agreement No. 101001179). The second author acknowledges grant support
from PSC-CUNY Award 67415-00-55, which was jointly funded by the Professional Staff Congress and The City University of New York.
The authors thank CIRM where we met during the Building Bridges 6 (BB6) conference in September 2024. There the authors began developing the results presented in this article.

\section{Preliminaries}\label{sec:notation}

\subsection{The Rogers dilogarithm}\label{sec. rogers dilog}

For $w\in\C$ with $|w|<1$ and $s\in\C$, the {\it polylogarithm} is
\begin{equation}\label{pol}
	\Li_{s}(w) := \sum_{n\ge1} \frac{w^n}{n^{s}}.
\end{equation}
Note that
$$
\Li_{0}(w) = \frac{w}{1-w}
\,\,\,\,\,
\text{\rm and}
\,\,\,\,\,
\Li_{1}(w) =-\Log(1-w).
$$
If $s = 2$, then one calls $\Li_2(w)$ the {\it dilogarithm}.  One can prove that
a single-valued analytic continuation of $\Li_2(w)$ exists to the split plane $\C\setminus[1,\infty)$.
Furthermore, one has
(see 25.12.3 of \cite{DLMF})
\begin{equation*}
	\Li_2 (w)+\Li_2 \left(\frac{w}{w-1}\right) = -\frac{1}{2} \Log^{2}(1-w)
\,\,\,\,\text{\rm for} \,\,\,\, w\in \C \setminus[1,\infty).
\end{equation*}
We let, for $x\in(0,1)$,
\begin{equation}\label{E:RD}
	L(x):=\mathrm{Li}_2(x)+\frac{1}{2}\log(x)\log(1-x).
\end{equation}
It can be extended to $\mathbb{R}$ by setting $L(0):=0$, $L(1):=\frac{\pi^2}6$, and
$$
L(x):=
\begin{cases}
	2L(1)-L(x) & \text{if $x>1$}, \\
	-L\left(\frac x{x-1}\right) & \text{if $x<0$},
\end{cases}
$$
see \cite[Section A]{za07} for these and other basic facts. A direct calculation shows the following lemma.

\begin{lemma}\label{lem. dilog}
	For $u\in\R^+$, we have
	$$
	L(-u)=\mathrm{Li}_2\left(-u\right)+\frac{1}{2}\log(u) \log(u+1).
	$$
\end{lemma}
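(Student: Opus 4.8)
The plan is to reduce everything to the reflection identity for the dilogarithm stated above, applied at $w=-u$. Since $u\in\R^{+}$, the argument $-u$ is negative, so by the definition of the extension of $L$ to negative reals I have
\[
L(-u)=-L\!\left(\frac{-u}{-u-1}\right)=-L\!\left(\frac{u}{u+1}\right).
\]
As $\frac{u}{u+1}\in(0,1)$, I may now unfold the defining formula \eqref{E:RD} at this point, using $1-\frac{u}{u+1}=\frac{1}{u+1}$:
\[
L\!\left(\frac{u}{u+1}\right)=\Li_2\!\left(\frac{u}{u+1}\right)+\tfrac12\log\!\left(\frac{u}{u+1}\right)\log\!\left(\frac{1}{u+1}\right).
\]

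Next I would eliminate the dilogarithm at $\frac{u}{u+1}$ in favor of $\Li_2(-u)$. Setting $w=-u$ in the functional equation displayed earlier gives $\frac{w}{w-1}=\frac{u}{u+1}$ and $1-w=1+u$, hence
\[
\Li_2\!\left(\frac{u}{u+1}\right)=-\tfrac12\log^{2}(1+u)-\Li_2(-u),
\]
where the branch issue is harmless because $1+u>0$ is real, so $\Log(1+u)=\log(1+u)$. Substituting this into the previous display and combining the elementary logarithmic terms via $\log(\frac{u}{u+1})=\log u-\log(u+1)$ and $\log(\frac{1}{u+1})=-\log(u+1)$ yields, after the two $\tfrac12\log^{2}(u+1)$ contributions cancel,
\[
L(-u)=\Li_2(-u)+\tfrac12\log(u)\log(u+1),
\]
which is the claim.

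There is no genuine obstacle here; the statement is flagged as ``a direct calculation'' precisely because the argument is a short unfolding. The only two points requiring a moment of care are verifying that $\frac{u}{u+1}$ lands in the interval $(0,1)$, where the defining formula \eqref{E:RD} is valid, and confirming that the principal branch of the logarithm in the functional equation collapses to the ordinary real logarithm since $1+u$ is a positive real. Everything else is bookkeeping of the $\log^{2}(u+1)$ terms, which cancel identically.
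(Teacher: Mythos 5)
Your proof is correct and is precisely the ``direct calculation'' the paper has in mind: you unfold the extension $L(-u)=-L\bigl(\tfrac{u}{u+1}\bigr)$, apply the defining formula \eqref{E:RD} on $(0,1)$, and eliminate $\Li_2\bigl(\tfrac{u}{u+1}\bigr)$ via the functional equation quoted from 25.12.3 of \cite{DLMF} at $w=-u$, with the $\tfrac12\log^2(u+1)$ terms cancelling exactly as they should. Your attention to the branch point (that $\Log(1+u)=\log(1+u)$ for $u>0$) and to $\tfrac{u}{u+1}\in(0,1)$ covers the only delicate points.
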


\subsection{Basic notation}\hspace{0cm}\label{sec. Basic}
Let $\Gamma\subset\PSL_2(\R)$ be a Fuchsian
group of the first kind acting by fractional
linear transformations on the upper half-plane $\mathbb{H}$.
Set $M:=\Gamma\backslash\mathbb{H}$, which is a finite
volume hyperbolic Riemann surface.  We let $ds^2:=\frac{dz_1^2+dz_2^2}{z_2^2}$ be the {\it hyperbolic line element} in the coordinate
$z=z_1+iz_2\in\mathbb{H}$.  The line element induces a metric, called the {\it hyperbolic metric}, on $M$ which is compatible with its complex structure.
The hyperbolic metric has constant negative curvature $-1$.
Let $d\mu= \mu(z):=\frac{dz_1 dz_2}{z_2^2}$ denote the associated {\it hyperbolic volume element} on $M$. The Riemann surface $M$ may have elliptic fixed points.  We assume that $M$ has at least a cusp, say $i\infty$, which is assumed to have width one.
Slightly abusing notation, we also say that $\Gamma$ has a cusp $i\infty$ of width one, meaning that the stabilizer $\Gamma_\infty$ of $i\infty$
is generated by the matrix $\bigl(\begin{smallmatrix}
1&1\\0&1\end{smallmatrix}\bigr)$. We identify $M$
locally with its universal cover $\mathbb{H}$. We denote by $\mathfrak F=\mathfrak F(\Gamma)$ the ``usual'' (Ford) fundamental domain for $\Gamma$ acting
on $\mathbb H$.
We assume that $\Gamma$ has $K\geq 0$ cusps other than $i\infty$, which we denote, for $\ell\in\{1,\ldots,K\}$, by $\varrho_\ell$.  Let $\Gamma_{\varrho_\ell}:=\{\gamma\in\Gamma : \gamma\varrho_\ell=\varrho_\ell\} =\langle\gamma_{\varrho_\ell}\rangle$ denote the \emph{stabilizer group} of $\varrho_\ell$. The matrices $\sigma_\ell$ such that for $\ell\in\{1,\ldots, K\}$ satisfying $\sigma_\ell i\infty=\varrho_\ell$ and $\sigma_\ell^{-1}\gamma_\ell\sigma_\ell= \bigl(\begin{smallmatrix}
1&1\\0&1\end{smallmatrix}\bigr)$ are the {\it scaling matrices} of the cusps $\varrho_\ell$.

\subsection{Harmonic Maass forms and polar harmonic Maass forms}\label{S:Maass}
{\it Harmonic Maass forms} of weight $k\in \mathbb{Z}$ for $\Gamma$ are real-analytic functions that transform modular of weight $k$ on $\Gamma$, have at most linear exponential growth at the cusps, and are annihilated by
\begin{equation*}
\Delta_{k,\tau} := -\tau_2^2\left(\frac{\partial^2}{\partial\tau_1^2}+\frac{\partial^2}{\partial\tau_2^2}\right) + ik\tau_2\left(\frac\partial{\partial\tau_1}+i\frac\partial{\partial\tau_2}\right),
\end{equation*}
the {\it weight $k$ hyperbolic Laplace operator}. Note that $\Delta_{k,\tau}=-\xi_{2-k,\tau}\circ \xi_{k,\tau}$. {\it Polar harmonic Maass forms} may have poles in the upper half-plane and, for all $z\in\H$, there exists $n\in\N_0$ such that $(\tau-z)^nf(\tau)$ is bounded in some neighborhood of $z$. The {\it polar part}  of $f$ in $z$ is given via
\begin{equation*}
	f(\tau)=\sum_{j=0}^{n_0}\frac{a_j(\overline \tau)}{(\tau-z)^j}+f^{*}(\tau)
\end{equation*}
where $f^{*}$ is bounded in a neighbourhood of $z$. Moreover, $f$ is allowed to have most linear exponential growth at $i\infty$. Analogue conditions are required at the other cusps of $\Gamma$.

\subsection{Niebur--Poincar\'e series}
For $m$ and $\tau$ fixed, \eqref{eq. N-P series defn} converges absolutely and uniformly on any compact subset of the half-plane
$\re(s)>1$. Moreover, $\Delta_0(F_{m,s}) = s(1-s) F_{m,s}$. From Theorem 5 of \cite{Ni73}, the function $F_{m,s}$ has a meromorphic continuation to the whole complex $s$-plane.
It is holomorphic at $s=1$ which follows from its Fourier expansion.  Namely, for $\re(s)>1$, we have
\begin{equation*}
F_{m,s}(\tau)=\sqrt{\tau_2}I_{s-\frac12}(2\pi |m|\tau_2)e^{2\pi im \tau_1} + \sum_{n\in\Z}b_{m,s}(\tau_2;n)e^{2\pi i n \tau_1},
\end{equation*}
where\footnote{Note that we correct some typos in Theorem 1 of \cite{Ni73}.}
$$
b_{m,s}(\tau_2;n)= B_{m,s}(n)\begin{cases}
	\frac{\tau_2^{1-s}}{2s-1}&\text{if }n=0,\\
	\sqrt{\tau_2}K_{s-\frac12}(2\pi |n|\tau_2)&\text{if }n\neq0.
\end{cases}
$$
Here, $I_{\kappa}$ and $K_{\kappa}$ are the modified Bessel functions of order $\kappa$.  Furthermore,
\begin{align*}
	B_{m,s}(0)&:= \frac{2\pi^s}{\Gamma(s)}|m|^{s-\frac{1}{2}}\sum_{c\ge1}\frac{S(m,0;c)}{c^{2s}},\\
	B_{m,s}(n)&:= 2 \sum_{c\ge1}\frac{S(m,n;c)}c
	\begin{cases}
		J_{2s-1}\left(\frac{4\pi}{c} \sqrt{mn}\right) & \textrm{\rm if \,}mn>0, \\[-0.3cm]\\
		I_{2s-1}\left(\frac{4\pi}{c} \sqrt{|mn|}\right) & \textrm{\rm if \,} mn<0,
	\end{cases}
\end{align*}
 where $J_{\kappa}$ denotes the $J$-Bessel function of order $\kappa$ and $S(m,n;c)$ is the Kloosterman sum
$$
S(m,n;c):= \sum_{\bigl(\begin{smallmatrix}
a&\ast\\c&d\end{smallmatrix}\bigr)\in \Gamma_\infty\backslash\Gamma/\Gamma_\infty }e^{\frac{2\pi i}{c}(ma+nd)}.
$$
Using that
\begin{equation}\label{eq:Bessel_identities1}
I_\frac12(x)=\sqrt\frac{2}{\pi x} \sinh(x)
\,\,\,\,\,
\text{\rm and}
\,\,\,\,\,
K_\frac12(x)=\sqrt\frac\pi{2x} e^{-x},
\end{equation}
we have that
\begin{equation*}
F_{m,1}(\tau)=\frac{\sinh(2\pi|m|\tau_2)}{\pi \sqrt{|m|}}e^{2\pi im\tau_1} + B_{m,1}(0)+ \frac12 \sum_{n\in\Z\setminus\{0\}} \frac{B_{m,1}(n)}{\sqrt{|n|}} e^{-2\pi|n|\tau_2+2\pi in\tau_1}.
\end{equation*}
From this, it follows that $F_{m,1}$ is annihilated by $\Delta_0$. Moreover, we have
\begin{equation}\label{eq. NP deriv bound0}
	\left|F_{-m,1}(\tau) \right| \ll q^{-m} \quad\text{as }\tau_2\to\infty.
\end{equation}
Similarly, using 10.38.6, 6,12,1, and 6.12.2 of \cite{DLMF}, one can show that
\begin{equation}\label{eq. NP deriv bound}
\left[\frac{\partial}{\partial s}F_{-m,s}(\tau)\right]_{s=1} \ll \frac{q^{-m}}{\tau_2} \quad \text{as $\tau_2\to\infty$}.
\end{equation}

We next turn to the Niebur--Poincar\'e series associated to the other cusps. Let $K$ denote the number of cusps of $\Gamma$ different from $i\infty$. If $K>0$, then, for $\ell\in\{1,\ldots K\}$, the {\it Niebur--Poincar\'e series associated to} $\varrho_\ell$ is defined, for $\re(s)>1$, as absolutely convergent series (see e.g. Definition 3.2 of \cite{Ne73})
\begin{equation}\label{eq. NP series gen cusp}
F_{m,s,\varrho_\ell}(\tau):= \sum_{\gamma\in\Gamma_{\varrho_\ell} \backslash \Gamma} \sqrt{\im\left(\sigma_\ell^{-1}\gamma \tau\right)}I_{s-\frac12}\left(2\pi |m| \im\left(\sigma_\ell^{-1}\gamma \tau\right)\right) e^{2\pi im \re\left(\sigma_{\ell}^{-1}\gamma \tau\right)}.
\end{equation}
The function $F_{m,s,\varrho_\ell}$ has a meromorphic continuation to the entire $s$-plane which is holomorphic at $s=1$ and is a real-analytic eigenfunction of $\Delta_0$ with eigenvalue $s(1-s)$. If $K\geq 1$, then, by Lemma 4.2 of \cite{Ne73}, if $\tau$ is sufficiently close to the cusp $\varrho_\ell$ (i.e., $\im(\sigma_\ell^{-1} \tau)\gg1$)
and $\re(s)\geq 1$, then
\begin{equation}\label{eq. bound for NP at other cusps}
|F_{-m,s}(\tau)|\leq C(2\pi m)^{\re(s)-1}e^{2\pi m}\im\left(\sigma_\ell^{-1} \tau\right)^{1-\re(s)},
\end{equation}
where, the constant $C$ depends solely on $\Gamma$.

Let $P(Y):=\{w=w_1+iw_1: 0<w_1<1,\, w_2\geq Y\}$ denote a \emph{horocycle} on $M$. By considering
the Fourier expansion of $F_{-m,1,s}(\tau)$ at the cusp $\varrho_\ell$, given by (4.29) in \cite{Ne73}, one obtains\footnote{Recall that $\frac{\partial}{\partial\mathrm n}g(\sigma_{\ell}\tau):=[\frac{\partial}{\partial \tau_2}g(\sigma_{\ell}\tau)]_{\tau_2=Y}$ if $\tau=\tau_1+iY \in P(Y)$, see e.g. proof of Theorem 6.14 in \cite{Iwa02}.}
\begin{equation}\label{eq. bound for norm der NP at other cusps}
\bigg|\frac{\partial}{\partial\mathrm n}F_{-m,1}(\tau)\bigg|\leq \frac{Ce^{2\pi m}}{\sqrt{2\pi m}Y}
\,\,\text{\rm for} \,\, \tau\in\sigma_\ell P(Y) \,\, \text{\rm as}\,\, Y:=\im\left(\sigma_\ell^{-1} \tau\right)\to \infty.
\end{equation}
By using the Fourier expansions of \eqref{eq. NP series gen cusp} with $m\in-\mathbb N$ and by arguing in the same way as
in the proof of Lemma 4.2 of \cite{Ne73} and Lemma 3 of \cite{CJS23}, one can show, as $\im(\sigma_\ell^{-1}\tau)\to \infty$,
\begin{equation}\label{eq. NP growth at other cusps}
F_{-m,1,\varrho_\ell}(\tau)\ll e^{2\pi m \im\left(\sigma_\ell^{-1}\tau\right)} \quad \text{and} \quad \left[\frac{\partial}{\partial s}F_{-m,s,\varrho_\ell}(\tau)\right]_{s=1}\ll \frac{e^{2\pi m \im\left(\sigma_\ell^{-1}\tau\right)}}{\sqrt{\im \left(\sigma_\ell^{-1}\tau\right)}}.
\end{equation}

\subsection{Parabolic Eisenstein series}\label{sec. parab Eis}

The {\it non-holomorphic, parabolic Eisenstein series} associated to the cusp $i\infty$ is defined, for $s\in\C$ with $\re(s)>1$, by
\begin{equation}\label{eq:para_eisen}
E(\tau;s):=\sum_{\gamma\in\Gamma_\infty\setminus\Gamma}\im(\gamma \tau)^s.
\end{equation}
It has a meromorphic continuation to the entire $s$-plane with a simple pole at $s=1$.  As $s\to1$, $E(\tau;s)$ has the Laurent expansion given
by\footnote{This
 follows from Theorem 3.1 of \cite{Go73}, after trivial renormalization and generalization (see also Subsection 3.1. of \cite{JO'SS20} where
 minor inconsistencies in \cite{Go73} were rectified).}
\begin{equation} \label{KronLimitPArGen}
	E(\tau;s)= \frac{1}{\vol(\Gamma\backslash\H) (s-1)} + \beta- \frac{\log\left(\left|\eta_{i\infty}(\tau)\right|^4 \tau_2\right)}{\vol(\Gamma\backslash\H)}  + P_1(\tau)(s-1)+ O\left((s-1)^2\right),
\end{equation}
where $\beta=\beta_{\Gamma}$ is a certain real constant depending only on the group $\Gamma$ and $\eta_{i\infty}$ is a certain function transforming modular of weight $\frac{1}{2}$ for $\Gamma$. The function $\tau\mapsto E(\tau;s)$ is an eigenfunction of $\Delta_0$ with eigenvalue $s(1-s)$. The {\it parabolic Kronecker limit function} for $\Gamma$ is defined as
\begin{equation} \label{eq. KLF infty}
P(\tau) := \log\left(\left|\eta_{i\infty}(\tau)\right|^4 \tau_2\right).
\end{equation}
If $K\geq 1$, then, for $\ell\in\{1,\ldots,K\}$, the {\it Eisenstein series associated to the cusp} $\varrho_\ell$ is defined,
$$
E_{\varrho_\ell}(\tau;s) := \sum_{\gamma\in\Gamma_{\varrho_\ell}\setminus\Gamma}\im\left(\sigma_\ell^{-1}\gamma \tau\right)^s \qquad (s\in\C,~ \re(s)>1).
$$
It has a meromorphic continuation to the complex $s$-plane with a simple pole at $s=1$ with residue  $\vol(\Gamma\backslash\mathbb{H})^{-1}$.
Moreover, the analysis used in Subsection 3.1 of \cite{JO'SS20} yields that for any pair of cusps $\varrho_\ell, \varrho_j$
($j,\ell\in\{0,1,\ldots, K\}$, $\varrho_0:=i\infty$, $\sigma_0 := \mathrm{I_2}$) there exists a constant
$\beta_{\ell,j}=\beta_{\ell,j}(\Gamma)$, depending only on $\Gamma$ and a holomorphic form $\eta_{\varrho_{\ell},\varrho_j}$, such that, as $s\rightarrow 1$,
\begin{equation} \label{eq. KronLimitPAr cusps}
	E_{\varrho_\ell}(\sigma_j\tau;s)
	= \frac{1}{\vol(\Gamma\backslash\H)(s-1)} + \beta_{\ell,j}- \frac{P_{\varrho_{\ell},\varrho_j}(\tau)}{\vol(\Gamma\backslash\H)}
 + P_{\ell,j,1}(\tau)(s-1)+ O\left((s-1)^2\right),
\end{equation}
where the {\it parabolic Kronecker limit function} of $\Gamma$ \textit{at the pair of cusps $\varrho_\ell, \varrho_j$} is defined as\footnote{Note that $P(\tau)= P_{i\infty,i\infty}(\tau)$.}
$$
P_{\varrho_{\ell},\varrho_j}(\tau) := \log\left(\left|\eta_{\varrho_{\ell},\varrho_j}(\tau)\right|^4 \tau_2\right).
$$

The Fourier expansion of the Eisenstein series $E_{\varrho_\ell}(\sigma _j \tau;s)$
at a pair of cusps $\varrho_\ell, \varrho_j$ ($j,\ell\in\{0,\ldots,K\}$) is, by Theorem 3.4 of \cite{Iwa02}, for $\re(s)>1$, given by
$$
E_{\varrho_\ell}(\sigma _j \tau;s)=\delta_{\ell=j}\tau_2^s + \varphi_{\ell, j}(s)\tau_2^{1-s}+ 2\sqrt{\tau_2} \sum_{n\in\Z\setminus\{0\}}\varphi_{\ell, j}(s,n)K_{s-\frac12}(2\pi|n|\tau_2)e^{2\pi i n \tau_1}.
$$
Here we employ the notation $\delta_{\mathcal S}:=1$ if a statement $\mathcal S$ is true and $0$ otherwise.  Furthermore,
$$
\varphi_{\ell, j}(s):=\sqrt{\pi}\frac{\Gamma\!\left(s-\frac12\right)}{\Gamma(s)}\sum_{c\ge1}\frac{S_{\ell, j}(0,0;c)}{c^{2s}}
\,\,\,\,\text{\rm and}\,\,\,\,
\varphi_{\ell, j}(s,n)=\pi^s\frac{|n|^{s-1}}{\Gamma(s)}\sum_{c\ge1}\frac{S_{\ell, j}(0,n;c)}{c^{2s}},
$$
where the Kloosterman sums associated to the cusps $\varrho_\ell, \varrho_j$ ($j,\ell\in\{0,\ldots,K\}$) are
$$
S_{\ell, j}(m,n;c) := \sum_{\bigl(\begin{smallmatrix}
a&\ast\\c&d\end{smallmatrix}\bigr)\in \Gamma_\infty\backslash\sigma_\ell^{-1}\Gamma\sigma_j/\Gamma_\infty }e^{\frac{2\pi i}{c}(ma+nd)}.
$$
Note that $S_{0,0}(m,n;c)=S(m,n;c)$.
From the Fourier expansion of $E_{\varrho_\ell}(\sigma _j \tau;s)$ one can deduce that $P_{\varrho_{\ell},\varrho_j}(\tau)$ and $P_{1,\ell,j}(\tau)$ ($j,\ell\in\{0,\ldots,K\}$) have at most
linear growth\footnote{The growth is even at most logarithmic (see (3.2)--(3.4) on p. 13 of \cite{JO'SS20}), but for our purposes subexponential growth suffices.} in $\tau_2$, as $\tau_2\to \infty$.


\subsection{Automorphic Green's function} \label{sec. aut green}
Define the {\it automorphic Green's function}\footnote{The automorphic Green function from \cite{CJS23} follows \cite{He83}, p.31 and it differs from the Green's function in Chapter 5 of \cite{Iwa02} and from the function $G_1(z,\tau,s)$ from \cite[Section 5]{Ne73} by a minus sign. A different normalisation was also used in \cite{BK}, where the Green's function equals $-4\pi G_s(z,\tau)$ or in \cite{BKvP19}, where the Green's function equals $4\pi G_s(z,\tau)$.}
\begin{equation}\label{eq. defn Green}
G_s(z,\tau):=\sum_{\gamma\in\Gamma} k_s(\gamma z,\tau)\quad(z, \tau\in\H, \,s\in\C,\,\re(s)>1)
\end{equation}
where
\begin{equation}\label{ks}
	k_s(z,\tau) := -\frac{\Gamma^2(s)}{4\pi\Gamma(2s)} \left(1-\frac{|z-\tau|^2}{|z-\overline\tau|^2}\right)^s {}_2F_1\left(s,s;2s;1-\frac{|z-\tau|^2}{|z-\overline\tau|^2}\right).
\end{equation}
Here ${}_2F_1$ is the usual hypergeometric function. The Green's function is an eigenfunction of $\Delta_0$ in $z$ and $\tau$
with eigenvalue $s(1-s)$. Also, \eqref{eq. defn Green} is well-defined for $z\in \mathbb{H}$ satisfying $z\neq \gamma \tau$ for all $\gamma\in\Gamma$ and
$G_s(z,\tau)$ has a logarithmic singularity as $z\to \tau$  given by (see\footnote{Note the different sign, due to our different normalization.} \cite[Chapter 5]{Iwa02})

\begin{equation}\label{eq. greens function log singul expr}
G_s(z,\tau)=\frac{\mathrm{Stab}_\tau}{2\pi}\log|z-\tau|+ O(1).
\end{equation}

\subsection{A Tauberian theorem}\label{subsection:Tauberian}

We rewrite \eqref{eq:para_eisen} as Dirichlet series
$$
E(z;s)=\sum_{\gamma\in\Gamma_\infty\setminus\Gamma}\left(\frac{1}{\im(\gamma z)}\right)^{-s}.
$$
One can apply general Karmata Tauberian theorems
to conclude that
\begin{equation*}
\sum_{\genfrac{}{}{0pt}{}{\gamma\in\Gamma_\infty\setminus\Gamma}{\im(\gamma z) > \varepsilon}} 1 =
\frac{\vol(\Gamma\backslash\H)}{\varepsilon} + o\left(\varepsilon^{-1}\right)
\,\,\,\,\,
\text{\rm as $\varepsilon \rightarrow 0$},
\end{equation*}
see, for example, page 27 of \cite{JL94} and references therein. On page 28 of \cite{JL94} it was shown that
by indexing $\frac{1}{\im(\gamma z)}=\{\lambda_{n}\}$, we have $\lambda_{n} \sim \frac{n}{\vol(\Gamma\backslash \H)}$,
as $n \rightarrow \infty$. Thus, the two theta functions
$$
\theta_{1}(t):=\sum_{\im(\gamma z)} e^{-\frac{t}{\im(\gamma z)}}
\,\,\,\,\,
\text{\rm and}
\,\,\,\,\,
\theta_{2}(t):=\sum_{n\ge1} e^{-\frac{nt}{\vol(\Gamma\backslash\H)}}
$$
have the same asymptotic main term as $t \rightarrow 0^{+}$.
As $\theta_{2}$ is a geometric series, it can be evaluated and, in particular, its
asymptotic behavior as $t\rightarrow 0^{+}$ is known.  With this, Corollary 2.4 of \cite{SS13} applies
so that the error term can be obtained from the analogous error term from the
counting function associated to $\theta_{2}$.  Specifically, one gets that
\begin{equation}\label{eq:counting2_Eisen}
\sum_{\genfrac{}{}{0pt}{}{\gamma\in\Gamma_\infty\setminus\Gamma}{\im(\gamma z) > \varepsilon}} 1 =
\frac{\vol(\Gamma\backslash\H)}{\varepsilon} + O\left(\frac{1}{\varepsilon \log (\varepsilon)}\right)
\,\,\,\,\,
\text{\rm as $\varepsilon \rightarrow 0$},
\end{equation}
see, for example, page 95 of \cite{BGV04}.  So, as discussed in the beginning of Chapter VII
of \cite{Korevaar}, one obtains an error term which, in general, is optimal.

\section{Computations involving Green's functions}\label{sec:Greens_comps}

\subsection{Certain bounds in cuspidal regions}
The Fourier expansion of $G_s(z,\tau)$ is described in the following lemma\footnote{Note that $G_s(z,\tau)=-\frac{\Gamma^2(s)}{8\pi\Gamma(2s)}G(\tau,z,s)$ in the notation of \cite{Ne73}.},
which is an immediate consequence of (4.56) and Satz 4.2 from \cite{Ne73}.
\begin{lemma}\label{fourier expansion}
	For $z$, $\tau\in \mathfrak F$  with $z_2> \max\{\im(\gamma \tau): \gamma\in \Gamma\}$, we have, for $\re(s)>1$,
	\begin{equation}\label{Fourier exp Green}
		G_s(z,\tau) =\frac{z_2^{1-s}}{1-2s} E(\tau;s)-\sqrt{z_2}\sum_{n\in\Z\setminus \{0\}} F_{-n,s}(\tau) K_{s-\frac12}(2\pi |n| z_2) e^{2\pi inz_1}.
	\end{equation}
If  $K\geq 1$, then for $\ell\in\{1,\ldots,K\}$, $\tau \in \H$, $z\in \mathfrak F$ with $\im(\sigma_\ell^{-1} z)>\im(\sigma_\ell^{-1} \gamma \tau)$, the Fourier expansion at the cusp $\varrho_\ell$ is given by
\begin{multline}\label{eq. Four exp Green at cusps}
G_s(z,\tau)=\frac{\im\left(\sigma_\ell^{-1} z\right)^{1-s}}{1-2s}E_{\varrho_\ell}(\tau;s)  
\\- \sqrt{\im\left(\sigma_\ell^{-1} z\right)} \sum_{n\in \Z\setminus\{0\}}F_{-n,s,\varrho_\ell}(\tau)K_{s-\frac12}\left(2\pi |n|\im\left(\sigma_\ell^{-1} z\right)\right)e^{2\pi i n\re\left(\sigma_\ell^{-1} z\right)}.
\end{multline}
\end{lemma}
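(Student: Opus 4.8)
The plan is to exploit that $z\mapsto G_s(z,\tau)$ is $\Gamma$-invariant, hence $1$-periodic in $z_1$ (as $i\infty$ has width one), smooth in the region $z_2>\max_{\gamma}\im(\gamma\tau)$ where there is no pole, and an eigenfunction of $\Delta_{0,z}$ with eigenvalue $s(1-s)$. The engine of the proof is to reorganize the defining sum by cosets and then apply Poisson summation. Since $k_s$ is a point-pair invariant, one has $G_s(z,\tau)=\sum_{\gamma\in\Gamma}k_s(z,\gamma\tau)$, and splitting $\gamma=\delta\gamma_0$ with $\delta\in\Gamma_\infty$ and $\gamma_0\in\Gamma_\infty\backslash\Gamma$ gives $\delta\gamma_0\tau=\gamma_0\tau+n$, so that
$$
G_s(z,\tau)=\sum_{\gamma_0\in\Gamma_\infty\backslash\Gamma}\ \sum_{n\in\Z}k_s(z,\gamma_0\tau+n).
$$
For each fixed coset representative the inner sum is a $\Gamma_\infty$-periodization of $k_s$ in the real part, to which I would apply Poisson summation in $n$.

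Poisson summation turns each inner sum into $\sum_{m\in\Z}\Phi_m\big(z_2,\im(\gamma_0\tau)\big)\,e^{2\pi im(\re(\gamma_0\tau)-z_1)}$, where $\Phi_m(z_2,w_2):=\int_\R k_s\!\left(iz_2,v+iw_2\right)e^{-2\pi imv}\,dv$ is the Fourier transform of the point-pair invariant in the real part (even in $v$, hence real-indexed cleanly). The decisive computation is to evaluate $\Phi_m$. Rather than integrating the hypergeometric representation \eqref{ks} directly, I would identify $\Phi_m(\cdot,w_2)$ as the one-dimensional Green's function of the ODE satisfied by the Fourier coefficients of a $\Delta_0$-eigenfunction: it solves the Bessel-type equation away from $z_2=w_2$, decays as $z_2\to\infty$, and has a prescribed jump in its derivative at $z_2=w_2$ coming from the source of the resolvent kernel. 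This pins down, for $z_2>w_2$ and $m\neq0$,
$$
\Phi_m(z_2,w_2)=-\sqrt{z_2w_2}\,I_{s-\frac12}(2\pi|m|w_2)\,K_{s-\frac12}(2\pi|m|z_2),
$$
while for $m=0$ one gets $\Phi_0(z_2,w_2)=\tfrac{1}{1-2s}\,w_2^s z_2^{1-s}$, with the normalizing constants fixed by matching against \eqref{ks}. Substituting back and summing over $\gamma_0$, the factors $\sqrt{\im(\gamma_0\tau)}\,I_{s-\frac12}(2\pi|m|\im(\gamma_0\tau))e^{2\pi im\re(\gamma_0\tau)}$ assemble, after setting $n=-m$, into $F_{-n,s}(\tau)$ by \eqref{eq. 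N-P series defn}, and the $m=0$ term assembles into $\tfrac{z_2^{1-s}}{1-2s}E(\tau;s)$ by \eqref{eq:para_eisen}; this yields \eqref{Fourier exp Green}. The hypothesis $z_2>\max_{\gamma}\im(\gamma\tau)$ is exactly what guarantees that $z_2>w_2$ holds for every coset, so the same branch of $\Phi_m$ is used throughout, and (together with $\re(s)>1$) it justifies all interchanges of summation and integration by absolute convergence.

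The expansion at a cusp $\varrho_\ell$ follows the same template after conjugating by the scaling matrix. Writing $z=\sigma_\ell\zeta$ and using $k_s(\sigma_\ell\zeta,\gamma\tau)=k_s(\zeta,\sigma_\ell^{-1}\gamma\tau)$ together with $\Gamma_{\varrho_\ell}=\sigma_\ell\Gamma_\infty\sigma_\ell^{-1}$, the sum reorganizes into a $\Gamma_\infty$-periodization in the variable $\sigma_\ell^{-1}\gamma\tau$ over $\gamma_0\in\Gamma_{\varrho_\ell}\backslash\Gamma$. The identical Poisson computation then produces $F_{-n,s,\varrho_\ell}(\tau)$ and $E_{\varrho_\ell}(\tau;s)$, giving \eqref{eq. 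Four exp Green at cusps}.

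I expect the main obstacle to be the explicit evaluation of $\Phi_m$ — the Fourier transform of the point-pair invariant — together with the bookkeeping of the normalizing constants, including the sign producing $\tfrac{1}{1-2s}$ rather than $\tfrac{1}{2s-1}$; the ODE-Green's-function viewpoint is what makes this tractable and sidesteps a direct hypergeometric integral. A secondary, routine point is justifying the Poisson summation and the interchange of the $m$- and $\gamma_0$-sums, which rests on the absolute convergence available for $\re(s)>1$ in the stated region. As the authors indicate, all of this is essentially contained in (4.56) and Satz~4.2 of \cite{Ne73}.
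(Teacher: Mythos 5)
Your proposal is correct, but it differs from the paper in an important structural way: the paper does not prove this lemma internally at all. Its entire ``proof'' is the citation preceding the statement, namely that the two expansions are immediate consequences of (4.56) and Satz 4.2 of \cite{Ne73}, with the translation of normalizations handled in a footnote via $G_s(z,\tau)=-\frac{\Gamma^2(s)}{8\pi\Gamma(2s)}G(\tau,z,s)$ in Neunh\"offer's notation. What you have written is, in substance, the derivation that the cited reference carries out: unfold $G_s(z,\tau)=\sum_{\gamma}k_s(z,\gamma\tau)$ over $\Gamma_\infty\backslash\Gamma$ (legitimate because $k_s$ in \eqref{ks} is a point-pair invariant), apply Poisson summation to each $\Gamma_\infty$-periodization, and evaluate the horocyclic Fourier transform $\Phi_m$ by recognizing it as the Green's function of the Bessel-type ODE satisfied by Fourier coefficients of $\Delta_0$-eigenfunctions; the constants $-1$ and $\frac{1}{1-2s}$ then come from the Wronskians $I_\nu(x)K_\nu'(x)-I_\nu'(x)K_\nu(x)=-\frac1x$ and $W\!\left(y^s,y^{1-s}\right)=1-2s$, with the overall source normalization fixed by the logarithmic singularity \eqref{eq. greens function log singul expr}. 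Your reduction of the cusp case via $k_s(\sigma_\ell\zeta,\gamma\tau)=k_s(\zeta,\sigma_\ell^{-1}\gamma\tau)$ and $\Gamma_{\varrho_\ell}=\sigma_\ell\Gamma_\infty\sigma_\ell^{-1}$ is likewise the right one and produces exactly \eqref{eq. Four exp Green at cusps}. One small gap in your sketch of the ODE argument: decay as $z_2\to\infty$ plus the jump condition do not by themselves determine $\Phi_m$; you also need the recessive behavior as $z_2\to 0^+$ (which follows from $k_s(u)\ll u^{-\re(s)}$ as $u\to\infty$) to select the $I$-Bessel solution below the diagonal --- your displayed formula for $\Phi_m$ is nonetheless correct. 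As for what each route buys: the paper's citation spares it precisely the constant-matching bookkeeping you identify as the main obstacle, while your derivation is self-contained, independent of Neunh\"offer's conventions, and makes the extension to the cusps $\varrho_\ell$ transparent rather than requiring a second appeal to the literature.
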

An immediate consequence of Lemma \ref{fourier expansion} is the following corollary.

\begin{corollary}\label{cor. green's bounds other cusps}
  Assume that $K\geq 1$.  Let $Y$ be sufficiently large such that for $z\in \sigma_\ell P(Y)$ and $\ell\in\{1,\ldots,K\}$ we have
  that $Y:=\im(\sigma_\ell^{-1} z)>\im(\sigma_\ell^{-1} \gamma \tau)$ for all $\gamma\in \Gamma$.
  \begin{enumerate}[leftmargin=*,label=\rm(\arabic*)]
  \item As $Y\to\infty$, we have
 \begin{equation*}\label{eq. deriv G+E bound}
\left[\frac{\partial}{\partial s}\left(G_s(\sigma_\ell  z,\tau) \!+\! E(\tau;s)\right)\right]_{s=1}\!\!\!= O\!\left(P_{1}(\tau)+P_{\ell,0,1}(\tau)  \!+\! P_{\varrho_\ell,i\infty}(\tau)\log (Y) \!+\!\log^2(Y)\right)\!,
  \end{equation*}
  where the implied constant is independent of $\tau\in\mathfrak F$.
  \item As $Y\to\infty$, we have
\begin{equation*}\label{eq. norm deriv G+E bound}
\left[\frac{\partial}{\partial s} \frac{\partial}{\partial\mathrm n}\left(G_s(\sigma_\ell z,\tau) + E(\tau;s)\right)\right]_{s=1}= O\left(\frac{\log (Y)+P_{\varrho_\ell,i\infty}(\tau)}{Y}\right)\!,
  \end{equation*}
   where the implied constant is independent of $\tau\in\mathfrak F$.
  \end{enumerate}
   \end{corollary}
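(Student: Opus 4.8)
The plan is to read both bounds off the Fourier expansion \eqref{eq. Four exp Green at cusps} of $G_s$ at the cusp $\varrho_\ell$, evaluated at the point $\sigma_\ell z$ with $z=z_1+iY\in P(Y)$, so that $\im(\sigma_\ell^{-1}\sigma_\ell z)=Y$. Writing $V:=\vol(\Gamma\backslash\H)$, this presents $G_s(\sigma_\ell z,\tau)$ as the sum of the ``constant term'' $\frac{Y^{1-s}}{1-2s}E_{\varrho_\ell}(\tau;s)$ and a Fourier series over $n\neq 0$ whose coefficients carry the factor $K_{s-\frac12}(2\pi|n|Y)$. The whole argument rests on separating these two pieces: the constant term produces the main contribution, whereas the $K$-Bessel factors force the Fourier series to be negligible as $Y\to\infty$.

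For part (1) I would add $E(\tau;s)$ and study $T(s):=\frac{Y^{1-s}}{1-2s}E_{\varrho_\ell}(\tau;s)+E(\tau;s)$ near $s=1$. Inserting the Laurent expansions \eqref{KronLimitPArGen} and \eqref{eq. KronLimitPAr cusps} (the latter with $j=0$, i.e.\ for $E_{\varrho_\ell}(\tau;s)$), together with the Taylor expansion of $s\mapsto\frac{Y^{1-s}}{1-2s}$ about $s=1$, whose value there is $-1$ and whose higher coefficients carry powers of $\log Y$, one sees that, since both Eisenstein series have residue $V^{-1}$ at $s=1$ and $\frac{Y^{1-s}}{1-2s}$ equals $-1$ there, the simple poles cancel; hence $T$ is holomorphic at $s=1$. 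Extracting the coefficient of $(s-1)$ then expresses $[\frac{\partial}{\partial s}T(s)]_{s=1}$ as an explicit combination of $P_1(\tau)$, $P_{\ell,0,1}(\tau)$, $P_{\varrho_\ell,i\infty}(\tau)\log Y$ and $\log^2 Y$, together with terms that are $O(\log Y)$, which is exactly the asserted bound.

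For the Fourier series I would bound $F_{-n,s,\varrho_\ell}(\tau)$ and its $s$-derivative by \eqref{eq. NP growth at other cusps}, against the exponential decay of $K_{s-\frac12}(2\pi|n|Y)$ and of $\frac{\partial}{\partial s}K_{s-\frac12}(2\pi|n|Y)$ (evaluating at $s=1$ via \eqref{eq:Bessel_identities1}, the $s$-derivative inserting at most a factor $\log Y$). Because the hypothesis guarantees $Y>\im(\sigma_\ell^{-1}\gamma\tau)$ for all $\gamma$, each term is controlled by a constant depending only on $\Gamma$ times $|n|^{-\frac12}Y^{-\frac12}e^{-2\pi|n|(Y-\im(\sigma_\ell^{-1}\tau))}$, so the series and its $s$-derivative are exponentially small in $Y$ and are absorbed into the error. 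Part (2) is then nearly immediate: as $E(\tau;s)$ is independent of $z$, the normal derivative $\frac{\partial}{\partial\mathrm n}=\frac{\partial}{\partial Y}$ annihilates it and sends the constant term to $\frac{(1-s)Y^{-s}}{1-2s}E_{\varrho_\ell}(\tau;s)$; here the extra factor $(1-s)$ supplies a zero at $s=1$ cancelling the pole of $E_{\varrho_\ell}$, and expanding about $s=1$ produces an overall $Y^{-1}$ multiplying the combination $\log Y+P_{\varrho_\ell,i\infty}(\tau)$, while $\frac{\partial}{\partial Y}$ of the Fourier series stays exponentially small.

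The main obstacle I anticipate is the uniformity in $\tau\in\mathfrak F$ of the Fourier-series estimate: as $\tau$ approaches $\varrho_\ell$ the coefficients $F_{-n,s,\varrho_\ell}(\tau)$ grow like $e^{2\pi|n|\im(\sigma_\ell^{-1}\tau)}$, and one must verify that the admissible range $Y>\sup_\gamma\im(\sigma_\ell^{-1}\gamma\tau)$ keeps the net exponent $-(Y-\im(\sigma_\ell^{-1}\tau))$ strictly negative with a constant independent of $\tau$. The remaining work, organizing the product of the Laurent series of the Eisenstein series with the Taylor series of $\frac{Y^{1-s}}{1-2s}$ so as to confirm the pole cancellation and to read off the coefficient of $(s-1)$ cleanly, is routine but must be carried out carefully so that no $\log^2 Y$ or $P_{\varrho_\ell,i\infty}(\tau)\log Y$ contribution is dropped or double-counted.
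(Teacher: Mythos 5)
Your proposal is correct and follows essentially the same route as the paper's own proof: both split the Fourier expansion \eqref{eq. Four exp Green at cusps} at the cusp $\varrho_\ell$ into the constant term $\frac{Y^{1-s}}{1-2s}E_{\varrho_\ell}(\tau;s)+E(\tau;s)$, whose pole cancellation and $(s-1)$-coefficient (via \eqref{KronLimitPArGen} and \eqref{eq. KronLimitPAr cusps}) produce the stated main terms, and the nonconstant Fourier tail, which is killed by the decay of the $K$-Bessel factors against the growth \eqref{eq. NP growth at other cusps}. You merely carry out more explicitly what the paper compresses into two sentences (including the part (2) computation, which the paper dismisses as ``analogous''), and your flagged uniformity concern is a real subtlety that the paper's proof also glosses over rather than resolves.
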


   \begin{proof}
    \begin{enumerate}[wide,labelwidth=0pt,labelindent=0pt]
     \item The assumption $Y=\im(\sigma_\ell^{-1} z)>\im(\sigma_\ell^{-1} \gamma \tau)$, together with the bound \eqref{eq. NP growth at other cusps} and the exponential decay of the derivative of the $K$-Bessel function imply that the function on the right-hand side of \eqref{eq. Four exp Green at cusps} is uniformly bounded in $\tau$ as $Y\to \infty$. Furthermore, from \eqref{KronLimitPArGen} and \eqref{eq. KronLimitPAr cusps},
    	\begin{equation*}
    	\noindent \left[\frac{\partial}{\partial s}\left( E(\tau;s) \!-\!E_{\varrho_\ell}(\tau;s)\frac{Y^{1-s}}{2s-1}\right)\right]_{s=1}\!=\!O\left(P_{1}(\tau)\!+\!P_{\ell,0,1}(\tau)\!+\! P_{\varrho_\ell,i\infty}(\tau) \log \left(Y\right)\! +\!\log^2 \left(Y\right)\right),
    	\end{equation*}
    	as $Y\to \infty$, where the implied constant is independent of $\tau$. This proves (1).
    	
    	\item This is shown analogously to (1), so we omit the proof. \qedhere
    \end{enumerate}
   \end{proof}

\subsection{Relating $[\frac{\partial}{\partial s}\xi_{0,z}(G_{\bar s}(z,\tau))]_{s=1}$ to $\mathbb{F}(z,\tau)$}\label{Subsection}

Using \eqref{Fourier exp Green}, we prove the following proposition.
\begin{proposition}\label{prop. holom part}
  The function $z\mapsto\mathbb{F}(z,\tau)$ is the holomorphic part of the weight two biharmonic Maass form
  $[\frac{\partial}{\partial s}\xi_{0,z}(G_{\bar s}(z,\tau))]_{s=1}.$
\end{proposition}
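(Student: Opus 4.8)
The plan is to combine the intertwining property of the $\xi$-operator with the explicit Fourier expansion of $G_{\overline s}$ from Lemma \ref{fourier expansion}. Write $H(z,\tau):=[\frac{\partial}{\partial s}\xi_{0,z}(G_{\overline s}(z,\tau))]_{s=1}$. First I would record that $\xi_{0,z}(G_{\overline s}(z,\tau))$ is a weight two eigenfunction of $\Delta_{2,z}$ which, crucially, is holomorphic in $s$. Indeed, as a function of $z$ (for $z\neq\gamma\tau$), $G_{\overline s}$ satisfies $\Delta_{0,z}G_{\overline s}=\overline s(1-\overline s)G_{\overline s}$. Applying the conjugate-linear operator $\xi_{0,z}$ and using the intertwining relation $\xi_{0,z}\circ\Delta_{0,z}=\Delta_{2,z}\circ\xi_{0,z}$ (which follows from $\Delta_k=-\xi_{2-k}\xi_k$), one gets $\Delta_{2,z}(\xi_{0,z}G_{\overline s})=\overline{\overline s(1-\overline s)}\,\xi_{0,z}G_{\overline s}=s(1-s)\xi_{0,z}G_{\overline s}$; the conjugation built into $\xi_{0,z}$ turns the anti-holomorphic $s$-dependence of $G_{\overline s}$ into a holomorphic one, so $\frac{\partial}{\partial s}$ is a genuine complex derivative. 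Differentiating the eigenvalue equation in $s$ and setting $s=1$ (where $s(1-s)$ vanishes to first order) gives $\Delta_{2,z}H=-\xi_{0,z}G_1$, and a second application of $\Delta_{2,z}$ together with $\Delta_{2,z}(\xi_{0,z}G_1)=0$ yields $\Delta_{2,z}^2H=0$; hence $H$ is a weight two biharmonic Maass form.

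Next I would compute the Fourier expansion of $H$ in the region $z_2>\max\{\im(\gamma\tau):\gamma\in\Gamma\}$, where Lemma \ref{fourier expansion} applies; the termwise action of $\xi_{0,z}$ and $\frac{\partial}{\partial s}$ is justified by the exponential decay of the $K$-Bessel functions together with the bounds \eqref{eq. NP deriv bound0} and \eqref{eq. NP deriv bound}. A direct computation gives, for a single Fourier term, $\xi_{0,z}(c(z_2)e^{2\pi i n z_1})=(\overline{c'(z_2)}+2\pi n\,\overline{c(z_2)})e^{-2\pi i n z_1}$, so the holomorphic modes $e^{2\pi i m z_1}$ ($m\ge1$) of $\xi_{0,z}G_{\overline s}$ arise exactly from the $n=-m<0$ terms of the expansion. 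Using $\overline{F_{m,\overline s}(\tau)}=F_{-m,s}(\tau)$ (which follows from $\overline{I_{\overline\nu}(x)}=I_\nu(x)$ for $x>0$ and complex-conjugating the exponential in \eqref{eq. N-P series defn}), the $m$-th such mode equals $-F_{-m,s}(\tau)\Phi_m(z_2,s)e^{2\pi i m z_1}$, where, via the recurrence $K_\nu'(x)=-K_{\nu-1}(x)-\tfrac{\nu}{x}K_\nu(x)$,
\[
\Phi_m(z_2,s):=\frac{d}{dz_2}\big(\sqrt{z_2}\,K_{s-\frac12}(2\pi m z_2)\big)-2\pi m\sqrt{z_2}\,K_{s-\frac12}(2\pi m z_2)=\frac{1-s}{\sqrt{z_2}}K_{s-\frac12}(x)-2\pi m\sqrt{z_2}\big(K_{s-\frac32}(x)+K_{s-\frac12}(x)\big),
\]
with $x=2\pi m z_2$.

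Finally I would differentiate in $s$ and extract the holomorphic part. One checks $\Phi_m(z_2,1)=-2\pi\sqrt{m}\,e^{-2\pi m z_2}$, and---crucially---that the order-derivatives of the Bessel functions cancel: since $K_{-\nu}=K_\nu$ one has $[\frac{\partial}{\partial\nu}K_\nu]_{\nu=-\frac12}=-[\frac{\partial}{\partial\nu}K_\nu]_{\nu=\frac12}$, whence $[\frac{\partial}{\partial s}\Phi_m]_{s=1}=-\frac{1}{2\sqrt m\,z_2}e^{-2\pi m z_2}$. Therefore the $m$-th Fourier coefficient of $H$ equals $2\pi\sqrt m\,[\frac{\partial}{\partial s}F_{-m,s}(\tau)]_{s=1}e^{2\pi i m z}+\frac{F_{-m,1}(\tau)}{2\sqrt m\,z_2}e^{2\pi i m z}$; the first summand is genuinely holomorphic and sums to $\mathbb F(z,\tau)$, while the second carries the factor $z_2^{-1}$ and is non-holomorphic. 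It then remains to observe that the other modes contribute nothing holomorphic: the $n>0$ terms of $G_{\overline s}$ produce modes $e^{-2\pi i n z_1}$ decaying like $e^{-2\pi n z_2}$ (anti-holomorphic), and $\xi_{0,z}$ of the constant term $\frac{z_2^{1-\overline s}}{1-2\overline s}E(\tau;\overline s)$ yields a $z_2^{-s}$-type contribution that is likewise non-holomorphic. This identifies $\mathbb F$ as the holomorphic part of $H$.

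The main obstacle I anticipate is the Fourier bookkeeping in the last two steps: rigorously justifying the termwise action of $\xi_{0,z}$ and $\frac{\partial}{\partial s}$ on the series, carrying out the Bessel-function manipulations (in particular the cancellation of the order-derivatives $\frac{\partial}{\partial\nu}K_\nu$ that keeps $[\frac{\partial}{\partial s}\Phi_m]_{s=1}$ purely of the non-holomorphic form $z_2^{-1}e^{-2\pi m z_2}$), and handling the simple pole of $E(\tau;s)$ at $s=1$ against the vanishing factor $(1-s)$ in the constant-term contribution.
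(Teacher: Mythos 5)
Your proof is correct, and its overall route is the same as the paper's: act with $\xi_{0,z}$ on the Fourier expansion of $G_{\bar s}$ from Lemma \ref{fourier expansion}, differentiate in $s$ at $s=1$, and read off the holomorphic modes; your opening paragraph on biharmonicity is precisely the paper's (terse) argument spelled out. The genuine difference lies in the Bessel bookkeeping. The paper decomposes $\xi_{0,z}(G_{\bar s})$ into three global pieces $T_s^{[1]},T_s^{[2]},T_s^{[3]}$ and disposes of the terms where $\partial_s$ hits a Bessel factor by citing the exponential-integral evaluation $[\frac{\partial}{\partial s}K_{s-\frac12}(x)]_{s=1}=-\sqrt{\frac{\pi}{2x}}\,e^{x}\mathrm{Ei}(-2x)$ from \cite{GR07}, arguing that such contributions are never holomorphic. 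You instead collect each mode $e^{2\pi i m z_1}$ ($m\ge 1$) into the single quantity $\Phi_m(z_2,s)$ and exploit the symmetry $K_{-\nu}=K_\nu$, which forces $[\frac{\partial}{\partial\nu}K_\nu]_{\nu=-\frac12}+[\frac{\partial}{\partial\nu}K_\nu]_{\nu=\frac12}=0$, so the order-derivatives cancel outright and $\mathrm{Ei}$ never enters. This is a genuine simplification, and it yields the sharper closed form that the full coefficient of $e^{2\pi i m z_1}$ in $[\frac{\partial}{\partial s}\xi_{0,z}(G_{\bar s}(z,\tau))]_{s=1}$ equals $2\pi\sqrt{m}\,[\frac{\partial}{\partial s}F_{-m,s}(\tau)]_{s=1}e^{2\pi i m z}+\frac{F_{-m,1}(\tau)}{2\sqrt{m}\,z_2}e^{2\pi i m z}$; I verified this is consistent with the paper's computation, in whose grouping the $\mathrm{Ei}$-terms coming from $T_s^{[2]}$ and \eqref{eq. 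T3 term 1} cancel each other, as do the $z_2^{-1}$-terms multiplying $[\frac{\partial}{\partial s}F_{-m,s}(\tau)]_{s=1}$ coming from $T_s^{[2]}$ and \eqref{eq. T3 term 2}. Your treatment of the remaining non-holomorphic contributions (the constant term played against the simple pole of $E(\tau;s)$, the anti-holomorphic modes from $n>0$, and the $z_2^{-1}$-terms) matches the paper's, so the identification of $\mathbb{F}(z,\tau)$ as the holomorphic part goes through as you describe.
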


\begin{proof}

	Note that $f_{s,\tau}(z):=\xi_{0,z}(G_{\bar s}(z,\tau))$ is holomorphic at $s=1$ and $G_s(z,\tau)$ is an eigenfunction of $\Delta_{0,z}=-\xi_{2,z}\circ\xi_{0,z}$ with eigenvalue $s(1-s)$. Therefore, $f_\tau(z):=f_{1,\tau}(z)$ is biharmonic.

  Acting by $\xi_{0,z}$ on the Fourier expansion \eqref{Fourier exp Green}, for $\sigma>1$, we have, using that $\overline{E(\tau;\overline s)} = E(\tau;s)$, $\overline{F_{-n,\overline s}(\tau)} = F_{n,s}(\tau)$, and $K_{\nu-1}(x)+K_{\nu+1}(x)=-2K_\nu'(x)$ (by 8.486.11 of \cite{GR07})
  \begin{align*}
    f_{s,\tau}(z)&=\frac{(s-1)z_2^{-s}}{2s-1}E(\tau;s) -\frac{1}{2\sqrt{z_2}} \sum_{n\in\Z\setminus \{0\}} K_{s-\frac12}(2\pi |n| z_2) F_{-n,s}(\tau)e^{{2\pi inz_1}}\\
    &+\pi\sqrt{z_2} \sum_{n\in\Z\setminus \{0\}}\left( 2nK_{s-\frac12}(2\pi |n| z_2)+|n|\left( K_{s-\frac32}(2\pi |n| z_2)+K_{s+\frac12}(2\pi |n| z_2)\right)\right) F_{-n,s}(\tau)e^{2\pi inz_1}\\&=: T_s^{[1]}(z,\tau)+T_s^{[2]}(z,\tau)+T_s^{[3]}(z,\tau).
  \end{align*}
 The right-hand side is holomorphic at $s=1$ and equals the sum of the three terms at $s=1$. Using \eqref{KronLimitPArGen}, we see that the derivative at $s=1$ of the first term equals
  $$
 \left[\frac{\partial}{\partial s}T_s^{[1]}(z,\tau)\right]_{s=1} = \frac{\beta\vol(\Gamma\backslash\mathbb{H}) - P(\tau)-2-\log (z_2)}{\vol(\Gamma\backslash\mathbb{H}) z_2}.
  $$
  By \cite[8.486(1).21]{GR07}, we have
   $$
   \left[\frac{\partial}{\partial s}K_{s-\frac{1}{2}}(x)\right]_{s=1}= - \sqrt{\frac{\pi}{2x}}e^x\mathrm{Ei}(-2x),
   $$
    where  for $x>0$, $\mathrm{Ei}(-x) := \int\limits_{-\infty}^x e^t \frac{dt}{t} $ is the {\it exponential integral}.  With this, we get that
 $$
  \left[\frac{\partial}{\partial s}T_s^{[2]}(z,\tau)\right]_{s=1} =\sum_{n\in\Z\setminus \{0\}}\left( \mathrm{Ei}(-4\pi|n|z_2)e^{4\pi|n|z_2}F_{-n,1}(\tau) - \left[\frac{\partial}{\partial s}F_{-n,s}(\tau)\right]_{s=1}\right)\frac{e^{-2\pi|n|z_2+2\pi i nz_1}}{4z_2\sqrt{|n|}}.
 $$
This does not contribute to the holomorphic part of $f_\tau$.

The derivative at $s=1$ of $T_s^{[3]}(z,\tau)$ equals the sum of two terms, namely,
  \begin{multline}\label{eq. T3 term 1}
\pi\sqrt{z_2}\!\!\! \sum_{n\in\Z\setminus \{0\}} \!\! \left[\!\frac{\partial}{\partial s}\!\left( 2nK_{s-\frac12}(2\pi |n| z_2)\!+\!|n|\!\left(\! K_{s-\frac32}(2\pi |n| z_2)\!+\!K_{s+\frac12}(2\pi |n| z_2)\!\right)\!\right)\!\right]_{s=1}\!\!\!\!
 F_{-n,1}(\tau)e^{2\pi inz_1},\!\!\!
\end{multline}
  \begin{equation}\label{eq. T3 term 2}
 	\pi\sqrt{z_2}\!\!\!\sum\limits_{n\in\Z\setminus\{0\} }\!\!\left(2|n|K_{\frac12}(2\pi |n| z_2)+ n\left(K_{-\frac12}(2\pi |n| z_2)+K_{\frac32}(2\pi |n| z_2)\right)\right)\left[\frac{\partial}{\partial s}F_{-n,s}(\tau)\right]_{s=1}\!\!e^{2\pi inz_1}.
 \end{equation}
Reasoning analogously as above, using 8.486.11 and 8.486(1) of \cite{GR07}, we conclude that \eqref{eq. T3 term 1}  does not contribute to the holomorphic part of $f_\tau$ either.
Therefore, the holomorphic part of $f_\tau$ equals the holomorphic part of \eqref{eq. T3 term 2}.
Using \eqref{eq:Bessel_identities1} and
$
K_{-\frac{1}{2}}(x)=K_{\frac32}(x)=\sqrt{\frac{\pi}{2x}}(1+\frac{1}{x})e^{-x},
$
 we deduce that the holomorphic part of $f_\tau$ equals $\mathbb{F}(\tau,z)$
 which completes the proof.\qedhere
\end{proof}

\section{Proof of Theorem \ref{thm: main 1} and Corollary \ref{cor. divisor}}\label{sec:Proof_of_Thm1}

In this section we prove Theorem \ref{thm: main 1} and Corollary \ref{cor. divisor}.
\begin{proof}[Proof of Theorem \ref{thm: main 1}]
	\begin{enumerate}[wide,labelwidth=0pt,labelindent=0pt]
		\item Assume first that $z_2>\im(\gamma \tau)$, for all $\gamma\in\Gamma$. Using \eqref{Fourier exp Green}, we obtain
		\begin{equation*}
		G_s(z,\tau) + E(\tau;s)\\
		=\left( 1+\frac{z_2^{1-s}}{1-2s}\right) E(\tau;s)-\sqrt{z_2}\sum_{n\in\Z\setminus \{0\}} F_{-n,s}(\tau)e^{2\pi inz_1}K_{s-\frac12}(2\pi |n| z_2) .
		\end{equation*}
		The second term on the right-hand side of \eqref{Fourier exp Green} is holomorphic at $s=1$. Moreover, using \eqref{KronLimitPArGen},
		\begin{equation}
		\lim_{s\to 1} \left( 1+\frac{z_2^{1-s}}{1-2s}\right)E(\tau;s) = \frac{2+\log(z_2)}{\operatorname{vol}(\Gamma\backslash \mathbb{H})}.\label{eq. lim of Eis}
		\end{equation}
Using this, \eqref{G}, \eqref{eq:Bessel_identities1}, and \eqref{eq. lim of Eis}, we get
		\begin{equation*}
		\mathcal{G}(z,\tau) \!=\! \frac{2+\log(z_2)}{\operatorname{vol}(\Gamma\backslash \mathbb{H})} -\frac12\sum_{n\in\Z\setminus \{0\}}\frac{F_{-n,1}(\tau)}{\sqrt{|n|}}e^{2\pi i (nz_1+i|n|z_2)}.
		\end{equation*}
		Recalling that $R_{0,z}=2i\frac{\partial}{\partial z}$, we obtain
		\begin{equation}\label{eq. lim as s to 1}
		R_{0,z}\left(\mathcal{G}(z,\tau)\right) = \frac{1}{\operatorname{vol}(\Gamma\backslash \mathbb{H})z_2} + 2\pi\sum_{n\geq 1}\sqrt{n}F_{-n,1}(\tau)e^{2\pi i n z}.
		\end{equation}
		The left-hand side of \eqref{eq. lim as s to 1} is a real-analytic function for all $z,\tau\in\mathbb{H}$ with $z\neq \gamma \tau$ for all $\gamma\in\Gamma$.  Therefore, by analytic continuation, \eqref{eq. lim as s to 1} holds true for all these $z,\tau\in\mathbb{H}$. In view of the defining equation \eqref{eq. defn gen funct F} for $\mathcal F(z,\tau)= \mathcal F_1(z,\tau)$, (1) holds.
		
		\item By part (1), we only need to show that $\tau\mapsto \mathcal{G}(z,\tau)$ is a weight zero polar harmonic Maass form. From the properties of $G_s(z,\tau)$ and $E(\tau;s)$, as recalled in Subsections \ref{sec. parab Eis} and \ref{sec. aut green},
it is immediate that $\mathcal{G}(z,\tau)=\lim_{s\to 1}\left(G_s(z,\tau) + E(\tau;s)\right)$ is $\Gamma$-invariant and real-analytic in both variables.  Also, it is annihilated by the Laplacian $\Delta_{0,\tau}$ and it has a logarithmic singularity in the fundamental domain as $\tau\to z$. Thus, $\tau\mapsto \mathcal{G}(z,\tau)$ is a weight zero polar harmonic Maass form, and, in view of \eqref{eq. lim as s to 1}, so is function $\tau\mapsto \mathcal{F}(z,\tau)$.

\item Recall that $\widehat{\mathcal{F}}(z,\tau) = R_{0,z}(\mathcal{G}(z,\tau))$. From the properties of the Green's function and the parabolic Eisenstein series it follows that $z\mapsto \mathcal{G}(z,\tau)$ is $\Gamma$-invariant, real-analytic in $z$, and annihilated by the Laplacian $\Delta_{0,z}$. Thus, $z\mapsto \widehat{\mathcal{F}}(z,\tau)$ is a weight two polar harmonic Maass form.
We are left to determine the polar part of $\widehat{\mathcal F}(z,\tau)$.
Recall the definition of $k_s$ in \eqref{ks}. The identity
$$
\left(1-\frac{|z-\tau|^2}{|z-\overline\tau|^2}\right)^{-1}=\frac{|z-\overline\tau|^2}{4z_2\tau_2}
=1+\frac{|z-\tau|^2}{4z_2\tau_2}=1+u(z,\tau),
$$
enables us to write
$$
k_s(z,\tau)= -\frac{\Gamma^2(s)}{4\pi\Gamma(2s)} \left(1+u(z,\tau)\right)^{-s} {}_2F_1\left(s,s;2s;\frac{1}{1+u(z,\tau)}\right).
$$
Using 15.5.3 of \cite{DLMF}, we obtain
$$
\frac{\partial}{\partial z}k_s(z,\tau)=\frac{s\Gamma^2(s)}{4\pi \Gamma(2s)} \left(1+u(z,\tau)\right)^{-s-1}{}_2F_1\left(s+1,s;2s;\frac{1}{1+u(z,\tau)}\right)\frac{\partial}{\partial z}u(z,\tau).
$$
Next note that
\begin{equation*}
\frac{\partial}{\partial z}u(z,\tau)= \frac{i(\overline z - \overline \tau)(\overline z - \tau)}{8z_2^2\tau_2}.
\end{equation*}
Combining this with the final equation in 9.131.1 of \cite{GR07}, we obtain
$$
\frac{\partial}{\partial z}k_s(z,\tau)=\frac{\Gamma(s)\Gamma(s+1)}{4\pi \Gamma(2s)}\left(1+u(z,\tau)\right)^{-s}\frac{1}{u(z,\tau)}{}_2F_1\left(s-1,s;2s;\frac{1}{1+u(z,\tau)}\right)\frac{i(\overline z - \overline \tau)(\overline z - \tau)}{8z_2^2\tau_2}.
$$
Using the definition of $u(z,\tau)$ and some elementary algebraic manipulations, we get
$$
\frac{1}{4u(z,\tau)(1+u(z,\tau))}\frac{i(\overline z - \overline \tau)(\overline z- \tau)}{8z_2^2\tau_2}= \frac{i\tau_2}{2(z-\tau) (z-\overline \tau)}.
$$
Hence, we obtain
\begin{equation}\label{eq}
\frac{\partial}{\partial z}k_s(z,\tau)= \frac{i\tau_2\Gamma(s)\Gamma(s+1)}{2\pi(z-\overline \tau)(z-\tau)\Gamma(2s)}\left(1+u(z,\tau)\right)^{1-s}
{}_2F_1\left(s-1,s;2s;\frac{1}{1+u(z,\tau)}\right).
\end{equation}
We now apply 9.122 of \cite{GR07} with $\alpha=s-1$, $\beta=s$, and $\gamma=2s$. Thus, for $s\in\C$ with $\re(s)\geq 1$, as $z\to\tau$
(so $u(z,\tau)\to 0$), we have that
\begin{equation*}
\frac{\Gamma(s)\Gamma(s+1)}{ \Gamma(2s)}\left(1+u(z,\tau)\right)^{1-s}{}_2F_1\left(s-1,s;2s;\frac{1}{1+u(z,\tau)}\right)
\sim 1.
\end{equation*}
Plugging this into \eqref{eq} proves that, for $s\in\C$ with $\re(s)>0$,
$$
\frac{\partial}{\partial z}k_s(z,\tau)\sim\frac{i\tau_2}{2\pi(z-\overline \tau)(z-\tau)}\,\,\,\text{\rm as}\,\,\, z\to\tau.
$$
By \eqref{eq. defn Green}, $\frac{\partial}{\partial z} k_s(\gamma z,\tau)$ has, for $s\in\C$ with $\re(s)>1$ and $\gamma\in\Gamma$, a singularity if $\tau=\gamma z$. Then
\begin{equation}\label{eq2}
\frac{\partial}{\partial z}(G_s(z,\tau) + E(\tau;s))=\frac{\partial}{\partial z}G_s(z,\tau)\sim \frac{i \mathrm{Stab}_\tau \tau_2}{2\pi(z-\overline \tau)(z-\tau)}\quad (\text{as}\,\,z\rightarrow \tau).
\end{equation}
Since the right-hand side is independent of $s$ and the left-hand side is well-defined for $s=1$ (and by \eqref{G} at $s=1$ equals $\frac{\partial}{\partial z}\mathcal{G}(z,\tau)$), by multiplying \eqref{eq2} with $2i$, we deduce that the polar part of
$
2i\frac{\partial}{\partial z}\mathcal{G}(z,\tau) - \frac{1}{\operatorname{vol}(\Gamma\backslash \mathbb{H}) z_2},
$
as $z\to \tau$, is $-\frac{\mathrm{Stab}_\tau \tau_2}{\pi(z-\tau)(z-\overline \tau)}$.\qedhere
\end{enumerate}
\end{proof}

We are now ready to prove Corollary \ref{cor. divisor}.

\begin{proof}[Proof of Corollary \ref{cor. divisor}]  We let
	\begin{equation*}
		g(z):=f^{{\rm div}} (z)-\frac{k}{4\pi z_2}-\frac{\Theta\left(f(z)\right)}{f(z)}.
	\end{equation*}
	To prove the Corollary, we need to show that $g\in S_2(\Gamma)$. By the valence formula, we have that
$$
	\frac{1}{\vol(\Gamma\backslash\mathbb{H}) } \sum_{\tau\in\mathfrak F} \frac{\ord_\tau(f)}{{\rm Stab}_\tau}=\frac{k}{4\pi }.
$$
Combining \eqref{eq. div dom form} with the definition of $\widehat{\mathcal F}$, we get
\begin{equation}\label{eq. f div difference}
g(z)=\sum_{\tau\in\mathfrak F} \frac{\ord_\tau(f)}{{\rm Stab}_\tau}\mathcal{F}(z,\tau) + \frac{\Theta(f(z))}{f(z)}.
\end{equation}
Using the definition of the weight $k$ rasing operator $R_{k,z}:= 2i\frac{\partial}{\partial z} + \frac{k}{z_2}$, it is not hard to see that $$\frac{\Theta(f(z))}{f(z)}-\frac{k}{4\pi z_2}= -\frac{1}{4\pi}\frac{R_{k,z}(f(z))}{f(z)}.$$
This gives that
\begin{equation}\label{gR}
	g(z)=\sum_{\tau\in\mathfrak{F}} \frac{\ord_\tau (f)}{{\rm Stab}_\tau} \widehat{\mathcal{F}}(z,\tau)-\frac{1}{4\pi} \frac{R_{k,z}\left(f(z)\right)}{f(z)}.
\end{equation}
Thus, by Theorem \ref{thm: main 1} (3), $g$ is harmonic of weight two.

We next show that $g$ does not have poles in $\H$. For this, we use the representation of \eqref{eq. f div difference}.
The residue of $\frac{\Theta(f(z))}{f(z)}$ at $z=\tau\in\mathbb{H}$ is $\frac{\ord_\tau(f)}{2\pi i} $. By Theorem \ref{thm: main 1} (3), the residue of $\mathcal{F}(z,\tau)$ at a simple pole $z=\tau$ is $-\frac{{\rm Stab}_\tau}{2\pi i} $. Thus, $g$ has no poles in $\mathbb{H}$, hence $g$ is a harmonic Maass form of weight two for $\Gamma$.
We are left to prove that $g$ vanishes at the cusps of $\Gamma$. Thus, we need to show that, for a scaling matrix $\sigma \in\SL_2(\R)$ associated to a cusp $\varrho =i\infty$ or $\varrho=\varrho_\ell$ for $\ell\in\{1,\ldots,K\}$, we have
\begin{equation*}
	\lim\limits_{z\to i\infty} g(z)|_2 \sigma =0.
\end{equation*}
From \eqref{gR}, we have
\begin{equation}\label{eq. g slashed}
	g|_2 \sigma(z)=\sum_{\tau\in\mathfrak{F}} \frac{\ord_\tau(f)}{{\rm Stab}_\tau} \widehat{\mathcal{F}}(z,\tau)|_{2,z} \sigma -\frac{1}{4\pi} \frac{R_{k,z}\left(f(z)|_k \sigma\right)}{f(z)|_k \sigma}.
\end{equation}
By Theorem \ref{thm: main 1} (3), $z\mapsto \widehat{\mathcal{F}}(z,\tau)$ is a polar harmonic Maass form of weight zero with a simpe pole only as $z\to\tau$. Moreover, from \eqref{eq. lim as s to 1} it is evident that $\lim_{z\to i\infty}\widehat{\mathcal{F}}(z,\tau)=0$. If $\varrho=\varrho_\ell\neq i\infty$ and $\tau\neq \varrho$, we employ the Fourier expansion \eqref{eq. Four exp Green at cusps} of the Green's function at the cusp $\varrho$, combined with the Laurent series expansions \eqref{KronLimitPArGen} and \eqref{eq. KronLimitPAr cusps} of the Eisenstein series at cusps $i\infty$ and $\varrho=\varrho_\ell$ to deduce, for  $z_2$ sufficiently large, that
\begin{multline*}
\widehat{\mathcal{F}}(\sigma z,\tau)=R_{0,z}\bigg(\frac{2+\log (z_2)}{\vol(\Gamma\backslash \mathbb{H})} +\beta-\frac{P(\tau)}{\vol(\Gamma\backslash \mathbb{H})}-\beta_{\ell,0} + \frac{P_{\varrho,i\infty}(\tau)}{\vol(\Gamma\backslash \mathbb{H})}
 - \frac{1}{2}\sum_{n\in\Z\setminus\{0\}}\frac{ F_{-n,1,\varrho}(\tau)}{\sqrt{|n|}}e^{2\pi i (nz_1+i|n|z_2)}\bigg).
\end{multline*}
Therefore, we have
$$
\widehat{\mathcal{F}}(\sigma z,\tau)=\frac{1}{\vol(\Gamma\backslash \mathbb{H})z_2}+2\pi \sum_{n\geq 1}\sqrt{n}F_{-n,1,\varrho}(\tau)e^{2\pi i nz}.
$$
If $\tau=\varrho$, then $\ord_\tau(f)=0$. This proves that the first term on the right-hand side of \eqref{eq. g slashed} vanishes as $z\to i\infty$, for all $\tau$.
For the second term, we write
\begin{equation*}
	f(z)|_k \sigma=\sum_{n\ge 0} c_{f,\sigma}(n)e^{2\pi i n z}
\end{equation*}
with $c_{f,\sigma}(0)\neq 0$. Then
\begin{equation*}
	f(z)|_k \sigma=c_{f,\sigma}(0)+O\left(e^{-2\pi  z_2} \right),\quad R_{k,z}\left(f(z)|_k \sigma\right)=\frac{kc_{f,\sigma}(0)}{z_2}+O\left(e^{-2\pi z_2}\right).
\end{equation*}
This directly implies the claim.
\end{proof}

\section{Proof of Theorem \ref{thm: main 2_1} and Theorem \ref{thm: main 2_2}}\label{sec:Proof_of_series}

Using \eqref{eq. defn gen funct F} and \eqref{eq. N-P series defn}, we write
\begin{align}\label{eq:F_double_series}
\mathcal F_{s}(z,\tau)=2\pi\sum_{n\ge1}\sqrt{n}\sum_{\gamma\in\Gamma_\infty\setminus\Gamma} \sqrt{\im(\gamma\tau)}
I_{s-\frac12}\left(2\pi n\im(\gamma \tau)\right)e^{-2\pi in\re(\gamma\tau)}e^{2\pi inz}.
\end{align}

We next prove Theorem \ref{thm: main 2_1}.

\begin{proof}[Proof of Theorem \ref{thm: main 2_1}]
Noting that $I_{\frac{3}{2}}(x) = \sqrt{\frac{2}{\pi x}} (\cosh(x) - \frac{\sinh(x)}{x})$, we have
$$
\sqrt{n} \sqrt{\im(\gamma\tau)}
I_{\frac32}\left(2\pi n\im(\gamma \tau)\right)
= \frac{1}{\pi}\left(\cosh(2\pi n\im(\gamma \tau)) - \frac{\sinh(2\pi n\im(\gamma \tau))}{2\pi n\im(\gamma \tau)}\right).
$$
Thus \eqref{eq:F_double_series} implies that
$$
\mathcal F_{2}(z,\tau)=2\sum_{n\ge1}\sum_{\gamma\in\Gamma_\infty\setminus\Gamma}
\left(\cosh(2\pi n\im(\gamma \tau)) - \frac{\sinh(2\pi n\im(\gamma \tau))}{2\pi n\im(\gamma \tau)}\right)
e^{2\pi in(z-\re(\gamma\tau))}.
$$
Assume that $z,\tau\in\mathbb{H}$ are fixed with $z_2>\im(\gamma \tau)$ for all $\gamma\in\Gamma$.
For any fixed $\gamma$, we can write
\begin{multline}\label{eq:geometric_series}
2\sum_{n\ge1} \cosh(2\pi n\im(\gamma \tau)) e^{2\pi in(z-\re(\gamma\tau))}
= \sum_{n\ge1} \left(e^{2\pi n\im(\gamma \tau)} + e^{-2\pi n\im(\gamma \tau)}\right) e^{2\pi in(z-\re(\gamma\tau))}
\\= \sum_{n\ge1} \left(e^{2\pi i n(z-\gamma\tau)} + e^{2\pi i n(z-\overline{\gamma\tau})}\right)
= \frac{e^{2\pi i (z-\gamma\tau)}}{1-e^{2\pi i (z-\gamma\tau)}} +
 \frac{e^{2\pi i (z-\overline{\gamma\tau})}}{1-e^{2\pi i (z-\overline{\gamma\tau})}}.
\end{multline}
Similarly, we compute
\begin{multline*}
\hspace{-.3cm}2\sum_{n\ge1} \frac{\sinh(2\pi n\im(\gamma \tau))}{2\pi n\im(\gamma \tau)} e^{2\pi in(z-\re(\gamma\tau))}
= \frac{1}{2\pi \im(\gamma \tau)}\sum_{n\ge1} \frac{1}{n} \! \left(e^{2\pi n\im(\gamma \tau)} - e^{-2\pi n\im(\gamma \tau)}\right) \! e^{2\pi in(z-\re(\gamma\tau))}
\\\hspace{.6cm}= \frac{1}{2\pi \im(\gamma \tau)}\sum_{n\ge1} \frac{1}{n}\! \left(e^{2\pi i n(z-\gamma\tau)} \!-\! e^{2\pi i n(z-\overline{\gamma\tau})}\right)
= \frac{\Log\!\left(1-e^{2\pi i\left(z-\overline{\gamma\tau}\right)}\right)
-\Log\!\left(1-e^{2\pi i\left(z-\gamma\tau\right)}\right)}
{\pi i( \overline{\gamma\tau}-\gamma \tau)}.\qedhere
\end{multline*}
\end{proof}

\begin{remark}  For $k \in \mathbb{N}_{>1}$, one can similarly compute
of $\mathcal F_{k}(z,\tau)$ using known evaluations of $I_{\frac{k+1}{2}}$.
\end{remark}
To prove Theorem \ref{thm: main 2_2}, we require the following lemma.

\begin{lemma}\label{lem:Bessel_bound} There exists a constant $C$ and a sufficiently small neighborhood of $s$ near one such that, for all $x \geq 0$, we have
\begin{equation}\label{eq:I-sinh-bound}
	\left| I_{s-\frac{1}{2}}(x) - \frac{\sinh(x)x^{s-\frac{3}{2}}}{2^{s-\frac{1}{2}}\Gamma\left(s+\frac{1}{2}\right)}\right| \leq C(s-1)x^{2}e^{x}.
\end{equation}
\end{lemma}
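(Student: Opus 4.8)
The plan is to compare the Taylor expansions (in $x$) of the two terms directly, exploiting that they agree identically at $s=1$. First I would write, for $\re(s)>\frac12$,
$$
I_{s-\frac12}(x) = \sum_{k\ge0}\frac{(x/2)^{2k+s-\frac12}}{k!\,\Gamma(k+s+\frac12)}, \qquad \frac{\sinh(x)x^{s-\frac32}}{2^{s-\frac12}\Gamma(s+\frac12)} = \frac{1}{2^{s-\frac12}\Gamma(s+\frac12)}\sum_{k\ge0}\frac{x^{2k+s-\frac12}}{(2k+1)!},
$$
so that the left-hand side of \eqref{eq:I-sinh-bound} equals $\frac{x^{s-\frac12}}{2^{s-\frac12}}\sum_{k\ge0}a_k(s)x^{2k}$, where $a_k(s):=\frac{1}{k!\,4^k\,\Gamma(k+s+\frac12)}-\frac{1}{(2k+1)!\,\Gamma(s+\frac12)}$. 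A direct check gives $a_0(s)\equiv0$, so the sum in fact starts at $k=1$.

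Next I would extract the vanishing at $s=1$. Writing $a_k(s)=b_k(s)/(2k+1)!$ and applying the Legendre duplication formula to $(2k+1)!=\Gamma(2k+2)$ collapses the first term into a ratio of Gamma functions, giving the clean expression
$$
b_k(s)=\frac{2}{\sqrt\pi}\cdot\frac{\Gamma(k+\frac32)}{\Gamma(k+s+\frac12)}-\frac{1}{\Gamma(s+\frac12)}.
$$
Since $\Gamma(\frac32)=\frac{\sqrt\pi}{2}$, one has $b_k(1)=0$ for every $k$, which is precisely the source of the factor $(s-1)$. The heart of the argument is then a uniform estimate $|b_k(s)|\le C(s-1)\log(k+C')$ valid for all $k\ge1$ and all $s$ in a small (one-sided) neighborhood $[1,1+\delta]$ of $1$; the case $s<1$ is entirely symmetric. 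I would split $b_k(s)=\frac{2}{\sqrt\pi}\big(h_k(s)-1\big)+\big(\frac{1}{\Gamma(\frac32)}-\frac{1}{\Gamma(s+\frac12)}\big)$ with $h_k(s):=\Gamma(k+\frac32)/\Gamma(k+s+\frac12)$; the second bracket is $O(s-1)$ by smoothness of $s\mapsto1/\Gamma(s+\frac12)$. For the first, I would use the integral representation $\log h_k(s)=-\int_1^s\psi(k+u+\frac12)\,du$ together with $0<\psi(y)<\log y$ (valid on the relevant range $y\ge\frac52$) and the elementary inequality $1-e^{-t}\le t$ to deduce $0\le 1-h_k(s)\le(s-1)\log(k+C')$ uniformly in $k$.

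Finally, inserting this bound yields
$$
\left|I_{s-\frac12}(x)-\frac{\sinh(x)x^{s-\frac32}}{2^{s-\frac12}\Gamma(s+\frac12)}\right|\le C(s-1)\,\frac{x^{s-\frac12}}{2^{s-\frac12}}\sum_{k\ge1}\frac{\log(k+C')}{(2k+1)!}x^{2k},
$$
and I would conclude by a case split on $x$. For $x\le1$ I bound $x^{s-\frac12}\le1$ and the series by $x^2\sum_{k\ge1}\log(k+C')/(2k+1)!$, a constant multiple of $x^2$, using $e^x\ge1$. For $x\ge1$ I use $\log(k+C')\le C_2(2k+1)$ to dominate the series by $C_2\sum_{k\ge1}x^{2k}/(2k)!\le C_2\cosh(x)\le C_2e^x$, together with $x^{s-\frac12}\le x^2$. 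Both regimes land on $C(s-1)x^2e^x$.

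The main obstacle is the uniform-in-$k$ estimate on $b_k(s)$: naively the ratio $\Gamma(k+\frac32)/\Gamma(k+s+\frac12)$ tends to $0$ as $k\to\infty$ for fixed $s>1$, so $b_k(s)$ is \emph{not} $O(s-1)$ uniformly. One must instead quantify that this decay "turns on" only once $(s-1)\log k$ becomes of order one, which is exactly what the digamma integral representation delivers and what produces the harmless logarithmic factor $\log(k+C')$ that the factorial in the denominator then absorbs.
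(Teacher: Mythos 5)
Your proof is correct on the range where the lemma is meaningful (a one-sided neighborhood $s\in[1,1+\delta]$; for $s<1$ the right-hand side $C(s-1)x^2e^x$ is negative, and the paper only invokes the lemma as $s\to1^+$), and it takes a genuinely different route from the paper's. The paper argues by removing the singularity: it sets
$$
f_s(x):=\frac{e^{-x}}{(s-1)x^2}\left(I_{s-\frac12}(x)-\frac{\sinh(x)x^{s-\frac32}}{2^{s-\frac12}\Gamma\left(s+\frac12\right)}\right),
$$
observes that $f_s$ has no pole at $s=1$ because $I_{\frac12}(x)=\sqrt{\tfrac{2}{\pi x}}\sinh(x)$ makes the difference vanish identically there, and then checks that $\lim_{x\to0}f_s(x)$ and $\lim_{x\to\infty}f_s(x)$ exist using the small- and large-argument asymptotics of $I_{s-\frac12}$ and $\sinh$; uniformity in $s$ is dispatched with the phrase ``we may consider $s$ as fixed.'' You instead work term-by-term on the power series: the duplication-formula collapse to $b_k(s)=\frac{2}{\sqrt\pi}\frac{\Gamma(k+\frac32)}{\Gamma(k+s+\frac12)}-\frac{1}{\Gamma(s+\frac12)}$ is correct, the digamma representation together with $0<\psi(y)<\log y$ and $1-e^{-t}\le t$ gives the uniform bound $|b_k(s)|\le C(s-1)\log(k+C')$ for $s\ge1$, and the factorial absorbs the logarithm in both regimes $x\le1$ and $x\ge1$ (where $x^{s-\frac12}\le x^2$ needs only $s\le\tfrac52$). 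What your approach buys is exactly what the paper's proof leaves implicit: genuine uniformity in $s$ with explicit constants, and a transparent source for the factor $(s-1)$, namely $b_k(1)=0$. What the paper's approach buys is brevity.

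One inaccuracy to flag: the case $s<1$ is \emph{not} ``entirely symmetric.'' For $s<1$ one has $h_k(s)>1$ and $h_k(s)-1\le(k+C')^{1-s}-1$, which for fixed $s<1$ is unbounded in $k$, so the clean estimate $|b_k(s)|\le C|s-1|\log(k+C')$ fails uniformly in $k$. The repair is routine on $[1-\delta,1]$ — use $e^t-1\le te^t$ to get $|b_k(s)|\le C|s-1|\log(k+C')(k+C')^{\delta}$ and let the factorial absorb the extra polynomial factor — but as stated the symmetry claim is false; since the application only needs $s\to1^+$, this does not affect the result.
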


\begin{proof}
	We have to show that
	\begin{equation*}
		f_s(x):=\frac{e^{-x}}{(s-1)x^2} \left(I_{s-\frac{1}{2}}(x)-\frac{\sinh(x)x^{s-\frac{3}{2}}}{2^{s-\frac{1}{2}}\Gamma\left(s+\frac{1}{2}\right)}\right)
	\end{equation*}
	is uniformly bounded in $s$ and $x$ for $s$ close to one. Using that $I_\frac{1}{2}(x)=\sqrt{\frac{2}{\pi x}}$, it is not hard to see that $f_s(x)$ does not have a pole in $s=1$. Thus, we may consider $s$ as fixed. Now the claim follows since $\lim\limits_{x\to 0}f_s(x)$ and $\lim\limits_{x\to \infty}f_s(x)$ exist. For this we use that
	\begin{align*}
		&I_{s-\frac{1}{2}}(x)=\left(\frac{x}{2}\right)^{s-\frac{1}{2}}\left(\frac{1}{\Gamma\left(s+\frac{1}{2}\right)}+O\left(x^2\right)\right), \quad
		\sinh(x)=x+O\left(x^3\right)\qquad \text{as}~ x\to 0,\\
		&I_{s-\frac{1}{2}}(x)\sim \frac{e^x}{\sqrt2\pi x}, \quad \sinh(x)\sim \frac{e^x}{2} \hspace{6.4cm} \text{as}~x\to \infty.\qedhere
	\end{align*}


\end{proof}

\begin{remark} As the proof of Lemma \ref{lem:Bessel_bound} shows, asymptotic formulas for \eqref{eq:I-sinh-bound} exist near zero and
$i\infty$.  For our purposes, the bound given in \eqref{eq:I-sinh-bound} suffices.
\end{remark}

\begin{proof}[Proof of Theorem \ref{thm: main 2_2}]
Note that \eqref{eq. defn gen funct F} does not converge absolutely at $s=1$.
So we consider
\begin{equation}\label{eq:double_series}
\mathcal F_{s,w}(z,\tau):= 2\pi\sum_{n\ge1}n^{\frac{w+1}{2}}F_{-n,s}(\tau)e^{2\pi inz},
\end{equation}
for $w\in\mathbb{C}$ close to zero. The bounds
\eqref{eq. bound for NP at other cusps}
for $F_{-n,s}$ imply that \eqref{eq:double_series} is
holomorphic in $w$ and $s$ close to $w=0$ and $s=1$
if $\im(z) > \im(\gamma\tau)$ for all $\gamma$.  As such, the discussion in Subsection 0.2
of \cite{Krantz} implies that $\mathcal F_{s,w}(z,\tau)$ can be written as a convergent
power series in $w$ and $s$ near $w=0$ and $s=1$.  In particular,
all directional limits toward $(1,0)$ are equal, thus
\begin{align*}
	\lim\limits_{(s,w)\rightarrow (1,0)}
	\mathcal F_{s,w}(z,\tau)&=
	\lim\limits_{s\rightarrow 1}
	\mathcal F_{s,0}(z,\tau)=
	\lim\limits_{s\rightarrow 1} \mathcal F_{s}(z,\tau)=
	\lim\limits_{s\rightarrow 1} \mathcal F_{s,1-s}(z,\tau).
\end{align*}
In other words, we have that
$$
F_{s,1-s}(z,\tau)= \sum\limits_{n\ge0} a_{n}(s-1)^{n}
$$
in a sufficiently small neighborhood of $s=1$. In particular, we have
$$
a_{0} = \lim\limits_{s\to 1^+} F_{s,1-s}(z,\tau) = \mathcal F_{1}(z,\tau).
$$
Combining this with Lemma \ref{lem:Bessel_bound}, calculations such as those leading to \eqref{eq:geometric_series} give
$$
\mathcal F_{s,1-s}(z,\tau) = \frac{2^{1-s}\sqrt{\pi}}{\Gamma\!\left(s-\frac{1}{2}\right)}
\sum_{\gamma\in\Gamma_\infty\setminus\Gamma}
\im(\gamma \tau)^{s-1}\left(\frac{1}{1-e^{2\pi i (z-\gamma\tau)}} - \frac{1}{1-e^{2\pi i (z-\overline{\gamma\tau})}}\right)
+O(s-1)
\,\,\,\,\,
\text{\rm as $s \rightarrow 1$},
$$
where the O-term depends on $z$ and $\tau$, and is uniform if $z$ and $\tau$ lie in compact sets.
Note
$$
\frac{2^{1-s}\sqrt{\pi}}{\Gamma\left(s-\frac{1}{2}\right)} =  1 + O(s-1)
\,\,\,\,\,
\text{\rm as $s \rightarrow 1$.}
$$
Because $\mathcal F_{s,1-s}(z,\tau)$ is holomorphic at $s=1$, we get that
\begin{equation}\label{eq:asymp_near_zero}
\mathcal F_{s,1-s}(z,\tau) =
\sum_{\gamma\in\Gamma_\infty\setminus\Gamma}
\im(\gamma \tau)^{s-1}\left(\frac{1}{1-e^{2\pi i (z-\gamma\tau)}} - \frac{1}{1-e^{2\pi i (z-\overline{\gamma\tau})}}\right)
+O(s-1)
\,\,\,\,\,
\text{\rm as $s \rightarrow 1$}
\end{equation}
For simplicity, we make the change of variables $w:=s-1$. From the above,
\begin{equation}\label{eq:weighted_Eisen}
\sum_{\gamma\in\Gamma_\infty\setminus\Gamma}
\im(\gamma \tau)^{w}\left(\frac{1}{1-e^{2\pi i (z-\gamma\tau)}} - \frac{1}{1-e^{2\pi i (z-\overline{\gamma\tau})}}\right)
\end{equation}
admits a holomorphic continuation from $\re(w) > 0$ to a neighborhood which contains $w = 0$. Furthermore, we have
$$
\mathcal F_{1}(z,\tau) = \lim\limits_{w \rightarrow 0^+}\sum_{\gamma\in\Gamma_\infty\setminus\Gamma}
\im(\gamma \tau)^{w}\left(\frac{1}{1-e^{2\pi i (z-\gamma\tau)}} - \frac{1}{1-e^{2\pi i (z-\overline{\gamma\tau})}}\right).
$$
Thus, we can rewrite \eqref{eq:asymp_near_zero} as
\begin{equation}\label{eq:asymp_formula}
\sum_{\gamma\in\Gamma_\infty\setminus\Gamma}
\im(\gamma \tau)^{w}\left(\frac{1}{1-e^{2\pi i (z-\gamma\tau)}} - \frac{1}{1-e^{2\pi i (z-\overline{\gamma\tau})}}\right)
= \mathcal F_{1}(z,\tau) + O(w)
\,\,\,\,\,
\text{\rm as $w \rightarrow 0^{+}$}.
\end{equation}

We next apply Theorem 11.1 of \cite{Korevaar} in conjunction with \eqref{eq:counting2_Eisen}.
To translate \eqref{eq:asymp_formula} to the notation of (11.2)
of \cite{Korevaar}, one sets $u=\xi^{-1}$, $\{\lambda_{n}\} = \{-\log(\im(\gamma\tau))\}$, and
$$
\{a_{n}\} = \left\{\frac{1}{1-e^{2\pi i (z-\gamma\tau)}} - \frac{1}{1-e^{2\pi i (z-\overline{\gamma\tau})}}
\right\}.
$$
Note that
$$
\frac{1}{1-e^{2\pi i (z-\gamma\tau)}} - \frac{1}{1-e^{2\pi i (z-\overline{\gamma\tau})}} =
 \frac{-\pi \im(\gamma\tau)}{\sin^{2}(2\pi(z-\re(\gamma\tau)))} + O_{z}\left(\im(\gamma\tau)^{2}\right)
$$
with a universal constant universally bounded from below, which follows because of
$z_2 > \im(\gamma\tau)$ holds for all $\gamma$.
As a result, (11.1) of \cite{Korevaar}
holds with $\theta(x) = \frac{x}{\log (x)}$ and $\omega(x) = \log (x)$ (up to asymptotic behavior, actually, since $\theta(x)$ is
more accurately defined in terms of the Lambert $W$-function).  Condition (11.2) of \cite{Korevaar}
follows from the application of the Mellin inversion formula to \eqref{eq:weighted_Eisen}, see
in particular Lemma 7.3 of \cite{JL93} where, in that notation, $\sigma = 0$ and $\sigma' = -\delta$ for some
$\delta > 0$. The conclusion of Theorem 11.1 of \cite{Korevaar}, as stated in (11.4),
then gives that
$$
\sum_{\genfrac{}{}{0pt}{}{\gamma\in\Gamma_\infty\setminus\Gamma}{\im(\gamma\tau) > \varepsilon}}
\left(\frac{1}{1-e^{2\pi i (z-\gamma\tau)}} - \frac{1}{1-e^{2\pi i (z-\overline{\gamma\tau})}}\right)
=\mathcal F_{1} (z,\tau) + O\left(\frac{1}{\log (\varepsilon)}\right)
\,\,\,\,\,
\text{\rm as $\varepsilon \rightarrow 0^{+}$}.
$$
This completes the proof of Theorem \ref{thm: main 2_2}.
\end{proof}

\section{Inner products}\label{sec:Inner_Products}

\subsection{A regularized inner product}\label{sec. regul inner prod}

Here, define a regularized inner product\footnote{As this extends Petersson's inner product for cusp forms, we choose the same symbol here.}. For $Y>0$, let
$$
P_Y:=\{z=z_1+iz_2: 0<z_1<1,\, z_2\geq Y\}.
$$
Assume that $Y>0$ is sufficiently large so that for $\ell\in\{1,\ldots,K\}$ the cuspidal zones $P_Y$ and $\sigma_\ell(P_Y)$ are disjoint.  Let
$$
\mathfrak F_{Y}:= \mathfrak F \Big\backslash \left( \bigcup\limits_{\ell=1}^K\sigma_\ell(P_Y)\cup P_Y \right).
$$
For $\mathfrak{z}_1, \ldots, \mathfrak{z}_\ell \in \mathfrak F$ and $\varepsilon_1, \ldots, \varepsilon_\ell\in\R^+$, define
$$
B_{\varepsilon_j}(\mathfrak{z}_j):= \left\{z\in \mathbb{H} : \left|\frac{z-\mathfrak{z}_j}{z-\overline{\mathfrak{z}_j}}\right|<\varepsilon_j\right\}.
$$
We then set
$$
\mathfrak F_{Y;\varepsilon_1, \ldots, \varepsilon_\ell } := \mathfrak F_{Y;\varepsilon_1, \ldots, \varepsilon_\ell }(\mathfrak{z}_1, \ldots, \mathfrak{z}_\ell) := \mathfrak F_{Y}\setminus \bigcup_{j=1}^{\ell} \left(B_{\varepsilon_j}(\mathfrak{z}_j)\cap \mathfrak F_Y\right).
$$
For functions $g$ and $h$ which satisfy weight $k\in\mathbb{Z}$ modularity and whose
singularities lie in $\{\mathfrak{z}_1, \ldots, \mathfrak{z}_\ell ,i\infty\}$, we define 
\begin{equation}\label{eq:regularized_integral}
\langle g,h\rangle:= \lim_{Y\to\infty}\lim_{\varepsilon_1\to0^+} \cdots \lim_{\varepsilon_\ell\to0^+} \int\limits_{\mathfrak F_{Y;\varepsilon_1, \ldots, \varepsilon_\ell }}g(w)h(w)w_2^k d\mu.
\end{equation}
We have the following lemma.

\begin{lemma}\label{lem. integrability of singularities}
  Let $h$, $H$ be meromorphic functions on $\H$ with (at most) isolated singularities at $\zf \in \H$. Assume that, with $c_{\mathfrak{z}}$, $C_\mathfrak{z}$ constants depending on $\zf$ and $h$ resp. $H$. Then we have
  $$
  h(z)=\frac{c_{\mathfrak{z}}}{(z-\zf)(z-\bar \zf)}+O(1), \quad H(z)= C_{\mathfrak{z}}\log|z-\zf| +O(1) \,\,\,\,\,\text{\rm as $z\to \zf$.}
  $$

  \begin{enumerate}[label=\rm(\arabic*),leftmargin=*]
  \item We have
  $$
  \lim_{\varepsilon \to 0^+}\int\limits_{B_\varepsilon(\zf)}h(z)d\mu=0.
  $$
  \item We have
 $$
  \lim_{\varepsilon \to 0^+}\int\limits_{B_\varepsilon(\zf)}h(z)H(z) d\mu=0.
  $$
  \end{enumerate}
\end{lemma}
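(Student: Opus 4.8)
The plan is to reduce both statements to the elementary fact that a function with a locally integrable singularity, integrated over a family of neighbourhoods shrinking to a single point, has integral tending to zero. The key observation is that $\mathfrak{z}$ is an \emph{interior} point of $\H$, so on a fixed small neighbourhood of $\mathfrak{z}$ the hyperbolic volume element $d\mu = z_2^{-2}\,dz_1\,dz_2$ is comparable (bounded above and below) to the Euclidean area element; it therefore suffices to estimate all integrals against Lebesgue measure in polar coordinates centred at $\mathfrak{z}$.

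First I would pin down the geometry of $B_\varepsilon(\mathfrak{z})$. Writing $w:=\frac{z-\mathfrak{z}}{z-\overline{\mathfrak{z}}}$, the biholomorphism sending $\H$ to the unit disk and $\mathfrak{z}$ to $0$, one has $B_\varepsilon(\mathfrak{z})=\{|w|<\varepsilon\}$, and inverting gives $z-\mathfrak{z}=\frac{2i\mathfrak{z}_2 w}{1-w}$ and $z-\overline{\mathfrak{z}}=\frac{2i\mathfrak{z}_2}{1-w}$. Hence $|z-\mathfrak{z}|\leq\frac{2\mathfrak{z}_2\varepsilon}{1-\varepsilon}$ on $B_\varepsilon(\mathfrak{z})$, so these hyperbolic disks are contained in Euclidean disks of radius $O(\varepsilon)$ about $\mathfrak{z}$ and shrink to $\{\mathfrak{z}\}$ as $\varepsilon\to0^+$. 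In particular $|z-\overline{\mathfrak{z}}|$ stays bounded below (by essentially $2\mathfrak{z}_2$), so the factor $(z-\overline{\mathfrak{z}})^{-1}$ appearing in the singular part of $h$ is harmless and only the $(z-\mathfrak{z})^{-1}$ factor matters.

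For part (1), the hypothesis gives $|h(z)|\ll |z-\mathfrak{z}|^{-1}+1$ near $\mathfrak{z}$. Passing to polar coordinates $z-\mathfrak{z}=\rho e^{i\theta}$ and using $d\mu\ll dz_1\,dz_2=\rho\,d\rho\,d\theta$, the singular contribution is $\ll\int_0^{O(\varepsilon)}\rho^{-1}\cdot\rho\,d\rho=O(\varepsilon)$, while the $O(1)$ part contributes at most a constant times the area $\mu(B_\varepsilon(\mathfrak{z}))=O(\varepsilon^2)$; both vanish as $\varepsilon\to0^+$. For part (2), multiplying the two local expansions of $h$ and $H$ produces four terms whose worst singularity is $\frac{\log|z-\mathfrak{z}|}{(z-\mathfrak{z})(z-\overline{\mathfrak{z}})}$, bounded in modulus by $|z-\mathfrak{z}|^{-1}\,|\log|z-\mathfrak{z}||$; the remaining terms are dominated by $|z-\mathfrak{z}|^{-1}$, $|\log|z-\mathfrak{z}||$, and $1$. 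In polar coordinates each is absolutely integrable, since $\int_0^{O(\varepsilon)}|\log\rho|\,d\rho=O(\varepsilon(1+|\log\varepsilon|))\to0$, and the same shrinking-domain argument forces every term to tend to $0$.

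I do not expect a genuine obstacle here: once the shape of $B_\varepsilon(\mathfrak{z})$ is understood, everything reduces to one-dimensional radial integrals of $\rho^{-1}$ and $\rho^{-1}|\log\rho|$ against $\rho\,d\rho$, both absolutely convergent. The only points requiring care are confirming that $B_\varepsilon(\mathfrak{z})$ really collapses to the single point $\mathfrak{z}$ (rather than, say, to a boundary arc), which the explicit change of variables settles, and that the comparison between $d\mu$ and Lebesgue measure is uniform on the fixed neighbourhood under consideration.
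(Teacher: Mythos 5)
Your proof is correct and follows essentially the same route as the paper: decompose $h$ and $H$ into their singular parts plus $O(1)$ remainders, pass to polar coordinates centered at $\mathfrak{z}$, and conclude from the vanishing of the radial integrals of $\rho^{-1}$ and $\rho^{-1}\left|\log\rho\right|$ against $\rho\,d\rho$ over shrinking disks. The only cosmetic difference is that the paper computes directly in the Petersson coordinates $\frac{z-\mathfrak{z}}{z-\overline{\mathfrak{z}}}=Re^{i\theta}$ with $d\mu=\frac{4R\,dR\,d\theta}{\left(1-R^2\right)^2}$, in which $B_\varepsilon(\mathfrak{z})$ is exactly $\{R<\varepsilon\}$, whereas you first trap $B_\varepsilon(\mathfrak{z})$ inside a Euclidean disk of radius $O(\varepsilon)$ and use $d\mu\asymp dz_1\,dz_2$ near the interior point $\mathfrak{z}$.
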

\begin{proof}
  It suffices to prove (2). For $z\in B_\varepsilon(\zf)$ we write
  \begin{equation}\label{expand}
  h(z)= \frac{c_{\mathfrak{z}}}{(z-\zf)(z-\bar \zf)} + h_1(z)
  \,\,\,\,\,
  \text{\rm and}
  \,\,\,\,\, H(z)=C_{\mathfrak{z}}\log|z-\zf| + H_1(z),
  \end{equation}
  where $h_1(z), H_1(z) =O(1)$, as $z\to \zf$.
  We use polar coordinates centered at $\zf$, following \cite{Pe54}, pp. 41--43
  (with $r=0$ in the notation of \cite{Pe54})
\begin{equation}\label{eq. polar coord}
  \frac{z-\zf}{z-\bar \zf}=R e^{i\theta}, \quad d\mu(z)=\frac{4RdRd\theta}{\left(1-R^2\right)^{2}}.
  \end{equation}
  This yields that
   $$
   \lim_{\varepsilon \to 0^+}\int\limits_{B_\varepsilon(\zf)}\log|z-\zf|d\mu(z)=\lim_{\varepsilon \to 0^+}\int\limits_{0}^\varepsilon \int\limits_0^{2\pi}\log\bigg|\frac{2\zf_2R}{1-Re^{i\theta}}\bigg|
  \frac{4RdRd\theta}{\left(1-R^2\right)^{2}}=0.
   $$
Using \eqref{expand}, we therefore have, as $z\to\zf$,
\begin{equation*}
  \lim_{\varepsilon \to 0^+}\int\limits_{B_\varepsilon(\zf)}h(z)H(z) d\mu=  c_{\mathfrak{z}} C_{\mathfrak{z}}\lim_{\varepsilon \to 0^+}\int\limits_{B_\varepsilon(\zf)} \frac{\log|z-\zf|}{(z-\zf)(z-\bar \zf)}d\mu(z)+ c_\mathfrak{z}\lim_{\varepsilon \to 0^+}\int\limits_{B_\varepsilon(\zf)} \frac{H_1(z)d\mu(z)}{(z-\zf)(z-\bar \zf)}.
  \end{equation*}
   Using again \eqref{eq. polar coord} in the first integral,
  we get
  $$
  \int\limits_{B_\varepsilon(\zf)} \frac{\log|z-\zf|}{(z-\zf)(z-\bar \zf)}d\mu(z)=-\frac{1}{4\zf_2^2}\int\limits_{0}^\varepsilon \int\limits_0^{2\pi} \frac{\left(1-Re^{i\theta}\right)^2}{Re^{i\theta}}\log\left|\frac{2\zf_2R}{1-Re^{i\theta}}\right|
  \frac{4RdRd\theta}{\left(1-R^2\right)^{2}}.
  $$
  As above, the integral vanishes as $\varepsilon \to 0^+$. Analogously, we deduce that
  $$
  \lim_{\varepsilon \to 0^+}\int\limits_{B_\varepsilon(\zf)} \frac{H_1(z)}{(z-\zf)(z-\bar \zf)}d\mu(z)=0.
  $$
  Thus, (2) is established, and the lemma follows.
\end{proof}

\begin{remark}\label{rem. integrability}
  \Cref{lem. integrability of singularities} shows that meromorphic functions on $\H$ having singularities only at finitely many isolated points $\zf_1,\ldots,\zf_\ell \in \H$ such that their asymptotic behavior as $z\to \zf_j$ (for $j\in\{1,\ldots,\ell\}$), is  $\frac{c_{\mathfrak{z}_j}}{(z-\zf_j)(z-\overline{\zf_j})}$ or  $\frac{C_{\zf_j}}{(z-\zf_j)(z-\overline{\zf_j} )}\log|z-\zf_j|$ can be
  considered as being integrable in the sense of \eqref{eq:regularized_integral}.
\end{remark}

\subsection{Expressing $\mathbb{F}(z,\tau)$ as inner product}

We express $\mathbb{F}(z,\tau)$ as a regularized inner product.

\begin{proposition}\label{prop:F_inner_product_firstformula}
The series $\mathbb{F}(z,\tau)$ converges absolutely for all $z$ such that $z_2>\im(\gamma \tau)$ for all $\gamma\in\Gamma$.
Moreover, for such $z$, we have
\begin{equation} \label{eq. gen series as inner product}
\mathbb{F}(z,\tau)=-\left\langle R_{0,z}(\mathcal{G}(z,\cdot)) - \frac1{\operatorname{vol}(\Gamma\backslash \mathbb{H})z_2},  \mathcal{G}(\cdot,\tau)\right\rangle.
\end{equation}
\end{proposition}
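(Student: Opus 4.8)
The plan is to split the proof into the convergence statement, a reduction of the left slot of the pairing, and the identification of $\mathbb{F}(z,\tau)$ with the regularized inner product by inverting the Laplacian against $\mathcal{F}(z,\cdot)$ through the Green's kernel $\mathcal{G}$.

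First I would dispose of the convergence claim. Writing $\mathbb{F}(z,\tau)=2\pi\sum_{n\ge1}\sqrt n[\frac{\partial}{\partial s}F_{-n,s}(\tau)]_{s=1}e^{2\pi inz}$, the coefficients are controlled by combining \eqref{eq. NP deriv bound} near $i\infty$ with \eqref{eq. NP growth at other cusps} near the remaining cusps, which yields at most the growth $e^{2\pi n\max_\gamma\im(\gamma\tau)}$ in $n$ up to polynomial factors. Since $|e^{2\pi inz}|=e^{-2\pi nz_2}$, the hypothesis $z_2>\im(\gamma\tau)$ for all $\gamma$ forces exponential decay of the summands, giving absolute and locally uniform convergence. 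For the pairing itself, I would invoke Theorem \ref{thm: main 1}(1) to rewrite the left slot as $R_{0,z}(\mathcal{G}(z,\cdot))-\frac{1}{\vol(\Gamma\backslash\mathbb{H})z_2}=\mathcal{F}(z,\cdot)$; this also supplies the analytic continuation of $w\mapsto\mathcal{F}(z,w)$ beyond the region $w_2<z_2$ as a weight-zero polar harmonic Maass form (Theorem \ref{thm: main 1}(2)), which is precisely what makes the integral over $\mathfrak F$ meaningful near the cusps. It thus suffices to prove $\mathbb{F}(z,\tau)=-\langle\mathcal{F}(z,\cdot),\mathcal{G}(\cdot,\tau)\rangle$.

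The conceptual mechanism is a differentiated eigenvalue equation. Since each $\mathcal{F}_s(z,\cdot)$ is a $\Delta_0$-eigenfunction of eigenvalue $s(1-s)$ and $\mathbb{F}=[\frac{\partial}{\partial s}\mathcal{F}_s]_{s=1}$, differentiating at $s=1$ (where $1-2s=-1$) gives $\Delta_{0,\tau}\mathbb{F}(z,\tau)=-\mathcal{F}(z,\tau)$; moreover both $\mathcal{F}(z,\cdot)$ and $\mathbb{F}(z,\cdot)$ have vanishing average over $M$, since unfolding yields $\int_{\mathfrak F}F_{-n,s}\,d\mu=0$ for all $s$. On the other hand, \eqref{G} together with the singularity \eqref{eq. greens function log singul expr} exhibits $\mathcal{G}(\cdot,\tau)$ as the Green's kernel of $\Delta_0$: it is annihilated by the Laplacian away from $w=\tau$, carries the prescribed logarithmic singularity at $w=\tau$, and the pole of $G_s$ coming from the constant eigenfunction is exactly cancelled by the Eisenstein term $E(\tau;s)$. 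Accordingly I would apply Green's second identity to $\mathbb{F}(z,\cdot)$ and $\mathcal{G}(\cdot,\tau)$ on the truncated domain $\mathfrak F_Y$ with the singularity at $w=\tau$ excised by a small geodesic ball (integrable by \Cref{lem. integrability of singularities}). Using $\Delta_{0,w}\mathbb{F}=-\mathcal{F}$ and the harmonicity of $\mathcal{G}$ away from $w=\tau$, the interior integral collapses to $\langle\mathcal{F}(z,\cdot),\mathcal{G}(\cdot,\tau)\rangle$ plus the (vanishing) average term, while the small-circle boundary reproduces $\mathbb{F}(z,\tau)$ via the logarithmic singularity; the identity is thereby reduced to the evaluation of the cuspidal boundary integrals.

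The hard part is exactly these cuspidal boundary contributions: because $\mathcal{F}(z,\cdot)$ has linear exponential growth and $\mathcal{G}(\cdot,\tau)$ has logarithmic growth at the cusps, the pairing exists only in the regularized sense \eqref{eq:regularized_integral}, and the horocycle integrals at height $Y$ at $i\infty$ and at each $\varrho_\ell$ do not vanish for trivial reasons; they are what pin down the correct overall sign and constant. Evaluating and bounding them is precisely where the asymptotics of Section \ref{sec:Greens_comps} enter — Corollary \ref{cor. green's bounds other cusps} for $[\frac{\partial}{\partial s}(G_s+E)]_{s=1}$ and its normal derivative, together with the Niebur--Poincar\'e bounds \eqref{eq. bound for norm der NP at other cusps} and \eqref{eq. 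NP growth at other cusps} — showing that the spurious parts are $O(\log(Y)/Y)$ up to Kronecker-limit factors of subexponential growth, hence vanish as $Y\to\infty$. Two secondary points requiring care are the interchange of the series defining $\mathcal{F}(z,\cdot)$ with the regularized integral and with $\Delta_{0,\tau}$, and the tracking of the $s\to1$ cancellation already built into the definition of $\mathcal{G}$ and into the subtraction $\frac{1}{\vol(\Gamma\backslash\mathbb{H})z_2}$ in the left slot.
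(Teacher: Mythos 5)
Your convergence argument and your reduction via Theorem \ref{thm: main 1} (1) are fine, and your key idea --- the differentiated eigenvalue equation $\Delta_{0,\tau}\mathbb{F}(z,\tau)=-\mathcal{F}(z,\tau)$, fed into Green's identity for the pair $\mathbb{F}(z,\cdot)$, $\mathcal{G}(\cdot,\tau)$ so that the main term comes from the logarithmic singularity at $w=\tau$ --- is genuinely different from the paper, which instead proves the exact termwise identity $\langle F_{-n,1},\mathcal{G}(\cdot,\tau)\rangle=-[\frac{\partial}{\partial s}F_{-n,s}(\tau)]_{s=1}$ (following the Green's-formula analysis of \cite{CJS23}, with errors uniform in $n$) and only then assembles the generating series. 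However, your argument has genuine gaps. The first is that $\mathcal{G}(\cdot,\tau)$ is \emph{not} harmonic in the integration variable: since $E(\tau;s)$ is constant in $w$, it cancels the pole of $G_s$ in $s$ but cannot cancel the constant-eigenfunction contribution to the Laplacian, and one computes $\Delta_{0,w}\mathcal{G}(w,\tau)=\lim_{s\to 1}s(1-s)G_s(w,\tau)=\frac{1}{\vol(\Gamma\backslash\mathbb{H})}$ for $w\notin\Gamma\tau$; harmonicity holds only in the \emph{second} variable. Green's identity therefore produces the extra interior term $\frac{1}{\vol(\Gamma\backslash\mathbb{H})}\int_{\mathfrak F_Y}\mathbb{F}(z,w)\,d\mu(w)$, and your disposal of it fails twice over: the unfolding argument for $\int_{\mathfrak F}F_{-n,s}\,d\mu=0$ is illegitimate because the unfolded integrand contains $\sqrt{w_2}\,I_{s-\frac12}(2\pi n w_2)$, which grows exponentially, so there is no absolute convergence and Fubini does not apply; and the vanishing-average claim is in any case false at the level of the generating functions. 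Indeed, combining Theorem \ref{thm: main 1} (1) with \eqref{eq. G in terms of g} and the identity $\langle R_{0,z}(g(z,\cdot)),1\rangle=0$ from Section \ref{sec:Proof_of_gen_series_deriv}, one finds $\langle \mathcal{F}(z,\cdot),1\rangle=-\frac{1}{z_2}\neq 0$, directly contradicting your assertion; similarly the average of $\mathbb{F}(z,\cdot)$ (the one that actually enters) cannot be assumed to vanish --- by Theorem \ref{thm. gen series of deriv} it carries the summand $-P(w)/(\vol(\Gamma\backslash\mathbb{H})z_2)$ with $P$ unbounded at the cusp --- and controlling it is of exactly the same difficulty as the cuspidal boundary analysis you defer.

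The second, more fundamental, gap is one of domains and order of limits. Green's identity on $\mathfrak F_Y$ requires $w\mapsto\mathbb{F}(z,w)$ to be defined and smooth on all of $\mathfrak F_Y$, but this function is given by its series only where $\im(\gamma w)<z_2$ for all $\gamma$. For $\mathcal{F}(z,\cdot)$ you rightly invoke Theorem \ref{thm: main 1} to continue past that barrier, but no analogous continuation of $\mathbb{F}(z,\cdot)$ in its second variable is available at this stage --- producing one is in effect what Proposition \ref{prop:F_inner_product_firstformula} and Theorem \ref{thm. gen series of deriv} accomplish. Consequently your identity is only meaningful for $Y<z_2$, and the limit $Y\to\infty$ cannot be taken at fixed $z$ inside your argument; moreover your horocycle estimates would need bounds on $\mathbb{F}(z,w)$ and its normal derivative at height $Y$, precisely at the edge of convergence (note also that Corollary \ref{cor. green's bounds other cusps}, which you cite, bounds the $s$-derivative of $G_s+E$ and is tailored to the paper's termwise scheme, not to a pairing against $\mathcal{G}$ itself). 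The paper meets the same obstruction and defuses it structurally: all $Y$-dependent work is done termwise with the globally defined $F_{-n,1}$, the sum/integral exchange is performed at finite $Y$ under the hypothesis $z_2>Y$, and that hypothesis is then removed by real-analytic continuation in $z$ before letting $Y\to\infty$. Your proposal contains no counterpart of this continuation step, and without it the argument does not close.
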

\begin{proof}
  The absolute convergence of $\mathbb F(z,\tau)$ follows from \eqref{eq. NP deriv bound0}.
  To show \eqref{eq. gen series as inner product} we follow the proof of Theorem 1 of \cite{CJS23}.
First, we prove that, for $n\in\N$ and $\tau\in \mathfrak F$,
  $$
\langle F_{-n,1}, \mathcal{G}(\cdot,\tau) \rangle=-\left[\frac{\partial}{\partial s}F_{-n,s}(\tau)\right]_{s=1}.
$$
 We assume that $K\geq 1$.
 Applying Green's formula if $K\geq 1$ yields additional terms along the horocycles $\sigma_\ell P(Y)$ ($\ell\in\{1,\ldots,K\}$).
 Hence, (39) from \cite{CJS23} becomes
\begin{align}
  &\langle F_{-n,1}(\cdot), \mathcal{G}(\cdot,\tau) \rangle= \lim_{Y\to \infty} \int\limits_{\mathfrak F_Y} F_{-n,1}(w)\mathcal{G}(w,\tau)d\mu(w)\label{eq. inner prod general}\\
  &=\lim_{Y\to\infty} \left[\frac{\partial}{\partial s}\left(\int\limits_0^1 \left(\left[\frac{\partial}{\partial w_2}F_{-n,1}(w)\right]_{w_2=Y}\mathcal{G}_s(w,\tau) - F_{-n,1}(w)\left[\frac{\partial}{\partial w_2}\mathcal{G}_s(w,\tau)\right]_{w_2=Y} \right) dw_1\right)\right]_{s=1}\nonumber\\
  +\!&\sum_{\ell=1}^K \lim_{Y\to\infty}\! \left[\frac{\partial}{\partial s}\!\left(\int_0^1\!\! \left(\left[\frac{\partial}{\partial\mathrm n}F_{-n,1}(\sigma_\ell w)\right]_{\!w_2=Y}\hspace{-0.25cm}\mathcal{G}_s(\sigma_\ell w,\tau) \!-\! F_{-n,1}(\sigma_\ell w)\!\left[\frac{\partial}{\partial\mathrm n}\mathcal{G}_s(\sigma_\ell w,\tau)\right]_{\!w_2=Y} \right) \!dw_1\right)\right]_{\!s=1},\nonumber
\end{align}
where $\mathcal{G}_s(w,\tau):=G_s(w,\tau)+E(\tau;s)$.
The sum over $\ell$ is estimated by combining Corollary \ref{cor. green's bounds other cusps}  with \eqref{eq. bound for NP at other cusps} and \eqref{eq. bound for norm der NP at other cusps} to deduce that
\begin{multline*}
\frac{\partial}{\partial s}\!\left(\int_0^1\!\! \left(\left[\frac{\partial}{\partial\mathrm n}F_{-n,1}(\sigma_\ell w)\right]_{\!w_2=Y}\hspace{-0.25cm}\mathcal{G}_s(\sigma_\ell w,\tau) \!-\! F_{-n,1}(\sigma_\ell w)\!\left[\frac{\partial}{\partial\mathrm n}\mathcal{G}_s(\sigma_\ell w,\tau)\right]_{\!w_2=Y} \right) \!dw_1\right)_{\!s=1}\\ =O\left(\frac{P_{1}(\tau)+ \left(P_{\varrho_\ell,i\infty}(\tau)+\!P_{\ell,0,1}(\tau)\right) \log (Y)+\log^2(Y)  }{Y}\right).
\end{multline*}

Now we investigate the first term, by analyzing\footnote{The Fourier expansions at  $i\infty$ for $K=0$ and $K\geq 1$ are the same, so the arguments using those expansions from \cite{CJS23} also apply for $K\geq 1$.} the proof of Theorem 1 of \cite{CJS23}. Namely, there it was proved that, for $Y>0$ sufficiently large, the term
  $$
\left[\frac{\partial}{\partial s}\left(\int\limits_0^1 \left(\left[\frac{\partial}{\partial w_2}F_{-n,1}(w)\right]_{w_2=Y}\mathcal{G}_s(w,\tau) - F_{-n,1}(w)\left[\frac{\partial}{\partial w_2}\mathcal{G}_s(w,\tau)\right]_{w_2=Y} \right) dw_1\right)\right]_{s=1}
  $$
equals a sum of three terms (see (40) of \cite{CJS23})  and each of those three terms, which are in our notation $[\frac{\partial}{\partial s}T_k(Y,s;\tau)]_{s=1}$ ($k\in\{1,2,3\}$), were evaluated. From the bounds on those three terms, one can see that for $Y> 2\tau_2+\frac4{c_{\Gamma}}$ (with $c_\Gamma$ the smallest positive left lower entry of matrices from $\Gamma$) one has
$$
	\left[\frac{\partial}{\partial s}T_1(Y,s;\tau)\right]_{s=1}	=-\left[\frac{\partial}{\partial s}F_{-n,s}(\tau)\right]_{s=1}\left(1+O\left(\frac{1}{nY}\right)\right),
$$
where the implied constant is independent of $\tau$. Next we have
$$
\left[\frac{\partial}{\partial s}T_2(Y,s;\tau)\right]_{s=1}	=O\left(\frac{P(\tau)}{Y}\right),\quad
\left[\frac{\partial}{\partial s}T_3(Y,s;\tau)\right]_{s=1}=O\left(\frac{e^{n\tau_2}}{Y}\right).
$$
Therefore, by \eqref{eq. NP deriv bound} and \eqref{eq. inner prod general}, using that $P$, $P_1$, and $P_{\varrho_\ell,i\infty}$ grow sub-exponentially in $\tau_2$,
$$
\int\limits_{\mathfrak F_Y} F_{-n,1}(w)\mathcal{G}(w,\tau)d\mu(w) = - \left[\frac{\partial}{\partial s}F_{-n,s}(\tau)\right]_{s=1} + O\left(e^{n\tau_2}\frac{\log^2 (Y)}{Y}\right),
$$
for $Y> 2\tau_2+\frac4{c_{\Gamma}}$, as $Y\to \infty$, where the implied constant does not depend on $z$ and $\tau$. Hence, for $z$ satisfying $z_2>\im(\gamma \tau)$ for all $\gamma\in\Gamma$, we have, as $Y\to \infty$,
$$
2\pi\sum_{n\ge1} \sqrt{n}\int\limits_{\mathfrak F_Y} F_{-n,1}(w)\mathcal{G}(w,\tau)d\mu(w)e^{2\pi i nz}= -\mathbb{F}(z,\tau)+O\left(\frac{\log^2 (Y)}{Y}\right).
$$
If we fix $Y> 2\tau_2+\frac{4}{c_{\Gamma}}$ sufficiently large, then it is immediate from \cite{Ne73} that
$|\sqrt{n}F_{-n,1}(w)e^{2\pi i nz}|$ decays exponentially in $n$, for $z_2>Y$. Thus, by the Dominated Convergence Theorem,
we conclude
$$
2\pi\sum_{n\ge1}\sqrt{n}\int\limits_{\mathfrak F_Y} F_{-n,1}(w)\mathcal{G}(w,\tau)d\mu(w)e^{2\pi i nz}= \int\limits_{\mathfrak F_Y}\mathcal F(z,w)\mathcal{G}(w,\tau)d\mu(w).
$$
Therefore, we have
$$
\int\limits_{\mathfrak F_Y}\mathcal F(z,w)\mathcal{G}(w,\tau)d\mu(w)=-\mathbb{F}(z,\tau)+
O\left(\frac{\log^2 (Y)}{Y}\right).
$$
Hence, we obtain
$$
\mathbb{F}(z,\tau)=-\langle \mathcal F(z,\cdot), \mathcal{G}(\cdot ,\tau) \rangle.
$$
This, combined with Theorem \ref{thm: main 1} (1), the proof follow from analytic continuation.
\end{proof}

\section{Proof of Theorem \ref{thm. gen series of deriv}}\label{sec:Proof_of_gen_series_deriv}
In this section we prove \Cref{thm. gen series of deriv}.

\begin{proof}[Proof of \Cref{thm. gen series of deriv}]

As described in \eqref{eq. greens function log singul expr}, $\mathcal{G}(w,\tau)$ has a logarithmic singularity in the neighbourhood of
$\tau \in\mathfrak F$. Combined with arguments from Subsection 5.3 of \cite{CJS23} we get that
	\begin{equation}\label{eq. inner of 1 and G}
	\langle 1, \mathcal{G}(\cdot,\tau) \rangle= \vol(\Gamma\backslash \mathbb{H})\beta - P(\tau)
	\end{equation}
which corresponds to the last displayed equation in Subsection 5.3 of \cite{CJS23}.
We now discuss this expression in further detail.
In the notation of \cite{Ar15}, the resolvent kernel $G_s(z,w)$ admits a Laurent expansion of the form\footnote{The constant term $g(z,w)$ is called the {\it hyperbolic Green's function}. It is an $L^2$-function which is orthogonal to constant functions.}
	$$
	G_s(z,w)=\frac{1}{\operatorname{vol}(\Gamma\backslash \mathbb{H})s(1-s)}-\frac{g(z,w)}{4\pi}+O(s-1)
\,\,\,\,\,\text{\rm as $s\to1$.}
	$$
Combining this with \eqref{KronLimitPArGen} we deduce that
	\begin{equation}\label{eq. G in terms of g}
	\mathcal{G}(z,w)= -\frac{g(z,w)}{4\pi}+\beta-\frac{P(w)}{\vol(\Gamma\backslash \mathbb{H})}.
	\end{equation}
Therefore, since $g(z,w)$ is an $L^2$-function which is orthogonal to constant functions, we have
$$
\langle 1, \mathcal{G}(\cdot,\tau) \rangle = \int\limits_{\mathfrak{F}}\left(-\frac{g(w,\tau)}{4\pi}+\beta-\frac{P(\tau)}{\vol(\Gamma\backslash \mathbb{H})}\right)d\mu(w)= \vol(\Gamma\backslash \mathbb{H})\beta - P(\tau),
$$
which proves \eqref{eq. inner of 1 and G}.
Continuing, we can combine \eqref{eq. inner of 1 and G} with Proposition \ref{prop:F_inner_product_firstformula} to get that
	\begin{equation}\label{eq. mathbb F first expr}
	\mathbb{F}(z,\tau)= -\langle R_{0,z}(\mathcal{G}(z,\cdot)), \mathcal{G}(\cdot,\tau)\rangle +\frac{\beta}{z_2}-\frac{P(\tau)}{\vol(\Gamma\backslash \mathbb{H})z_2}.
	\end{equation}

We next study \eqref{eq. mathbb F first expr} in detail.	From \eqref{eq. G in terms of g} we get
	$$
	R_{0,z}(\mathcal{G}(z,w))= R_{0,z}\left(-\frac{g(z,w)}{4\pi}\right),
	$$
	Hence, we can write
	\begin{align} \label{eq. change inner prod and deriv}\notag
	-\langle R_{0,z}(\mathcal{G}(z,\cdot)), \mathcal{G}(\cdot,\tau)\rangle &= \left\langle \frac{1}{4\pi}R_{0,z}(g(z,\cdot)), -\frac{g(\cdot,\tau)}{4\pi}+ \beta-\frac{P(\tau)}{\vol(\Gamma\backslash \mathbb{H})} \right\rangle
	\\&\!\!\!\!\!\! = -\frac{1}{16\pi^2}\langle R_{0,z}(g(z,\cdot)), g(\cdot,\tau)\rangle+\frac{1}{4\pi} \left( \beta-\frac{P(\tau)}{\vol(\Gamma\backslash \mathbb{H})}\right)\langle R_{0,z}(g(z,\cdot)),1 \rangle.
	\end{align}
Now, we prove that the term in  \eqref{eq. change inner prod and deriv} vanishes.
By Theorem \ref{thm: main 1} (2), the function
$$
w\mapsto R_{0,z} (\mathcal{G}(z, w))= R_{0,z}\left(-\frac{g(z,w)}{4\pi}\right)
$$
is a weight zero polar harmonic Maass form.  Furthermore, by
Theorem \ref{thm: main 1} (3), its only pole in $\mathfrak F$ is a simple pole at $w=z$.  As such, we have, as $w\to z$,
	$$
	R_{0,z}  \left(\mathcal{G}(z, w)\right)= -\frac{\mathrm{Stab}_z z_2}{\pi (w-z)(w-\bar z)} + O(1).
	$$
	By Lemma \ref{lem. integrability of singularities} this means that $R_{0,z} (\mathcal{G}(z, w))$ is bounded by an integrable function on $\mathfrak F$, uniformly so in $z$. Therefore, the interchange of derivative in $z$ and the integral is justified, which gives that
	\begin{equation*}
	\langle R_{0,z}(g(z,\cdot)),1 \rangle= \int\limits_{\mathfrak F} R_{0,z}(g(z,w))d\mu(w)=R_{0,z}\left(\,\int\limits_{\mathfrak F} g(z,w)d\mu(w)\right)=0.
	\end{equation*}
To be precise, we use that,  by Remark \ref{rem. integrability}, the regularized integral
\begin{equation}\label{eq:reg_int}
\int\limits_{\mathfrak F} g(z,w)d\mu(w)
\end{equation}
equals the integral over $\mathfrak F$ and \eqref{eq:reg_int} vanishes,
see \cite{Ar15}. Continuing, the above shows that
	$$
	-\langle R_{0,z}(\mathcal{G}(z,\cdot)), \mathcal{G}(\cdot,\tau)\rangle=-\frac{1}{16\pi^2}\langle R_{0,z}(g(z,\cdot)), g(\cdot,\tau)\rangle.
	$$
	We next claim that one can interchange $R_{0,z}$ with the regularized integral, so then
	\begin{equation}\label{eq. change inner prod and deriv 1}
	-\langle R_{0,z}(\mathcal{G}(z,\cdot)), \mathcal{G}(\cdot,\tau)\rangle=-\frac{1}{16\pi^2}R_{0,z}\left(\langle g(z,\cdot), g(\cdot,\tau)\rangle\right).
	\end{equation}
	To prove \eqref{eq. change inner prod and deriv 1} it suffices to show that $H(w) := |R_{0,z}(g(z,w)g(w,\tau))|$
	is bounded by an integrable function, uniformly in $z$ and $\tau$.
	Indeed, for $z\neq \tau$, $H$ is bounded outside a union of two small hyperbolic discs centered at
$z$ and $\tau$. Furthermore, as $w\to z$, $H$ is dominated by a constant multiple of $|(w-z)(w-\bar z)|^{-1}$,
which is integrable by Lemma
\ref{lem. integrability of singularities}. Also, as $w\to \tau$, $H$ is bounded by a constant multiple of $\log|w-\tau|$ which is
also integrable. If $z=\tau$, then $H$ is bounded outside a small hyperbolic disc centered at $z=\tau$,
while as $w\to z$, if $z=\tau$, $H$ is dominated by a constant multiple of $|(w-z)(w-\bar z)|^{-1}|\log|w-z||$, which is integrable.
Thus $H$ is bounded by an
integrable function,  Therefore, the interchange is allowed.
	
	To complete the proof of Theorem \ref{thm. gen series of deriv},
we compute the inner product on the right-hand side of \eqref{eq. change inner prod and deriv 1}. To do so, we use the expression of $\frac{1}{4\pi}g(\cdot,\tau)$ in terms of the heat kernel on $M=\Gamma\backslash \mathbb{H}$. Namely, according to \cite{Ar15} we have
for $z\neq w$ that
	$$
	\frac{g(z,w)}{4\pi}=\int\limits_0^\infty\left(K_M(t;z,w) - \frac{1}{\vol(\Gamma\backslash \mathbb{H})}\right)dt
	$$
	and the integral converges absolutely.\footnote{This follows from the spectral expansion of the heat kernel ((19) of \cite{Ar15}) and the fact that the constant term in the spectral expansion equals $\frac{1}{\vol(\Gamma\backslash \mathbb{H})}$.} Since the heat kernel is real-valued we deduce
	\begin{align*}
	\frac{1}{16\pi^2}\langle g(z,\cdot),g(\cdot,\tau) \rangle &=\int\limits_M \int\limits_0^\infty\left(K_M(t;z,w) - \frac{1}{\vol(\Gamma\backslash \mathbb{H})}\right)dt \int\limits_0^\infty\left(K_M(x;w,\tau) - \frac{1}{\vol(\Gamma\backslash \mathbb{H})}\right)dxd\mu(w)\\
	&=\int\limits_0^\infty\int\limits_0^\infty \int\limits_M  \left(K_M(t;z,w) - \frac{1}{\vol(\Gamma\backslash \mathbb{H})}\right) \left(K_M(x;w,\tau) - \frac{1}{\vol(\Gamma\backslash \mathbb{H})}\right)d\mu(w) dx dt,
	\end{align*}
	where the absolute convergence of the integrals in the variables $t,\, x$, for $w\in M\setminus\{ z,\tau\}$  justifies the application of the Fubini--Tonelli Theorem. The semigroup property for the heat kernel states that
	$$
	\int\limits_M K_M(t;z,w)K_M(x;w,\tau) d\mu(w)=K_M(t+x;z,\tau).
	$$
	Also, we have
	$$
	\int\limits_M K_M(t;z,w) d\mu(w)= \int\limits_M K_M(x;w,\tau) d\mu(w)=1.
	$$
	Therefore\footnote{For more details on the semigroup property and stochastic completeness of the heat kernel on manifolds, see e.g. \cite{Gr99}, (2.11) and Subsection 3.3.}, we have
	\begin{equation*}
	\frac{1}{16\pi^2}\langle g(z,\cdot),g(\cdot,\tau) \rangle =\int\limits_0^\infty\int\limits_0^\infty \left(K_M(t+x;z,\tau) - \frac{1}{\vol(\Gamma\backslash \mathbb{H})}\right)dtdx.
	\end{equation*}
	
	Now we use the spectral expansion of the heat kernel; see for example \cite{Ar15} and references therein.
Specifically, for $z\neq \tau$ we have that
	\begin{multline*}
	K_M(t;z,\tau)= \sum_{j\ge0}  u_j(z)u_j(\tau)e^{-\lambda_jt}+\frac{1}{4\pi}\int\limits_{\R} E\left(z;\frac12+ir\right)E\left(\tau;\frac12-ir\right)e^{-\left(\frac{1}{4}+r^2\right)t}dr\\ + \sum_{\ell=1}^K \frac{1}{4\pi}\int\limits_{\R} E_{\varrho_\ell}\left(z;\frac12+ir\right)E_{\varrho_\ell}\left(\tau;\frac12-ir\right)e^{-\left(\frac{1}{4}+r^2\right)t}dr,
	\end{multline*}
	where $0=\lambda_0<\lambda_1<...$ is the sequence of the discrete eigenvalues of the Laplacian $\Delta_0$ on $M$ and $u_j(z)$
are the associated real-valued eigenfunctions, normalized so that their $L^2$-norm equals one.
Note that if there are no cusps other than $i\infty$, then we consider the sum over $\ell$ to be identically zero.
In particular, we have
	\begin{align*}
	K_M(t+x;z,\tau) &- \frac{1}{\vol(\Gamma\backslash \mathbb{H})}\\
	= \sum_{j\ge1}& e^{-\lambda_j(t+x)} u_j(z)u_j(\tau)+\frac{1}{4\pi}\int\limits_{\R} E\left(z;\frac12+ir\right)E\left(\tau;\frac12-ir\right)e^{-\left(\frac{1}{4}+r^2\right)(t+x)}dr\\
	&\hspace{3.3cm}+ \sum_{\ell=1}^K \frac{1}{4\pi}\int\limits_{\R} E_{\varrho_\ell}\left(z;\frac12+ir\right)E_{\varrho_\ell}\left(\tau;\frac12-ir\right)
	e^{-\left(\frac{1}{4}+r^2\right)(t+x)}dr,
	\end{align*}
so then
	\begin{align}\label{eq. spectral exp}\notag
	\int\limits_0^\infty\int\limits_0^\infty &\left(K_M(t+x;z,\tau) - \frac{1}{\vol(\Gamma\backslash \mathbb{H})}\right)dtdx \\&
\notag=\sum_{j\ge1} h(r_j) u_j(z)u_j(w)+\frac{1}{4\pi}\int\limits_{\R} E\left(z;\frac12+ir\right)E\left(\tau;\frac12-ir\right)h(r)dr\\&\hspace{5.6cm}+ \sum_{\ell=1}^K \frac{1}{4\pi}\int\limits_{\R} E_{\varrho_\ell}\left(z;\frac12+ir\right)E_{\varrho_\ell}\left(\tau;\frac12-ir\right)h(r)dr,
	\end{align}
	where
$\lambda_j=:\tfrac1{4}+r_j^2$ and $h(t):=(\tfrac1{4}+t^2)^{-2}$.
	The function $h$ satisfies condition (1.63) of \cite{Iwa02}. Hence, according to Theorem 7.4 of \cite{Iwa02}, the right-hand side of
\eqref{eq. spectral exp} is the spectral expansion of an automorphic kernel
$$
\mathcal{K^*}(z,\tau)=\sum_{\gamma\in \Gamma} k^* (u(z,\gamma \tau))
$$
	associated to the point pair invariant ${k^*}(u)$ which is the inverse Selberg/Harish--Chandra transform of $h$. In other words, the above
arguments prove that
	$$
	-\langle R_{0,z}(\mathcal{G}(z,\cdot)), \mathcal{G}(\cdot,\tau)\rangle= -R_{0,z}\left(\mathcal{K^*}(z,\tau)\right).
	$$

	From \eqref{eq. mathbb F first expr}, to order to complete the proof of Theorem \ref{thm. gen series of deriv},
we need to show that ${k^*}(u)=k(u)$, where $k(u)$ is defined in \Cref{thm. gen series of deriv}. By (1.64')  of \cite{Iwa02}
	$$
	{k^*}(u)= \frac{1}{4\pi}\int\limits_{\R} F_{\frac12+it}(u)\frac{t\tanh(\pi t)}{\left(\tfrac1{4}+t^2\right)^2}dt,
	$$
	where $F_s(u):={}_2F_1(s,1-s,1;u)$. By \cite{Iwa02}, in particular by the paragraph above (1.43) we have that
$F_s(u)=P_{-s}(1+2u)$  where $P_\nu$ denotes the Legendre function.  Combining with 8.733 of \cite{GR07}, we have that $F_{\frac12+it}(u)=P_{-\frac12+it}(1+2u)$. Hence, for $z\neq \tau$ we obtain
	\begin{equation}\label{eq. widetilde k}
	{k^*}(u(z,\tau)) \!=\! \frac{1}{2\pi} \!\int\limits_{0}^{\infty}\!\! P_{\!-\frac12+it}(1+2u(z,\tau))\frac{t\tanh(\pi t)}{\left(\frac{1}{4}+t^2\right)^2}dt
	 \!=\! \frac{1}{2\pi} \!\int\limits_{0}^{\infty}\!\! P_{\!-\frac12+it}(\cosh(d(z,\tau)))\frac{t\tanh(\pi t)}{\left(\frac{1}{4}+t^2\right)^2}dt,
	\end{equation}
	where we use that $1+2u(z,\tau)= \cosh(d(z,\tau))$, see (1.3) from \cite{Iwa02}.
	It is left to evaluate the integral on the right-hand side. By 7.213 of \cite{GR07}, we have
$$
	\int\limits_{0}^{\infty} P_{-\frac12+it}(\cosh(b))\frac{t\tanh(\pi t)}{a^2+t^2}dt=Q_{a-\frac12}(\cosh(b)),
$$
	which is valid for Re$(a)>0$. In particular, for $a\in\R^+$, we obtain that
\begin{align*}
	\frac{1}{2a}\frac{\partial}{\partial a}\int\limits_{0}^{\infty} P_{-\frac12+it}(\cosh(b))\frac{t\tanh(\pi t)}{a^2+t^2}dt = -\int\limits_{0}^{\infty} P_{-\frac12+it}(\cosh(b))\frac{t\tanh(\pi t)}{\left(a^2+t^2\right)^2}dt =\frac{1}{2a}\frac{\partial}{\partial a}Q_{a-\frac12}(\cosh(b)).
\end{align*}
Therefore, we have
	$$
	\int\limits_{0}^{\infty} P_{-\frac12+it}(\cosh(d_{\hyp}(z,\tau)))\frac{t\tanh(\pi t)}{\left(\frac{1}{4}+t^2\right)^2}dt=-\frac{1}{2a}\left[\frac{\partial}{\partial a}Q_{a-\frac12}(\cosh (d_{\hyp}(z,\tau)))\right]_{a=\frac12}.
	$$
	From \eqref{eq. widetilde k} we then get that
	$$
	{k^*}(u(z,\tau))=-\frac{1}{2\pi}\left[\frac{\partial}{\partial \nu}Q_\nu(1+2u(z,\tau))\right]_{\nu=0}.
	$$
Equation (3.8a) of \cite{Sm17} states that, for $w\in\mathbb{C}\setminus[-1,1]$,
	\begin{align*}
	\left[\frac{\partial}{\partial \nu}Q_\nu(w)\right]_{\nu=0}= -\mathrm{Li}_2\left(\frac{1-w}{2}\right) -\frac12\log\left(\frac{w+1}{2}\right)\log\left(\frac{w-1}{2}\right) -\frac{\pi^2}{6}.
	\end{align*}
If take $w=1+2u(z,\tau)>1$, we arrive at the expression
	$$
	{k^*}(u(z,\tau))=\frac{1}{2\pi}\left(\mathrm{Li}_2(-u)+\frac12\log(1+u)\log(u) +\frac{\pi^2}{6}\right).
	$$
	By Lemma \ref{lem. dilog}, ${k^*}(u(z,\tau))$ equals the expression in \Cref{thm: main 1}, completing the proof.\qedhere
\end{proof}

\end{document}